\newtheorem{thm}{Theorem}[section]
\newtheorem{lem}[thm]{Lemma}
\newtheorem{cor}[thm]{Corollary}
\newtheorem{prop}[thm]{Proposition}
\theoremstyle{definition}
\theoremstyle{remark}
\newtheorem{rem}[thm]{Remark}
\numberwithin{equation}{section}
\begin{document}

\newcommand{\thmref}[1]{Theorem~\ref{#1}}
\newcommand{\secref}[1]{Section~\ref{#1}}
\newcommand{\lemref}[1]{Lemma~\ref{#1}}
\newcommand{\propref}[1]{Proposition~\ref{#1}}
\newcommand{\corref}[1]{Corollary~\ref{#1}}
\newcommand{\remref}[1]{Remark~\ref{#1}}
\newcommand{\eqnref}[1]{(\ref{#1})}
\newcommand{\exref}[1]{Example~\ref{#1}}

\newcommand{\nc}{\newcommand}
\nc{\Z}{{\mathbb Z}}
\nc{\hZ}{{\hf+\mathbb Z}}
\nc{\C}{{\mathbb C}}
\nc{\N}{{\mathbb N}}
\nc{\F}{{\mf F}}
\nc{\Q}{\ol{Q}}
\nc{\la}{\lambda}
\nc{\ep}{\varepsilon}
\nc{\h}{\mathfrak h}
\nc{\n}{\mf n}
\nc{\A}{{\mf a}}
\nc{\G}{{\mathfrak g}}
\nc{\SG}{\overline{\mathfrak g}}
\nc{\DG}{\widetilde{\mathfrak g}}
\nc{\D}{\mc D} \nc{\Li}{{\mc L}} \nc{\La}{\Lambda} \nc{\is}{{\mathbf
i}} \nc{\V}{\mf V} \nc{\bi}{\bibitem} \nc{\NS}{\mf N}
\nc{\dt}{\mathord{\hbox{${\frac{d}{d t}}$}}} \nc{\E}{\mc E}
\nc{\ba}{\tilde{\pa}} \nc{\half}{\frac{1}{2}} \nc{\mc}{\mathcal}
\nc{\mf}{\mathfrak} \nc{\hf}{\frac{1}{2}}
\nc{\hgl}{\widehat{\mathfrak{gl}}} \nc{\gl}{{\mathfrak{gl}}}
\nc{\hz}{\hf+\Z}
\nc{\dinfty}{{\infty\vert\infty}} \nc{\SLa}{\overline{\Lambda}}
\nc{\SF}{\overline{\mathfrak F}} \nc{\SP}{\overline{\mathcal P}}
\nc{\U}{\mathfrak u} \nc{\SU}{\overline{\mathfrak u}}
\nc{\ov}{\overline}
\nc{\wt}{\widetilde}
\nc{\wh}{\widehat}
\nc{\sL}{\ov{\mf{l}}}
\nc{\sP}{\ov{\mf{p}}}
\nc{\osp}{\mf{osp}}
\nc{\spo}{\mf{spo}}
\nc{\hosp}{\widehat{\mf{osp}}}
\nc{\hspo}{\widehat{\mf{spo}}}
\nc{\hh}{\widehat{\mf{h}}}
\nc{\even}{{\bar 0}}
\nc{\odd}{{\bar 1}}
\nc{\mscr}{\mathscr}

\newcommand{\blue}[1]{{\color{blue}#1}}
\newcommand{\red}[1]{{\color{red}#1}}
\newcommand{\green}[1]{{\color{green}#1}}
\newcommand{\white}[1]{{\color{white}#1}}

 \advance\headheight by 2pt

\title[Finite-dimensional representations of $\mf{q}(n)$ of half-integer weights]
{Finite-dimensional half-integer weight modules over queer Lie superalgebras}

\author[Cheng]{Shun-Jen Cheng$^\dagger$}
\thanks{$^\dagger$Partially supported by a MOST and an Academia Sinica Investigator grant}
\address{Institute of Mathematics, Academia Sinica, Taipei,
Taiwan 10617} \email{chengsj@math.sinica.edu.tw}

\author[Kwon]{Jae-Hoon Kwon$^{\dagger\dagger}$}
\thanks{$^{\dagger\dagger}$Partially supported by a NRF-grant 2011-0006735.}
\address{Department of Mathematics, Sungkyunkwan University
2066 Seobu-ro, Jangan-gu, Suwon, Korea}
\email{jaehoonkw@skku.edu}

\begin{abstract}
We give a new interpretation of representation theory of the finite-dimensional half-integer weight modules over the queer Lie superalgebra $\mf{q}(n)$. It is given in terms of Brundan's work of finite-dimensional integer weight $\mf{q}(n)$-modules by means of Lusztig's canonical basis. Using this viewpoint we compute the characters of the finite-dimensional half-integer weight irreducible modules. For a large class of irreducible modules whose highest weights are of special types (i.e., totally connected or totally disconnected) we derive closed-form character formulas that are reminiscent of Kac-Wakimoto character formula for classical Lie superalgebras.
\end{abstract}

\subjclass[2010]{17B67}

\maketitle

\section{Introduction}
The character problem for finite-dimensional irreducible modules over the basic classical Lie superalgebras has now been settled (see, e.g., \cite{Br1,CLW,CL,Ger, GS,Mar,Ser,Sv1, SZ1, SZ2, vdJ}). For the queer Lie superalgebra $\mf{q}(n)$, the solution of this problem was first given by Penkov and Serganova \cite{PS1,PS2}. 
Indeed, given dominant weights $\la, \mu$ for $\mf{q}(n)$, they provide an algorithm for computing the multiplicity of an irreducible $\mf{q}(n)$-module $L(\mu)$ with highest weight $\mu$ in the cohomology groups of certain bundle on $\Pi$-symmetric projective space associated to $\la$, and, as a consequence, they determine the coefficient $a_{\la\mu}$ of the character of $L(\mu)$ in the expansion of the character of the associated Euler characteristic $E(\la)$.

A completely different approach to computing the transition matrix $(a_{\la\mu})$ between the characters of $E(\la)$ and $L(\mu)$ in the case when $\la, \mu$ are integer dominant weights was given by Brundan \cite{Br2}. Indeed, let $U_q(\mf b_\infty)$ be the quantum group of type $B$ with infinite rank, and let $\mscr F^n$ the $n$th exterior power of its natural representation $\mscr V$ (cf.~\cite{JMO}). It was shown that $(a_{\la\mu})$ is given by the transpose of the transition matrix between the canonical basis and the natural monomial basis of $\mscr F^n$ at $q=1$. This, together with an explicit combinatorial algorithm for computing the canonical basis elements, gives a solution of the irreducible character problem in the case of integer weight modules. In the equivalent dual picture the Grothendieck group of the category of finite-dimensional integer weight modules can be realized  by a $U_q(\mf{b}_\infty)$-module $\mscr E^n$, which is dual to $\mscr F^n$ with respect to a bilinear form on $\mscr V^{\otimes n}$ relating the canonical and dual canonical basis on the ambient space. In this dual picture $E(\la)$ is mapped to a standard basis element $E_\la$, which in turn is dual to the standard monomial basis of $\mscr F^n$, and $L(\mu)$ is mapped to a dual canonical basis element $L_\la$ at $q=1$. Further discussion and description for the inverse of $(a_{\la\mu})$ are given in \cite{SZ3}

In this paper we consider the finite-dimensional half-integer weight irreducible $\mf q(n)$-modules and their characters. We show that they also can be explained nicely in terms of the Brundan's results on integer weight modules. To be more precise, we take a quotient space $\mscr E^{n, \times}$ of $\mscr E^n$ by a subspace spanned by standard basis elements $E_\la$, where $\la$ has at least one zero part considered as a  generalized partition. Then we show that the transition matrix $(a_{\la\mu})$ for half-integer dominant weights $\la$ and $\mu$ is the same as the transition matrix $(a_{\la^\natural\mu^\natural})$ between the two bases of $\mscr E^{n,\times}$ given by $\{{\bf E}_{\la^\natural}\}$ and $\{{\bf L}_{\mu^\natural}\}$ respectively (Theorem \ref{thm:main}), where $\natural$ is a natural bijection from the set of half-integer dominant weights to the set of integer dominant weights with no zero parts (see \eqref{sharp:map}). Here, the notations ${\bf E}_{\la^\natural}$ and ${\bf L}_{\mu^\natural}$ stand for the images of ${E}_{\la^\natural}$ and ${L}_{\mu^\natural}$ under the canonical projection from $\mscr E^n$ onto $\mscr E^{n,\times}$. We remark that $\mscr E^{n,\times}$ also has a well-defined bar involution induced from that of $\mscr E^n$, and $\{{\bf L}_{\mu^\natural}\}$ forms a unique bar-invariant basis of $\mscr E^{n,\times}$ satisfying the properties of the dual canonical basis, although $\mscr E^{n,\times}$ is certainly not a $U_q(\mf{b}_\infty)$-module.

In particular, we have (up to some shift of weights) the same combinatorial formula for $a_{\la\mu}$ for half-integer dominant weights as in \cite{Br2}. Furthermore, we obtain a nice closed-form formula for the inverse matrix of $(a_{\la\mu})$ and hence for the character formula for $L(\la)$ as well (Theorem \ref{thm:char for half}), which turns out to be essentially the same as the formula  for general linear Lie superalgebras given in \cite{Br1}. This is proved by establishing a correspondence from half-integer dominant weights for $\mf q(n)$ to integer dominant weights for for the general linear Lie superalgebra $\gl(p|q)$ for some $p, q$ with $p+q=n$ preserving the Bruhat orders on each weight lattice. As an interesting application, we obtain Kac-Wakimoto type character formulas for $L(\la)$ when $\la$ is either a totally connected or a totally disconnected half-integer dominant weight (Theorem \ref{thm:KW formula}).

We conclude this introduction with the organization of the paper. In Section 2, we briefly recall some preliminary background and notations. In Section 3, we compare the Bruhat order in \cite{PS2} with the ones in \cite{Br1,Br2}. In Section 4, we present the Euler characteristic in a purely algebraic way using the dual Zuckerman functor and establish some of its basic properties. In Section 5, we prove our main observation on $a_{\la\mu}$ for half-integer dominant weights by analyzing the algorithm of Penkov and Serganova. In Section 6, we realize the Grothendieck group for the finite-dimensional half-integer weight modules, and compute the irreducible characters. Finally, as an application, Kac-Wakimoto type character formulas \cite{KW} are given in Section \ref{sec:KW:form} for finite-dimensional irreducible half-integer modules whose highest weights are either totally connected or totally disconnected.  Note that, just as in the case of classical Lie superalgebras, our formulas can then be regarded as generalizations of the Bernstein-Leites character formula \cite{BL}.

\vspace{.2cm} \noindent {\bf Acknowledgment.}
We thank Dimitar Grantcharov for raising with the first author the question about possible Kac-Wakimoto type character formula for the queer Lie superalgebra, which was one of the original motivation for investigation in this paper. The second author thanks Academia Sinica in Taipei for hospitality and support, where part of this work was carried out.

\vspace{.2cm} \noindent {\bf Notation.} Let $\Z$, $\N$, and $\Z_+$ stand for the sets of all, positive, and
non-negative integers, respectively. All vector spaces, algebras,
etc., are over the complex field $\C$.

\section{Preliminaries}\label{sec:prel}

Fix a positive integer $n\ge 1$. Let $\C^{n|n}$ be the complex superspace of dimension $(n|n)$. Choose an ordered basis $\{v_{\ov{1}},\ldots,v_{\ov{n}}\}$ for the even subspace $\C^{n|0}$ and an ordered basis $\{v_1,\ldots,v_n\}$ for the odd subspace $\C^{0|n}$ so that the general linear Lie superalgebra $\gl(n|n)$ may be realized as $2n \times 2n$ complex matrices indexed by $I(n|n):=\{\,\ov{1}<\ldots<\ov{n}<1<\ldots<n\,\}$.
The subspace
\begin{equation}\label{eq:q(n)}
{\mf q}(n):=\left\{\,
\begin{pmatrix}
A & B \\
B & A
\end{pmatrix}
\,\Big\vert\,\text{$A$, $B$ : $n \times n$ matrices}\,\right\}
\end{equation}
forms a subalgebra of $\gl(n|n)$ called the {\em queer Lie superalgebra}.

Let $\G$ stand for the Lie superalgebra ${\mf q}(n)$ from now on.
We denote by $E_{ab}$ the elementary matrix in $\gl(n|n)$ with $(a,b)$-entry $1$ and other entries $0$, for $a,b\in I(n|n)$.
We have a linear basis $\{\,e_{ij},\ov{e}_{ij}\,|\,1\leq i,j\leq n\,\}$ of $\G$, where $e_{ij}=E_{\ov{i}\ov{j}}+E_{ij}$ and $\ov{e}_{ij}=E_{\ov{i}j}+E_{i\ov{j}}$.  Note that $\G_{\even}$ is spanned by $\{\,e_{ij}\,|\,1\leq i,j\leq n\,\}$, which is isomorphic to the general linear Lie algebra $\gl(n)$.

Let $\h=\h_\even\oplus\h_\odd$ be the standard Cartan subalgebra of $\G$, which consists of matrices of the form  \eqref{eq:q(n)} with $A$ and $B$ diagonal. Then $\{\,h_i:=e_{ii}\,|\,\,1\leq i\leq n\,\}$ and $\{\,\ov{h}_i:=\ov{e}_{ii}\,|\,1\leq i\leq n\,\}$ form linear bases of $\h_\even$ and $\h_\odd$, respectively.
Let $\{\,\varepsilon_i\,|\,1\leq i\leq n\,\}$ be the basis of $\h^\ast_{\even}$ dual to $\{\,h_i\,|\,1\leq i\leq n\,\}$. We define a symmetric bilinear form $(\ ,\ )$ on $\h_\even^\ast$ by $(\varepsilon_i,\varepsilon_j)=\delta_{ij}$, for $1\leq i,j\leq n$.
Let $\mf b$ be the standard Borel subalgebra of $\G$, which consists of matrices of the form  \eqref{eq:q(n)} with $A$ and $B$ upper triangular. We have ${\mf b}=\h \oplus {\mf n}$, where ${\mf n}$ is the nilradical spanned by $\{\,e_{ij},\ov{e}_{ij}\,|\,1\leq i<j\leq n\,\}$. Also, denote by $\mf n_-$ the opposite nilradical so that $\G=\mf b\oplus \mf n_-$.
We denote by $\Phi^+$ and $\Phi^-$ the sets of positive and negative roots with respect to $\h_\even$, respectively, and denote by $\Pi$ the set of simple roots of $\G_\even$. We have
$\Phi^-=-\Phi^+$,
$\Pi=\{\,\varepsilon_i-\varepsilon_{i+1}\,|\,1\leq i\leq n-1\,\}$, and $\Phi^+=\Phi^+_\even \sqcup \Phi^+_\odd$,
where $\Phi^+_{\even}$ and $\Phi^+_{\odd}$ stand for the sets of positive even and odd roots, respectively. Ignoring parity we have $\Phi^+_\even=\Phi^+_\odd =\{\,\varepsilon_i-\varepsilon_j\,|\,1\leq i<j\leq n\,\}$.

Furthermore, we denote by $W$, the Weyl group of $\G$, which is the Weyl group of the reductive Lie algebra $\G_\even$ and hence acts naturally on $\h^*_{\even}$ by permutation. We also denote by $\ell(w)$ the length of an element $w\in W$.

For a $\G$-module $V$ and $\mu\in \h^\ast_\even$, let $V_\mu=\{\,v\in V\,|\,h\cdot v=\mu(h)v \text{ for $h\in\h_\even$}\, \}$ denote its $\mu$-weight space.  For a $\G$-module $V$ with weight space decomposition $V=\bigoplus_{\mu\in\h_\even^\ast}V_\mu$, its character is defined by
$\text{ch}V:=\sum_{\mu\in\h^*_\even}\dim V_\mu e^\mu$, where $e$ is an indeterminate.

Let $\la=\sum_{i=1}^n\la_i\varepsilon_i \in \h^*_\even$ be given. We put $\ell(\la)$ to be the number of $i$'s with $\la_i\neq 0$.
Consider a symmetric bilinear form on $\h_\odd$ given by $\langle \cdot, \cdot \rangle_\la = \la([\cdot, \cdot])$, and let $\h'_\odd$ be a maximal isotropic subspace associated to $\langle \cdot, \cdot \rangle_\la$. Put $\h'=\h_\even \oplus \h'_\odd$. Let $\C v_\la$ be the one-dimensional $\h'$-module with $h\cdot v_\la = \la(h) v_\la$ and $h'\cdot v_\la=0$, for $h\in \h_\even$ and $h'\in \h'_\odd$. Then $W_\la:={\rm Ind}_{\h'}^\h \C v_\la$ is an irreducible $\h$-module of dimension $2^{\lceil \ell(\la)/2\rceil}$, where $\lceil\cdot\rceil$ denotes the ceiling function. We put $\Delta(\la)={\rm Ind}_{\mf b}^{\G}W_\la$, where $W_\la$ is extended to a ${\mf b}$-module in a trivial way, and define $L(\la)$ to be the unique irreducible quotient of $\Delta(\la)$. Note that it is an $\h_\even$-semisimple highest weight $\G$-module of highest weight $\la$. The center of $\G$ was determined by Sergeev \cite{Sv}. We denote by $\chi_\la$ the central character of $L(\la)$ (see, e.g., \cite[Section 2.3]{CW} for more details). We say that a positive root $\varepsilon_i-\varepsilon_j$ ($i<j$) is atypical to $\la$ if $\la_i+\la_j= 0$. Recall that a weight $\la$ is said to be {\em atypical} if there is a positive root atypical to $\la$, and {\em typical} otherwise \cite{Pe}. The {\em degree of atypicality of $\la$} is the maximal number of positive even roots which are mutually orthogonal and atypical to $\la$.

For $\epsilon=0, \tfrac{1}{2}$, let $\La_{\epsilon+\Z}:=\sum_{i=1}^n(\epsilon+\Z) \varepsilon_i\subseteq\h^*_{\bar 0}$ and put $\La=\La_\Z \cup\La_{\hf+\Z}$. We say that a $\G$-module $V$ is an {\em integer weight module} and {\em half-integer weight module} if it has a weight space decomposition $V=\bigoplus_{\mu\in\La_{\epsilon+\Z}}V_\mu$ for $\epsilon=0$ and $\hf$, respectively.

 Let $\mc O_{\Z}$ and $\mc O_{\hZ}$ denote the BGG categories of $\h_\even$-semisimple integer weight and half-integer weight $\G$-modules, respectively. Let $\mc O^{\texttt{fin}}$ be the category of finite-dimensional $\G$-modules. Set
\begin{align*}
\mc O:=\mc O_\Z\cup\mc O_\hZ,\quad
\mc O^{\texttt{fin}}_{\epsilon+\Z}:=\mc O_{\epsilon+\Z}\cap \mc O^{\texttt{fin}},
\end{align*}
for $\epsilon=0, \tfrac{1}{2}$.
For a module category $\mc C$ given above, we denote by $K(\mc C)$ the corresponding Grothendieck group spanned by $[M]$ ($M\in \mc C$), where $[M]$ stands for the equivalence class of the module $M$.

Define
\begin{equation*}
\La^+_{\epsilon+\Z}:=\left\{\la=\sum_{i=1}^n\la_i\ep_i\in \La_{\epsilon+\Z}\,\Bigg\vert\,  \la_i\ge\la_{i+1},\, \la_i=\la_{i+1}\text{ implies }\la_i=0\text{ for all $i$}\,\right\},
\end{equation*}
for $\epsilon=0, \tfrac{1}{2}$, and put $\La^+=\La^+_\Z\cup \La^+_\hZ$.
We call a weight $\nu$ in $\Lambda^+$, $\Lambda^+_{\Z}$ and $\La^+_{\hf+\Z}$ {\em dominant},  {\em integer dominant}, and {\em  half-integer dominant}, respectively.
According to \cite[Theorem 4]{Pe} we have $L(\la)\in \mc{O}^{\texttt{fin}}_{\epsilon+\Z}$ if and only if $\la\in \La_{\epsilon+\Z}^+$ (see also \cite[Theorem 2.18]{CW}).
We also let
\begin{equation*}
\La_{\Z^\times}:=\left\{\la\in\La_\Z\,|\,\la_i\not=0\ \text{for all $i$}\,\right\}, \quad \La^{+}_{\Z^\times}:=\La^+\cap\La_{\Z^\times},
\end{equation*}
and let   a bijection $\sharp:\La_\hZ\rightarrow \La_{\Z^\times}$ be given by
\begin{align}\label{sharp:map}
\la=\sum_{i=1}^n\la_i\ep_i\ {\longmapsto} \ \la^\sharp:=\sum_{i=1}^n\left(\la_i+{\rm sgn}(\la_i)\hf\right)\ep_i.
\end{align}

\section{Bruhat orderings}

Let $\La^{\geqslant}=\{\,\la=\sum_{i=1}^n\la_i\varepsilon_i\in\La\,|\,\la_i\ge\la_{i+1} \text{ for all $i$}\,\}$ be the set of $\G_{\bar 0}$-dominant weights. We have $\La^+\subset\La^{\geqslant}$. Put $\La^\geqslant_{\epsilon+\Z}=\La^\geqslant\cap \La_{\epsilon+\Z}$ for $\epsilon=0,\hf$, and $\La^\geqslant_{\Z^\times}=\La^\geqslant\cap\La_{\Z^\times}$.
For $\la, \mu\in \La^\geqslant$, we define the {\em Bruhat ordering} $\succcurlyeq$ as in \cite[Lemma 2.1]{PS2}: $\la\succcurlyeq \mu$ if and only if there exists a sequence of elements $\mu=\nu_{(1)},\ldots,\nu_{(k)}=\la$ in $\La^{\geqslant}$ and roots $\beta_i\in\Phi^+$ such that $\nu_{(i)}+\beta_i=\nu_{(i+1)}$ with $(\nu_{(i)},\beta_i)=0$ for $1\leq i\leq k-1$.

Let us say that a subset $A$ of a partially ordered set $(S,\geq)$ is increasing if we have $s\in A$ for any $a\in A$ and $s\in S$ with $s\geq a$.

\begin{lem}\label{lem:partial order non-negativity}
The set $\La^\geqslant_{\Z^\times}$ is an increasing subset of $(\La^\geqslant,\succcurlyeq)$.
\end{lem}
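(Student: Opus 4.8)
The plan is to reduce to a single covering step of the Bruhat ordering and then to play atypicality against $\G_{\bar 0}$-dominance. First I would unwind the definition of $\succcurlyeq$: given $\mu \in \La^\geqslant_{\Z^\times}$ and $\la \in \La^\geqslant$ with $\la \succcurlyeq \mu$, there is a chain $\mu = \nu_{(1)}, \dots, \nu_{(k)} = \la$ in $\La^\geqslant$ with $\nu_{(i+1)} = \nu_{(i)} + \beta_i$, $\beta_i \in \Phi^+$ and $(\nu_{(i)}, \beta_i) = 0$. Since $\Phi^+ \subseteq \La_\Z$, every $\nu_{(i)}$ automatically lies in $\La_\Z$, so the only property one must propagate along the chain is that no coordinate vanishes. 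Thus the statement will follow by induction on $k$, once I establish the single-step claim: if $\nu \in \La^\geqslant_{\Z^\times}$, $\beta \in \Phi^+$ with $(\nu,\beta) = 0$, and $\nu + \beta \in \La^\geqslant$, then $\nu + \beta \in \La^\geqslant_{\Z^\times}$ (the induction being run ``upward'', from $\mu$ toward $\la$).

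For the single step I would write $\beta = \ep_p - \ep_q$ with $1 \le p < q \le n$; then the orthogonality condition $(\nu, \beta) = 0$ in the Bruhat ordering is precisely the condition that $\beta$ be atypical to $\nu$, i.e. $\nu_p + \nu_q = 0$. The crux is the following observation: since $\nu \in \La^\geqslant$ forces $\nu_p \ge \nu_q$, the relation $\nu_p + \nu_q = 0$ pins down the sign pattern $\nu_p \ge 0 \ge \nu_q$, and then $\nu \in \La_{\Z^\times}$ upgrades it to $\nu_p \ge 1$ and $\nu_q \le -1$. Hence $\nu + \beta = \nu + \ep_p - \ep_q$ has $p$-th coordinate $\nu_p + 1 \ge 2$, $q$-th coordinate $\nu_q - 1 \le -2$, and all other coordinates equal to those of $\nu$, which are nonzero by hypothesis; so every coordinate of $\nu + \beta$ is nonzero. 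As $\nu + \beta \in \La^\geqslant$ is part of the data of a Bruhat step, this gives $\nu + \beta \in \La^\geqslant_{\Z^\times}$, which closes the induction.

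I do not expect a genuine obstacle: once the two moving coordinates are isolated, the argument is essentially one line. The only points that need attention are bookkeeping ones — fixing the direction of $\succcurlyeq$ so that the induction adds the roots $\beta_i$ rather than subtracting them, and recognizing that a Bruhat step, being indexed by a root \emph{atypical} to the current weight, replaces the pair $\{\nu_p, \nu_q\} = \{c, -c\}$ (with $c \ge 1$) by $\{c+1, -(c+1)\}$, i.e.\ moves these coordinates strictly \emph{away} from $0$. Thus the property ``all coordinates nonzero'' can only be gained, never lost, on passing to a larger weight, which is exactly the assertion that $\La^\geqslant_{\Z^\times}$ is increasing.
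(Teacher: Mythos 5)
Your proof is correct and is essentially the paper's own argument: both reduce to a single Bruhat step $\nu\mapsto\nu+\ep_p-\ep_q$ with $\nu_p+\nu_q=0$, then use $\G_{\bar 0}$-dominance together with nonvanishing of coordinates to get $\nu_p>0>\nu_q$, so the two moving coordinates are pushed strictly away from $0$ while all others are unchanged. The extra bookkeeping you flag (direction of the induction, integrality of the chain) is harmless and correctly handled.
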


\begin{proof}
Let $\la,\mu\in\La^\geqslant$ such that $\la\succcurlyeq\mu$. We will show that if $\mu\in \La^\geqslant_{\Z^\times}$, then $\la\in \La^\geqslant_{\Z^\times}$.
It is enough to consider the case $\la-\mu=\ep_i-\ep_j$ with $i<j$ and $(\mu,\ep_i-\ep_j)=0$. In this case we have $\mu_i>0$ and $\mu_j=-\mu_i<0$. Thus $\la_i=\mu_i+1>0$ and $\la_j=\mu_j-1<0$. As $\la_s=\mu_s$, for all $s\not=i,j$, the lemma follows.
\end{proof}


\begin{lem}\label{lem:partial order}
For $\la, \mu\in \La^\geqslant_{\hf+\Z}$, we have $\la\succcurlyeq \mu$ if and only if $\la^\sharp\succcurlyeq \mu^\sharp$.
\end{lem}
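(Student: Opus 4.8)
The plan is to reduce the statement to the elementary one-step case used to define $\succcurlyeq$ and to check that the map $\sharp$ is a bijection of the relevant posets that respects single Bruhat moves in both directions. Recall that $\la\succcurlyeq\mu$ means there is a chain $\mu=\nu_{(1)},\dots,\nu_{(k)}=\la$ in $\La^\geqslant$ with $\nu_{(i+1)}=\nu_{(i)}+\beta_i$, $\beta_i\in\Phi^+$, and $(\nu_{(i)},\beta_i)=0$. So it suffices to prove: (i) if $\la,\mu\in\La^\geqslant_{\hZ}$ and $\la=\mu+\beta$ with $\beta=\ep_i-\ep_j$ ($i<j$) and $(\mu,\beta)=0$, then $\la^\sharp\succcurlyeq\mu^\sharp$; and (ii) conversely, if $\la,\mu\in\La^\geqslant_{\hZ}$ and $\la^\sharp\succcurlyeq\mu^\sharp$, then $\la\succcurlyeq\mu$. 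For (i), the condition $(\mu,\ep_i-\ep_j)=0$ forces $\mu_i=-\mu_j$; since $\mu\in\La_{\hZ}$ both are nonzero, so $\mu_i>0>\mu_j$, and $\la_i=\mu_i+1$, $\la_j=\mu_j-1$. Applying $\sharp$: $\mu^\sharp_i=\mu_i+\hf$, $\mu^\sharp_j=\mu_j-\hf=-\mu^\sharp_i$, so $(\mu^\sharp,\ep_i-\ep_j)=0$; and $\la^\sharp_i=\la_i+\hf=\mu^\sharp_i+1$, $\la^\sharp_j=\la_j-\hf=\mu^\sharp_j-1$, while all other coordinates are unchanged under $\sharp$. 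Hence $\la^\sharp=\mu^\sharp+(\ep_i-\ep_j)$ with $(\mu^\sharp,\ep_i-\ep_j)=0$, and one only needs $\mu^\sharp,\la^\sharp\in\La^\geqslant$ (dominance is preserved because $\sharp$ is order-preserving on each sign-class of coordinates and the only coordinates that move are $i$, which goes up, and $j$, which goes down, exactly as in the original move) — so $\la^\sharp\succcurlyeq\mu^\sharp$ by one elementary step. Iterating over the chain gives the forward direction.

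For the converse (ii), the natural approach is to run the same argument backwards, but the subtlety is that an intermediate weight $\nu$ in a chain from $\mu^\sharp$ to $\la^\sharp$ inside $\La^\geqslant_{\Z^\times}$ need not obviously lie in the image of $\sharp$, i.e.\ need not have all coordinates of absolute value in $\hZ$ — a priori its coordinates are integers. I would handle this by observing that, by \lemref{lem:partial order non-negativity}, $\La^\geqslant_{\Z^\times}$ is increasing in $(\La^\geqslant,\succcurlyeq)$; since $\mu^\sharp\in\La^\geqslant_{\Z^\times}$, every $\nu_{(i)}$ in a chain from $\mu^\sharp$ up to $\la^\sharp$ automatically lies in $\La^\geqslant_{\Z^\times}$ (all coordinates nonzero). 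Then for each elementary step $\nu_{(i+1)}=\nu_{(i)}+(\ep_a-\ep_b)$ with $(\nu_{(i)},\ep_a-\ep_b)=0$, the vanishing condition again forces $(\nu_{(i)})_a=-(\nu_{(i)})_b\ne 0$, so $(\nu_{(i)})_a>0>(\nu_{(i)})_b$. Define $\nu_{(i)}^\flat$ by subtracting $\mathrm{sgn}$ times $\hf$ in each coordinate (the inverse of $\sharp$ on $\La_{\Z^\times}$, extended coordinatewise by the same formula to all of $\La^\geqslant_{\Z^\times}$); the same computation as above shows $\nu_{(i+1)}^\flat=\nu_{(i)}^\flat+(\ep_a-\ep_b)$ with $(\nu_{(i)}^\flat,\ep_a-\ep_b)=0$ and that $\flat$ preserves $\La^\geqslant$. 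This produces a chain from $\mu^\sharp{}^\flat=\mu$ to $\la^\sharp{}^\flat=\la$ in $\La^\geqslant_\hZ$, giving $\la\succcurlyeq\mu$.

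The one genuine point to nail down — and the place I expect the most care — is the claim that $\sharp$ and its coordinatewise inverse $\flat$ map $\La^\geqslant$ into $\La^\geqslant$ \emph{and} that they do so compatibly with exactly the coordinates that change in an elementary move. The first half is clear when no coordinate changes sign, but one must notice that in every relevant elementary step the two moving coordinates have strictly opposite signs and stay that way after the shift (the positive one can only increase, the negative one decrease), so no sign reversal ever occurs along such a chain; this is precisely where the orthogonality condition $(\nu,\beta)=0$ and the $\La_{\hZ}$ (resp.\ $\La_{\Z^\times}$) hypothesis are used. Once this sign-stability is recorded, both directions are immediate from the definition of $\succcurlyeq$ as a composition of elementary orthogonal root additions, and the proof is complete.
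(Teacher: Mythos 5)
Your proposal is correct and follows essentially the same route as the paper: the forward direction is the same one-step computation showing that $\sharp$ carries an elementary orthogonal root addition to an elementary orthogonal root addition, and the converse uses \lemref{lem:partial order non-negativity} in exactly the way the paper does, to guarantee that every intermediate weight in a chain from $\mu^\sharp$ to $\la^\sharp$ lies in $\La^\geqslant_{\Z^\times}$ and hence is $\nu^\sharp$ for a unique $\nu\in\La^\geqslant_{\hf+\Z}$, after which the one-step argument reverses. The only cosmetic remark is that your symbol for the inverse of $\sharp$ clashes with the paper's later map $\flat$ of \eqref{eq:dominant map}, so it should be renamed.
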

\begin{proof}
Suppose that $\la\succcurlyeq \mu$.
Again it is enough to prove the case when $\la-\mu=\varepsilon_i-\varepsilon_j$ and $(\mu,\varepsilon_i-\varepsilon_j)=0$ for some $1\leq i<j\leq n$. We have $\la_i=\mu_i+1>\mu_i>0>\mu_j>\mu_j-1=\la_j$ with $\mu_i+\mu_j=0$.
Then it is clear that $\la^\sharp-\mu^\sharp=\varepsilon_i-\varepsilon_j$ and hence $\la^\sharp\succcurlyeq \mu^\sharp$.

Conversely, suppose that $\la^\sharp \succcurlyeq \mu^\sharp$. By Lemma \ref{lem:partial order non-negativity} we have $\eta\in \La^\geqslant_{\Z^\times}$ for any $\eta\in \La^\geqslant_\Z$ such that $\la^\sharp\succcurlyeq \eta\succcurlyeq \mu^\sharp$, that is, $\eta=\nu^\sharp$ for some (unique) $\nu\in \La^\geqslant_{\hf+\Z}$. Then it is clear that $\la\succcurlyeq \mu$ by the same argument as in the above paragraph.
\end{proof}

In \cite[Section 2.3]{Br2} a partial ordering $\succeq$ on $\La_\Z$ was defined based on the root lattice of the Lie algebra of type $\mf b_\infty$ (i.e., type $B$ of infinite rank), which we shall recall below. We will show that this ordering on $\La^{\geqslant}_\Z$ is equivalent to $\succcurlyeq$ in Proposition \ref{prop:Bruhat PS-B} below. So there will be no confusion to call both orderings Bruhat ordering in this paper.

Let $\texttt{P}$ be the free abelian group with orthonormal basis
$\{\,\delta_r\,\vert\, r\in\N\,\}$ with respect to a bilinear form
$(\cdot , \cdot)$. We define a partial ordering on $\texttt{P}$ by
declaring $\nu\ge\eta$, for $\nu,\eta\in \texttt{P}$, if $\nu-\eta$ is
a non-negative integral linear combination of the positive simple roots of type $\mf b_\infty$, that is,  $-\delta_1$ and
$\delta_r-\delta_{r+1}$ ($r \in \N$).

For $\la=\sum_{i=1}^n\la_i\varepsilon_i\in\La_\Z$, let
\begin{align*}
\text{wt}_s(\la)
 :=\sum_{s\le i\le n}\delta_{\la_i}\in \texttt{P},\quad
\text{wt}(\la):=\text{wt}_{1}(\la)\in \texttt{P},
\end{align*}
for $1\leq s\leq n$, where by definition we have $\delta_{-r}=-\delta_r$ for $r\in\N$ and $\delta_0=0$.
Then we define a partial order $\succeq$ on $\La_\Z$ as follows: $\la\succeq\mu$ if and only if
$\text{wt}(\la)=\text{wt}(\mu)$ and $\text{wt}_s(\la)\ge\text{wt}_s(\mu)$ for all $1\leq s\leq n$.

\begin{prop}\label{prop:Bruhat PS-B}
For $\la, \mu\in \La^\geqslant_\Z$, we have $\la\succcurlyeq \mu$ if and only if $\la\succeq \mu$.
\end{prop}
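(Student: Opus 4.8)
The plan is to show the two orderings agree by comparing how a single ``elementary'' Bruhat step in one ordering is realized in the other. Since both $\succcurlyeq$ and $\succeq$ are generated by their covering-type relations, it suffices to prove: (i) if $\la,\mu\in\La^\geqslant_\Z$ with $\la-\mu=\ep_i-\ep_j$ for some $i<j$ and $(\mu,\ep_i-\ep_j)=0$ (i.e.\ $\mu_i+\mu_j=0$), then $\la\succeq\mu$; and conversely (ii) if $\la\succeq\mu$, then one can factor the passage from $\mu$ to $\la$ through a chain of such elementary steps staying inside $\La^\geqslant_\Z$. For direction (i), note $\mu_i=-\mu_j$ forces (after adding $\ep_i-\ep_j$) that $\delta$-weight bookkeeping only changes in slots $i$ and $j$: $\wt_s(\la)-\wt_s(\mu)=\delta_{\mu_i+1}-\delta_{\mu_i}+\delta_{\mu_j-1}-\delta_{\mu_j}$ for $s\le i$, which collapses using $\delta_{-r}=-\delta_r$ and $\mu_j=-\mu_i$; one checks case by case (according to the sign of $\mu_i$, including the possibility $\mu_i=0$) that each such difference is a non-negative combination of $-\delta_1$ and the $\delta_r-\delta_{r+1}$, while $\wt(\la)=\wt(\mu)$ because the total sum is unchanged. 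Thus $\la\succeq\mu$.

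The substantive direction is (ii), the converse, and this is where I expect the main obstacle. Given $\la\succeq\mu$ in $\La^\geqslant_\Z$, I want to produce an intermediate $\G_{\bar 0}$-dominant weight $\nu$ with $\la\succcurlyeq\nu$ via a single atypical root addition (or, symmetrically, $\nu\succcurlyeq\mu$) and with $\la\succeq\nu\succeq\mu$, and then induct. Concretely: since $\wt(\la)=\wt(\mu)$ but $\la\neq\mu$, there is a largest index $j$ with $\la_j\neq\mu_j$; the condition $\wt_s(\la)\ge\wt_s(\mu)$ for all $s$ should be leveraged to locate an index $i<j$ and exhibit the ``missing'' root $\ep_i-\ep_j$ atypical to a suitably chosen $\nu$. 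The delicate point is ensuring $\G_{\bar 0}$-dominance is preserved at every step and that the atypicality condition $(\nu,\ep_i-\ep_j)=0$ can be arranged — this is exactly the kind of bookkeeping that Brundan's $\delta$-weight formalism is designed to make transparent, since adding $-\delta_1$ corresponds to an atypical move creating a $\pm1$ pair out of a $0$, and adding $\delta_r-\delta_{r+1}$ corresponds to shifting entries while respecting the cancellation $\delta_r+\delta_{-r}=0$. I would model the argument on the proof of the analogous statement in \cite{Br2} (or \cite{Br1}) relating the type-$B$ root-lattice order to the super-Bruhat order, adapting it to the $\mf q(n)$ root system where atypical positive even roots are precisely $\ep_i-\ep_j$ with $\la_i+\la_j=0$.

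An alternative, possibly cleaner route for (ii): translate $\succeq$ into a combinatorial statement about the multisets $\{\la_1,\ldots,\la_n\}$ versus $\{\mu_1,\ldots,\mu_n\}$ — the equality $\wt(\la)=\wt(\mu)$ says these multisets coincide after cancelling each $r$ against $-r$, and the inequalities $\wt_s(\la)\ge\wt_s(\mu)$ are a dominance-type condition on partial tail sums. One then shows any two $\G_{\bar 0}$-dominant rearrangements/perturbations satisfying this are connected by elementary atypical moves, reducing to a finite combinatorial lemma about tableaux or lattice paths. Either way, the forward direction is routine and the converse is the heart of the matter; I would expect the write-up to spend most of its length on carefully chosen $i,j,\nu$ in direction (ii).
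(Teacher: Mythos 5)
Your direction (i) is fine and is essentially the paper's argument: for an elementary step $\la=\mu+\ep_i-\ep_j$ with $\mu_i+\mu_j=0$, the difference ${\rm wt}_s(\la)-{\rm wt}_s(\mu)$ is $0$ for $s\le i$ and $s>j$, and equals $\delta_{a-1}-\delta_a$ (or $-\delta_1$ when $a=1$) for $i<s\le j$, where $a=\la_i$; hence $\la\succeq\mu$.

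Direction (ii), however, is where the entire content of the proposition lies, and you have only stated a plan, not a proof. The assertion that any $\la\succeq\mu$ in $\La^\geqslant_\Z$ ``can be factored through a chain of elementary atypical steps staying inside $\La^\geqslant_\Z$'' is exactly the statement to be proved, and neither of your two sketched routes identifies the intermediate weight or verifies the required inequalities. Concretely, what is missing is: (a) a quantity to induct on --- the paper uses $K(\la,\mu)=\sum_i(\la_i-\mu_i)_{i\le p}-\sum_j(\la_j-\mu_j)_{j>p}$ after splitting into nonnegative and nonpositive parts, and must first prove $K(\la,\mu)\ge 0$ with equality iff $\la=\mu$ (this already requires the tail-sum inequalities ${\rm wt}_s(\la)\ge{\rm wt}_s(\mu)$ plus \cite[Lemma 2.15]{Br2} to force $b_j\ge b'_j$ for every $j$ and $a_i=a'_i$ when the totals agree); (b) the explicit choice of the covering step --- when $b_q>b'_q$ one must argue that $a:=b_q$ also occurs among the $a_i$, pick the extremal positions $u,v$ with $a_u=a>a_{u+1}$ and $b_{v+1}<b_v=a$, and set $\gamma=\la-\ep_u+\ep_v$, which is $\G_{\bar 0}$-dominant and satisfies $(\gamma,\ep_u-\ep_v)=0$; and (c) the verification that $\gamma\succeq\mu$ still holds, which needs the coefficient estimate $c_{s,a-1}\ge q-v+1$ coming from $b'_j<b_q$ for all $j$ --- without this, subtracting $\delta_{a-1}-\delta_a$ from ${\rm wt}_s(\la)-{\rm wt}_s(\mu)$ could leave a negative coefficient and break $\succeq$. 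None of these steps is routine, and appealing to ``the analogous statement in Brundan'' does not discharge them, since Brundan's Lemma 2.15 characterizes $\succeq$ by moves in $\La_\Z$ that need not preserve $\G_{\bar 0}$-dominance, whereas $\succcurlyeq$ is defined by chains constrained to lie in $\La^\geqslant$. As it stands the proposal establishes only one implication.
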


\begin{proof}
Let $\la=\sum_{i=1}^n\la_i\varepsilon_i$ and $\mu=\sum_{i=1}^n\mu_i\varepsilon_i$.
Suppose that $\la\succcurlyeq \mu$. If is enough to show the case when $\la-\mu=\ep_i-\ep_j$ with $i<j$ and $(\mu,\ep_i-\ep_j)=0$. Let $a=\la_i$.
Then
\begin{equation*}\label{eq:difference of b-weight}
{\rm wt}_s(\la)-{\rm wt}_s(\mu)=
\begin{cases}
0, & \text{if $j<s\le n$},\\
\delta_{a-1}-\delta_a, & \text{if $i<s\leq j$ and $a>1$},\\
-\delta_{1}, & \text{if $i<s\leq j$ and $a=1$},\\
0, & \text{if $1\le s\le i$},\\
\end{cases}
\end{equation*}
which implies that $\la\succeq\mu$.

Next, suppose that $\la\succeq \mu$. Choose a $p$ such that $\la_i, \mu_i\geq 0$ for $1\leq i\leq p$, and $\la_j, \mu_j\leq 0$ for $p+1\leq j\leq n$. Let us identify $\la$, $\mu$ with sequences
\begin{equation}\label{eq:antidominant map}
\begin{split}
(\,a_1,\ldots,a_p\,|\,b_1,\ldots,b_q\,),\ \
(\,a'_1,\ldots,a'_p\,|\,b'_1,\ldots,b'_q\,),\\
\end{split}
\end{equation}
respectively, where $a_i=\la_i$, $a'_i=\mu_i$ for $1\leq i\leq p$, and $b_j=-\la_{p+j}$, $b'_j=-\mu_{p+j}$ for $1\leq j\leq q:=n-p$. Let $K(\la,\mu)=A-A'+B-B'$, where $A=\sum_ia_i$, $A'=\sum_{i}a'_i$, $B=\sum_jb_j$, and $B'=\sum_jb'_j$.

We see first that $A\geq A'$ by \cite[Lemma 2.15]{Br2}. We claim next that $b_j\geq b'_j$ for all $j$. It is clear that $b_q\geq b'_q$ since ${\rm wt}_q(\la)\geq {\rm wt}_q(\mu)$. Suppose that there exists a $k$ such that $b_j\geq b'_j$ for $k<j\leq q$ but $0\leq b_k<b'_k$. Let $a=b_k$ and $a'=b'_k$. Then we have
\begin{equation}\label{eq:diff negative}
{\rm wt}_{p+k}(\la)-{\rm wt}_{p+k}(\mu)= {\rm wt}_{p+k+1}(\la)-{\rm wt}_{p+k+1}(\mu)  - (\delta_{a} - \delta_{a'}),
\end{equation}
which contradicts the fact that ${\rm wt}_{p+k}(\la)\geq {\rm wt}_{p+k}(\mu)$ since ${\rm wt}_{p+k+1}(\la){-} {\rm wt}_{p+k+1}(\mu)$ is a non-negative integral combination of $\delta_r-\delta_{r+1}$ for $r\geq a'$. This proves the claim. In particular, we have $B\geq B'$, and hence $K(\la,\mu)\geq 0$.

Suppose that $K(\la,\mu)=0$, equivalently, $A=A'$ and $B=B'$. Then we have $b_j=b'_j$ for all $1\leq j\leq q$ since $b_j\geq b'_j$ for $1\leq j\leq q$. Suppose that $a_i\neq a'_i$ for some $1\leq i\leq p$. Suppose that there exists a $k$ such that $a_i\leq a'_i$ for $k<i\leq n$ and $a_k>a'_k$. Then
\begin{equation}\label{eq:diff negative-2}
{\rm wt}_{k}(\la)-{\rm wt}_{k}(\mu)= {\rm wt}_{k+1}(\la)-{\rm wt}_{k+1}(\mu)  - (\delta_{a'} - \delta_{a}),
\end{equation}
where $a=a_k$ and $a'=a'_k$. This yields the same contradiction as in \eqref{eq:diff negative}. So we must have $a'_i\geq a_i$ and hence $a'_i =a_i$ for $1\leq i\leq n$ since $A=A'$.

Now, we use induction on $K(\la,\mu)$ and $n$ to show that $\la\succcurlyeq\mu$. If $n=1$, then it is clear. Also, if $K(\la,\mu)=0$, then we have $\la=\mu$ by the argument in the previous paragraph. So we assume that $K(\la,\mu)>0$ and $n\geq 2$.

If $q=0$, then it is clear that $\la=\mu$.
If $q\geq 1$ and $b_q=b'_q$, then we may apply the induction hypothesis to the weights
\begin{equation*}
\begin{split}
(\,a_1,\ldots,a_p\,|\,b_1,\ldots,b_{q-1}\,),\ \
(\,a'_1,\ldots,a'_p\,|\,b'_1,\ldots,b'_{q-1}\,),\\
\end{split}
\end{equation*}
for $\mf q(n-1)$ to conclude that $\la \succcurlyeq \mu$.

Suppose that $q\geq 1$ and $b_q> b'_q$. Then $b'_j\neq b_q$ for all $1\leq j\leq q$, and hence $a_i=b_q$ for some $1\leq i\leq p$. Put $a=a_i=b_q$. Consider
\begin{equation*}
\gamma:=\la-\varepsilon_u+\varepsilon_v = (\,a_1,\ldots, a_u-1,\ldots ,a_p\,|\,b_1,\ldots,b_{v}-1,\ldots,b_q\,),
\end{equation*}
where $u$ and $v$ are such that $a_u=a>a_{u+1}$ and $b_{v+1}<b_v=a$.
It is clear that $\la\succcurlyeq\gamma$.
On the other hand, we have
\begin{equation*}
{\rm wt}_s(\gamma)-{\rm wt}_s(\mu)=
\begin{cases}
{\rm wt}_s(\la)-{\rm wt}_s(\mu), & \text{if $v<s\leq n$},\\
{\rm wt}_s(\la)-{\rm wt}_s(\mu) - \delta_{a-1}+\delta_a, & \text{if $u<s\leq v$ and $a>1$},\\
{\rm wt}_s(\la)-{\rm wt}_s(\mu) +\delta_{1}, & \text{if $u<s\leq v$ and $a=1$},\\
{\rm wt}_s(\la)-{\rm wt}_s(\mu), & \text{if $1\le s\le u$}.
\end{cases}
\end{equation*}
Assume that when $u< s \leq v$ and $a> 1$, we have ${\rm wt}_s(\la)-{\rm wt}_s(\mu)=c_{s,0}(-\delta_1)+\sum_{1\leq r\leq a-2}c_{s,r}(\delta_r-\delta_{r+1}) + c_{s,a-1}(\delta_{a-1}-\delta_a)$ for some $c_{s,r}\in \Z_{+}$ ($0\leq r\leq a-1$). Since $b'_j<b_q=a$ for all $j$, we have $c_{s,a-1}\geq q-v+1$, and so
\begin{equation*}
\begin{split}
&{\rm wt}_s(\la)-{\rm wt}_s(\mu) - \delta_{a-1} +\delta_a \\
&=c_{s,0}(-\delta_1)+\sum_{1\leq r\leq a-2}c_{s,r}(\delta_r-\delta_{r+1}) + (c_{s,a-1}-1)(\delta_{a-1}-\delta_a),
\end{split}
\end{equation*}
which implies that $\gamma\succeq\mu$. Similarly, when when $u< s \leq v$ and $a=1$, we have ${\rm wt}_s(\la)-{\rm wt}_s(\mu)=c_{s,0}(-\delta_1)$ for some $c_{s,0}\geq 1$, and hence ${\rm wt}_s(\la)-{\rm wt}_s(\mu) + \delta_{1}=(c_{s,0}-1)(-\delta_1)$, which also implies $\gamma\succeq \mu$.
Since $K(\gamma,\mu)<K(\la,\mu)$, we have $\gamma\succcurlyeq \mu$ by induction hypothesis. Therefore, $\la\succcurlyeq\mu$. The proof completes.
\end{proof}

\begin{cor}\label{cor:succcurlyeq = succeq}
For $\la, \mu\in \La^+_{\hf+\Z}$, we have $\la\succcurlyeq \mu$ if and only if $\la^\sharp\succeq \mu^\sharp$.
\end{cor}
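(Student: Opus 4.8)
The plan is to combine the two immediately preceding results. Corollary~\ref{cor:succcurlyeq = succeq} asserts that for $\la,\mu\in\La^+_{\hf+\Z}$ one has $\la\succcurlyeq\mu$ if and only if $\la^\sharp\succeq\mu^\sharp$. Since $\La^+_{\hf+\Z}\subset\La^\geqslant_{\hf+\Z}$ and, under $\sharp$, the image $\la^\sharp$ of a half-integer dominant weight lies in $\La^+_{\Z^\times}\subset\La^\geqslant_\Z$, both \lemref{lem:partial order} and \propref{prop:Bruhat PS-B} apply verbatim to the weights at hand. So the proof is essentially a one-line chaining of equivalences, and the only real content is checking that the hypotheses of the two lemmas are met.

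First I would fix $\la,\mu\in\La^+_{\hf+\Z}$ and observe that $\la,\mu\in\La^\geqslant_{\hf+\Z}$, so that \lemref{lem:partial order} gives
\begin{equation*}
\la\succcurlyeq\mu \iff \la^\sharp\succcurlyeq\mu^\sharp .
\end{equation*}
Next I would note that the map $\sharp$ sends $\La_\hZ$ into $\La_{\Z^\times}$ and preserves $\G_\even$-dominance (the argument in the proof of \lemref{lem:partial order} shows $\la^\sharp_i\ge\la^\sharp_{i+1}$ whenever $\la_i\ge\la_{i+1}$), hence $\la^\sharp,\mu^\sharp\in\La^\geqslant_{\Z^\times}\subset\La^\geqslant_\Z$. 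Therefore \propref{prop:Bruhat PS-B} applies to $\la^\sharp$ and $\mu^\sharp$ and yields
\begin{equation*}
\la^\sharp\succcurlyeq\mu^\sharp \iff \la^\sharp\succeq\mu^\sharp .
\end{equation*}
Combining the two displayed equivalences gives $\la\succcurlyeq\mu\iff\la^\sharp\succeq\mu^\sharp$, which is the assertion of the corollary.

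There is essentially no obstacle here; the statement is a formal consequence of \lemref{lem:partial order} and \propref{prop:Bruhat PS-B}, and the only point that warrants a sentence of justification is that $\La^+_{\hf+\Z}\subset\La^\geqslant_{\hf+\Z}$ and $\sharp(\La^+_{\hf+\Z})\subset\La^\geqslant_{\Z^\times}$, so that both prior results are legitimately invoked. I would keep the proof to two or three lines, simply writing ``By \lemref{lem:partial order}, $\la\succcurlyeq\mu$ if and only if $\la^\sharp\succcurlyeq\mu^\sharp$, and by \propref{prop:Bruhat PS-B} the latter is equivalent to $\la^\sharp\succeq\mu^\sharp$ since $\la^\sharp,\mu^\sharp\in\La^\geqslant_\Z$.''
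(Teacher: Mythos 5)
Your proof is correct and matches the paper's intent exactly: the corollary is stated without proof precisely because it is the immediate chaining of \lemref{lem:partial order} (which converts $\la\succcurlyeq\mu$ to $\la^\sharp\succcurlyeq\mu^\sharp$) with \propref{prop:Bruhat PS-B} (which converts $\succcurlyeq$ to $\succeq$ on $\La^\geqslant_\Z$). Your verification that $\la^\sharp,\mu^\sharp\in\La^\geqslant_{\Z^\times}\subset\La^\geqslant_\Z$ is the only hypothesis check needed, and it is done correctly.
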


The Bruhat order $\succeq $ on $\La_\Z$ is also related with the Bruhat order on the integral weight lattice for the general linear Lie superalgebra in \cite[Section 2-b]{Br1} as follows.
For $0\leq p\leq n$ and $q=n-p$, let $\Z^{p|q}$ be the set of integer-valued functions on $J(p|q)=\{\,-p<\cdots<-1<1<\cdots<q\,\}$. We shall also identify an element $f\in\Z^{p|q}$ with the
sequence of integers $$f=(f(-p),\ldots,f(-1)\,|\,f(1),\ldots, f(q)),$$ so that $\Z^{p|q}$ can be understood as the integral weight lattice for  $\mf{gl}(p|q)$.
Let $P$ be the free abelian group with orthonormal basis
$\{\,\gamma_r\,\vert\, r\in\Z\,\}$ with respect to a bilinear form
$(\cdot , \cdot)$. We define a partial order on $P$ by
declaring $\nu\ge\eta$, for $\nu,\eta\in P$, if $\nu-\eta$ is
a non-negative integral linear combination of the positive simple roots of $\mf{gl}_\infty$, that is, $\gamma_r-\gamma_{r+1}$ ($r \in \Z$).
For $f\in \Z^{p|q}$, let
\begin{align*}
\text{wt}_s(f)
 :=\sum_{s\le i\le n}{\rm sgn}(i)\gamma_{f(i)}\in P,\quad
\text{wt}(f):=\text{wt}_{-p}(f)\in P,
\end{align*}
for $s\in J(p|q)$.
Then for $f, g\in \Z^{p|q}$ we define $f\succeq_a g$ if and only if
$\text{wt}(f)=\text{wt}(g)$ and $\text{wt}_s(f)\ge\text{wt}_s(g)$ for all $s\in J(p|q)$.

Now, let
\begin{equation}\label{def:p:q}
\La^\geqslant_{\Z^\times}(p):=\left\{\,\la=\sum_{i=1}^n\la_i\varepsilon_i\in \La^\geqslant_{\Z^\times}\,\Bigg\vert\,\la_p>0>\la_{p+1}\,\right\}.
\end{equation}
We have $\La^\geqslant_{\Z^\times}=\bigsqcup_{p=0}^n \La^\geqslant_{\Z^\times}(p)$.
For $\la\in \La^\geqslant_{\Z^\times}(p)$, define
\begin{equation}\label{eq:dominant map}
\la^\flat := (-\la_1,\ldots,-\la_p\,|\,\la_{p+1},\ldots,\la_n)\in \Z^{p|q},
\end{equation}
(cf.~\eqref{eq:antidominant map}).
Let $\Z^{p|q}_{\geqslant}$ be the set of $f\in \Z^{p|q}$ such that $f(-p)\leq\cdots\leq f(-1)$ and $f(1)\geq\cdots\geq f(q)$.

\begin{lem}\label{lem:increasing property} The map $\flat$ is an injection and the
image of $\La^\geqslant_{\Z^\times}(p)$ under $\flat$ is an increasing subset of $(\Z^{p|q}_{\geqslant},\succeq_a)$ for $0\leq p\leq n$.
\end{lem}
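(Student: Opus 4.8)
The plan is to unwind both definitions and reduce everything to elementary comparisons of the sequences \eqref{eq:antidominant map} and \eqref{eq:dominant map}. First I would check injectivity of $\flat$, which is immediate: given $\la\in\La^\geqslant_{\Z^\times}(p)$, the integer $p$ is recovered from $\la^\flat$ as the number of entries in the left block, and $\la$ is recovered entry by entry from \eqref{eq:dominant map} (negate the left block, keep the right block). Next I would establish that $\la^\flat\in\Z^{p|q}_{\geqslant}$: since $\la\in\La^\geqslant$ we have $\la_1\ge\cdots\ge\la_p>0>\la_{p+1}\ge\cdots\ge\la_n$, so $-\la_1\le\cdots\le-\la_p$ and $\la_{p+1}\ge\cdots\ge\la_n$, which is exactly the defining condition of $\Z^{p|q}_{\geqslant}$.

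The heart of the argument is the increasing property. Suppose $f=\la^\flat$ with $\la\in\La^\geqslant_{\Z^\times}(p)$, and suppose $g\in\Z^{p|q}_{\geqslant}$ satisfies $g\succeq_a f$; I must produce $\mu\in\La^\geqslant_{\Z^\times}(p)$ with $\mu^\flat=g$, i.e. I must show $g(-i)<0$ for $1\le i\le p$ and $g(j)>0$ for $1\le j\le q$ (then $\mu$ defined by $\mu_i=-g(-i)$ for $i\le p$ and $\mu_{p+i}=g(i)$ for $i\le q$ lies in $\La^\geqslant_{\Z^\times}(p)$ and maps to $g$). The key observation is that, translated through $\flat$, the order $\succeq_a$ on the image of $\La^\geqslant_{\Z^\times}(p)$ matches the order $\succeq$ on $\La^\geqslant_{\Z^\times}$ under the identification $\la\leftrightarrow\la^\flat$; indeed comparing the definitions of $\text{wt}_s$ in the two settings, one has $\gamma_{-r}=-\gamma_r$ available on the $\mf{gl}_\infty$ side by the sign convention, so $\text{wt}_s(\la^\flat)$ for $s$ in the left block records $-\gamma_{-\la_i}=\gamma_{\la_i}$-type contributions, matching $\text{wt}_s(\la)$ on the $\mf b_\infty$ side after the natural relabeling $\delta_r\leftrightarrow\gamma_r$ for $r\ge1$ and $\delta_{-r}\leftrightarrow\gamma_{-r}$. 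Thus the statement essentially follows from a combination of the sign-positivity already used in \lemref{lem:partial order non-negativity}: an entry of $g$ cannot cross $0$ to the wrong side, because a move that would change the sign of a left-block entry from negative to nonnegative, or of a right-block entry from positive to nonpositive, is a move along a simple root $\gamma_r-\gamma_{r+1}$ with $r<0\le r+1$, i.e. involving $\gamma_0$; but $\gamma_0$ does not appear in any $\text{wt}_s$ of an element of the image (all $\la_i$ are nonzero), so such contributions cannot be cancelled and would violate $\text{wt}(g)=\text{wt}(f)$ together with $\text{wt}_s(g)\ge\text{wt}_s(f)$.

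Concretely I would argue as follows. From $g\succeq_a f$ we get $\text{wt}(g)=\text{wt}(f)$ and $\text{wt}_s(g)\ge\text{wt}_s(f)$ for all $s\in J(p|q)$. Reading off the $s=-p$ equality $\text{wt}(g)=\text{wt}(f)$ and then the successive inequalities from $s=q$ downward, one shows by the same telescoping/descent bookkeeping as in the proof of \propref{prop:Bruhat PS-B} (the displays \eqref{eq:diff negative} and \eqref{eq:diff negative-2}) that: each right-block entry satisfies $g(j)\ge f(j)>0$ hence $g(j)>0$; and summing, the total of the left block of $g$ is at least that of $f$, while a sign-change $g(-i)\ge0$ for some $i$ would force a $\gamma_0$ or a "wrong direction" contribution incompatible with $\text{wt}(g)=\text{wt}(f)$. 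I expect the main obstacle to be precisely this last point — carefully ruling out the sign flip $g(-i)\ge 0$ — because unlike the $\mf q(n)$ setting where a single root move is transparent, here $g$ is only constrained order-theoretically; the cleanest route is to invoke \cite[Lemma 2.15]{Br2} (as is done in \propref{prop:Bruhat PS-B}) for the left block to bound sums and signs, together with the fact that $0\notin\{f(i)\}$, and then the right block is handled by the downward induction already described. Everything else is routine unpacking.
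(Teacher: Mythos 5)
There is a genuine error at the heart of your argument: you have misread the definition \eqref{eq:dominant map}. For $\la\in\La^\geqslant_{\Z^\times}(p)$ we have $\la_1\ge\cdots\ge\la_p>0>\la_{p+1}\ge\cdots\ge\la_n$, and $\la^\flat=(-\la_1,\ldots,-\la_p\,|\,\la_{p+1},\ldots,\la_n)$ negates only the left block; the right-block entries $\la_{p+1},\ldots,\la_n$ are kept as they are and are already negative. Hence \emph{every} entry of $\la^\flat$ is strictly negative, and the image of $\La^\geqslant_{\Z^\times}(p)$ under $\flat$ is exactly $\{\,f\in\Z^{p|q}_{\geqslant}\mid f(i)<0 \text{ for all } i\in J(p|q)\,\}$. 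Your target ``$g(j)>0$ for $1\le j\le q$'' is therefore the wrong statement (it is even inconsistent with your own reconstruction $\mu_{p+j}=g(j)$, which must be negative for $\mu$ to lie in $\La^\geqslant_{\Z^\times}(p)$), and the inequality ``$g(j)\ge f(j)>0$'' you derive for the right block fails on both counts: $f(j)<0$, and entries \emph{decrease}, not increase, as one passes to the larger element in $\succeq_a$ (for instance $\text{wt}_q(g)\ge\text{wt}_q(f)$ forces $g(q)\le f(q)$, since $\gamma_a-\gamma_b\ge 0$ exactly when $a\le b$). Your $\gamma_0$-argument also has a loophole even for the statement you aim at: $\gamma_0$ can occur in $\text{wt}_s(g)$ with total coefficient zero, by cancellation between a zero entry in the left block and one in the right block, so its absence from $\text{wt}(f)$ does not by itself exclude zero entries of $g$.

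The repair is short and is essentially what the paper does by simply citing \cite[Lemma 2.5]{Br1}: that lemma says $g\succeq_a f$ holds if and only if $g$ is obtained from $f$ by a chain of elementary moves, each of which subtracts $d_a+d_b$ for some $a$ in the left block and $b$ in the right block (this is exactly how the same lemma is invoked in the proof of Proposition \ref{prop:succeq and succeq'}). Consequently $g(c)\le f(c)\le -1$ for every $c\in J(p|q)$, so all entries of $g$ are negative, and since $g\in\Z^{p|q}_{\geqslant}$ it lies in the image of $\flat$. Note that this entrywise monotonicity cannot be read off the raw definition of $\succeq_a$ at interior indices $s$ (a ``wrong-direction'' term $\gamma_{g(s)}-\gamma_{f(s)}$ can be compensated by $\text{wt}_{s+1}(g)-\text{wt}_{s+1}(f)$), so some form of the chain characterization, rather than the telescoping bookkeeping of \eqref{eq:diff negative}, really is needed; also, \cite[Lemma 2.15]{Br2} concerns the order $\succeq$ on $\La_\Z$ and is not directly applicable to $\succeq_a$ on $\Z^{p|q}$. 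Your injectivity and well-definedness arguments (that $\la^\flat\in\Z^{p|q}_{\geqslant}$) are fine, modulo the reversed indexing $\mu_i=-g(-i)$, which should read $\mu_i=-g(i-p-1)$.
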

\begin{proof}
It follows directly from \cite[Lemma 2.5]{Br1}.
\end{proof}

\begin{prop}\label{prop:succeq and succeq'}
For $\la, \mu\in \La^\geqslant_{\Z^\times}$, we have $\la\succeq\mu$ if and only if $\la^\flat \succeq_a \mu^\flat$.
\end{prop}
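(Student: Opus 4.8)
The plan is to reduce everything to the combinatorics of weight functions $\mathrm{wt}_s$ and then deploy the preceding results as black boxes. First I would fix $\la,\mu\in\La^\geqslant_{\Z^\times}$ and choose a common splitting index $p$ so that $\la_i,\mu_i>0$ for $i\le p$ and $\la_i,\mu_i<0$ for $i>p$; this is legitimate because membership in $\La_{\Z^\times}$ forbids zero parts, and because (by \lemref{lem:partial order non-negativity}) the order $\succeq$ restricted to $\La^\geqslant_{\Z^\times}$ cannot move a part across $0$, so the natural choice of $p$ for $\la$ and for $\mu$ can be taken to agree after possibly shrinking the positive block. With this fixed $p$, both $\la^\flat$ and $\mu^\flat$ live in $\Z^{p|q}_{\geqslant}$ and \lemref{lem:increasing property} guarantees $\flat$ is an injection there, so the statement is genuinely an equivalence of two partial orders on a single set.

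Next I would set up the dictionary between the two ambient lattices $\texttt{P}$ (basis $\delta_r$, $r\in\N$, with the type-$\mf b_\infty$ order) and $P$ (basis $\gamma_r$, $r\in\Z$, with the $\gl_\infty$ order), using $\delta_{-r}=-\delta_r$, $\delta_0=0$, and $\mathrm{sgn}(i)$-signs in the definition of $\mathrm{wt}_s(f)$. Concretely: for $\la\in\La^\geqslant_{\Z^\times}(p)$ and each $s$, I would compute $\mathrm{wt}_s(\la)=\sum_{s\le i\le p}\delta_{\la_i}+\sum_{p<i\le n}\delta_{\la_i}$, use $\delta_{\la_i}=-\delta_{-\la_i}$ for the negative block, and match this termwise with $\mathrm{wt}_{s'}(\la^\flat)=\sum_{s'\le i}\mathrm{sgn}(i)\gamma_{(\la^\flat)(i)}$ under the obvious index identification $i\le p \leftrightarrow -i\in J(p|q)$, $p<i\le n \leftrightarrow i-p\in J(p|q)$. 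The key bookkeeping point is that the ``totality'' condition $\mathrm{wt}(\la)=\mathrm{wt}(\mu)$ in $\texttt{P}$ must be shown equivalent to $\mathrm{wt}(\la^\flat)=\mathrm{wt}(\mu^\flat)$ in $P$; here one has to be a little careful because $\delta_r\mapsto\gamma_r$ for $r>0$ but the negative parts of $\la$ become $\gamma$'s with negative index via the $\mathrm{sgn}$ twist, so the positive-index and negative-index halves of $P$ are fed by disjoint blocks of the queer weight, and total-weight equality in $\texttt{P}$ splits into equality on each block, which is exactly equality in $P$. The order-comparison $\mathrm{wt}_s(\la)\ge\mathrm{wt}_s(\mu)$ translates analogously, once one notes that the type-$\mf b_\infty$ simple roots $-\delta_1$ and $\delta_r-\delta_{r+1}$ correspond under this dictionary to $\gl_\infty$ simple roots $\gamma_{-1}-\gamma_1$... here a subtlety appears, because $-\delta_1$ has no literal counterpart as a single $\gamma_r-\gamma_{r+1}$; it corresponds to the ``crossing'' step $\gamma_{-1}-\gamma_1$, i.e.\ the sum $(\gamma_{-1}-\gamma_0)+(\gamma_0-\gamma_1)$ with $\gamma_0$ cancelling.

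I expect the main obstacle to be precisely this $\delta_0/\gamma_0$ mismatch and the associated need to verify that positivity of coefficients is preserved in both directions. On the $\La$-side there is no $\delta_0$ at all (zero parts are excluded, and even if they appeared $\delta_0=0$); on the $\gl(p|q)$-side $\gamma_0$ is a genuine basis vector, and a difference $\la^\flat-\mu^\flat$ that is a non-negative combination of $\gamma_r-\gamma_{r+1}$ may use the steps through $r=0$. The argument I would give is: since $\la,\mu\in\La^\geqslant_{\Z^\times}(p)$ their $\flat$-images satisfy $(\la^\flat)(i)<0<(\la^\flat)(i')$ whenever $i<0<i'$ in $J(p|q)$ (negative block strictly negative, positive block strictly positive), so in any expression of $\mathrm{wt}_s(\la^\flat)-\mathrm{wt}_s(\mu^\flat)$ as a $\Z_+$-combination of $\gamma_r-\gamma_{r+1}$, the coefficient of the crossing pair $\gamma_{-1}-\gamma_0$ equals that of $\gamma_0-\gamma_1$ (because the net $\gamma_0$-coefficient must vanish—no part ever equals $0$), and this common value is exactly the coefficient that $-\delta_1$ contributes on the $\texttt{P}$-side. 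This lets me conclude that a $\mf b_\infty$-dominance relation upstairs is equivalent to a $\gl_\infty$-dominance relation downstairs, term by term in $s$. Combining the termwise equivalences for all $s$ (using the index correspondence between $1\le s\le n$ and $s\in J(p|q)$, run in the order dictated by the sign blocks) gives $\la\succeq\mu \iff \la^\flat\succeq_a\mu^\flat$. As a sanity check I would verify the claim on the two elementary moves: $\la-\mu=\ep_i-\ep_{i+1}$ with both parts positive maps to $\gamma_{a-1}-\gamma_a$ (matching \eqnref{eq:difference of b-weight}), the move with $a=1$ maps to the $-\delta_1\leftrightarrow(\gamma_{-1}-\gamma_0)+(\gamma_0-\gamma_1)$ phenomenon above, and the move inside the negative block maps via the $\mathrm{sgn}$ twist to a single $\gamma_{r}-\gamma_{r+1}$ with negative indices—so no routine calculation beyond these is really needed, the general case being assembled from \lemref{lem:partial order non-negativity}, \lemref{lem:increasing property}, and the reflexivity of the construction.
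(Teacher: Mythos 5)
Your overall strategy --- proving the equivalence directly from the ${\rm wt}_s$-definitions of the two orders via a linear dictionary between $\texttt{P}$ and $P$, rather than the paper's route of converting both orders into chains of elementary moves (\propref{prop:Bruhat PS-B} on one side, \cite[Lemma 2.5]{Br1} on the other) and transporting the chains with \lemref{lem:partial order non-negativity} and \lemref{lem:increasing property} --- is legitimate and could give a more self-contained argument. But your dictionary is wrong, and the error lands exactly on the step that needs care. The map $\flat$ \emph{negates} the positive block: a part $\la_i=a>0$ becomes the value $-a$ at a negative index of $J(p|q)$, where ${\rm sgn}=-1$, so it contributes $-\gamma_{-a}$ to ${\rm wt}_s(\la^\flat)$, while a part $\la_{p+j}=-b<0$ contributes $+\gamma_{-b}$. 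Hence \emph{both} blocks feed the negative-index half of $P$ (the correct dictionary is $\delta_b\mapsto-\gamma_{-b}$ for $b\in\N$); all entries of $\la^\flat$ are strictly negative, so your claim that the positive block of $\la^\flat$ is strictly positive is false; total-weight equality does not ``split into equality on each block''; and the crossing roots $\gamma_{-1}-\gamma_0$ and $\gamma_0-\gamma_1$ never occur with nonzero coefficient at all. In particular $-\delta_1$ does not correspond to $(\gamma_{-1}-\gamma_0)+(\gamma_0-\gamma_1)$: under the actual dictionary it goes to $\gamma_{-1}$, which is not even in the root lattice of $\gl_\infty$. Your sanity checks test the wrong moves as well: inside $\La^\geqslant_{\Z^\times}$ the only elementary moves are $\mu\mapsto\mu+\ep_i-\ep_j$ with $\mu_i=a>0=a+\mu_j$, producing ${\rm wt}_s$-differences $\delta_a-\delta_{a+1}$ with $a\ge 1$; the $-\delta_1$ case of \eqnref{eq:difference of b-weight} requires a zero part and never arises here, and there are no ``moves inside the negative block.''

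The gap is repairable, but by a different mechanism from the one you describe: one must show the generator $-\delta_1$ is never needed. Applying the augmentation $\sum_b c_b\delta_b\mapsto\sum_b c_b$ to ${\rm wt}(\la)={\rm wt}(\mu)$ forces $\la,\mu\in\La^\geqslant_{\Z^\times}(p)$ for the same $p$ (this, not \lemref{lem:partial order non-negativity}, is what legitimizes a common $p$ for the $\succeq$ direction); then every difference ${\rm wt}_s(\la)-{\rm wt}_s(\mu)$ has augmentation zero, so in any expression of it as a $\Z_+$-combination of $-\delta_1$ and the $\delta_r-\delta_{r+1}$ the coefficient of $-\delta_1$ must vanish. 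Only after this observation do the two positive cones correspond under the injective map $\delta_b\mapsto-\gamma_{-b}$, which carries $\delta_r-\delta_{r+1}$ to the $\gl_\infty$ simple root $\gamma_{-r-1}-\gamma_{-r}$, and only then does the termwise equivalence of ${\rm wt}_s(\la)\ge{\rm wt}_s(\mu)$ with ${\rm wt}_{s'}(\la^\flat)\ge{\rm wt}_{s'}(\mu^\flat)$ follow. As written, your argument asserts a coefficient identity between the two sides that the (incorrect) dictionary does not support, so the central claim that ``a $\mf b_\infty$-dominance relation upstairs is equivalent to a $\gl_\infty$-dominance relation downstairs'' is not established.
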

\begin{proof}
Suppose that $\la\succeq\mu$. By Lemma \ref{lem:partial order non-negativity} and Proposition \ref{prop:Bruhat PS-B}, there exists a sequence of elements $\mu=\nu_{(1)},\ldots,\nu_{(k)}=\la$ in $\La^{\geqslant}_{\Z^\times}$ and roots $\beta_i\in\Phi^+$ such that $\nu_{(i)}+\beta_i=\nu_{(i+1)}$ with $(\nu_{(i)},\beta_i)=0$ for $1\leq i\leq k-1$. Then we have $\la, \mu\in \La^\geqslant_{\Z^\times}(p)$ for some $p$, and
$\nu_{(i+1)}^\flat=\nu_{(i)}^\flat - d_{-u_i} - d_{v_i}$ for some $1\leq u_i\leq p$ and $1\leq v_i\leq q$, where $d_s\in \Z^{p|q}$ is given by $d_s(t)=\delta_{st}$ for $s,t\in J(p|q)$. (This implies $\nu_{(i+1)}^\flat \downarrow \nu_{(i)}^\flat$ following the notation in \cite[Section 2-b]{Br1}.) Thus we have $\la^\flat \succeq_a \mu^\flat$ by \cite[Lemma 2.5]{Br1}.

Conversely, suppose that $\la^\flat \succeq_a \mu^\flat$. Again by \cite[Lemma 2.5]{Br1} there exists a sequence of elements  $\mu^\flat =f_{(1)},\ldots, f_{(k)}=\la^\flat$  in $\Z^{p|q}$ such that $f_{(i+1)}\downarrow f_{(i)}$ for $1\leq i\leq k-1$. Since $\la^\flat, \mu^\flat\in \Z^{p|q}_{\geqslant}$, we can find a sequence of elements  $\mu^\flat = g_{(1)},\ldots, g_{(l)}=\la^\flat$ in $\Z^{p|q}_{\geqslant}$ such that
$g_{(i+1)}=g_{(i)} - d_{-u_i} - d_{v_i}$ for some $1\leq u_i\leq p$ and $1\leq v_i\leq q$. Moreover by Lemma \ref{lem:increasing property}, $g_{(i)}=\nu_{(i)}^\flat$ for some $\nu_{(i)}\in \La^{\geqslant}_{\Z^\times}$. This implies that $\nu_{(i)}+\beta_i=\nu_{(i+1)}$ with $(\nu_{(i)},\beta_i)=0$ for some $\beta_i\in\Phi^+$. Therefore, $\la\succeq \mu$.
\end{proof}

\begin{cor}
For $\la, \mu\in \La^+_{\hf+\Z}$, we have $\la\succcurlyeq \mu$ if and only if $\la^\natural\succeq_a \mu^\natural$, where $\la^\natural=(\la^\sharp)^\flat$.
\end{cor}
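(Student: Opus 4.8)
The plan is to obtain this corollary by concatenating the equivalences already proved in this section, checking only that the relevant domains match up at each stage. Fix $\la,\mu\in\La^+_{\hf+\Z}$. Since $\La^+_{\hf+\Z}\subseteq\La^\geqslant_{\hf+\Z}$, the bijection $\sharp$ of \eqref{sharp:map} applies to $\la$ and $\mu$. Moreover a half-integer dominant weight has all parts nonzero (they lie in $\hf+\Z$) and hence strictly decreasing; adding $\mathrm{sgn}(\la_i)\hf$ to each part preserves both properties, so $\la^\sharp,\mu^\sharp\in\La^+_{\Z^\times}\subseteq\La^\geqslant_{\Z^\times}$, and $\flat$ is defined on them. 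Thus both \corref{cor:succcurlyeq = succeq} and \propref{prop:succeq and succeq'} are available for these weights.

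With the hypotheses in place I would simply run the chain
\begin{equation*}
\la\succcurlyeq\mu
\ \overset{\corref{cor:succcurlyeq = succeq}}{\Longleftrightarrow}\
\la^\sharp\succeq\mu^\sharp
\ \overset{\propref{prop:succeq and succeq'}}{\Longleftrightarrow}\
(\la^\sharp)^\flat\succeq_a(\mu^\sharp)^\flat ,
\end{equation*}
and recognize the right-hand relation as $\la^\natural\succeq_a\mu^\natural$ by the definition $\la^\natural=(\la^\sharp)^\flat$. (The first step may equally be replaced by \lemref{lem:partial order} followed by \propref{prop:Bruhat PS-B}.)

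The only point that needs a moment's care — and the closest thing to an obstacle — is the bookkeeping of the index $p$, since $\succeq_a$ is only defined inside a fixed $\Z^{p|q}$. In the forward direction this is automatic: $\la^\sharp\succeq\mu^\sharp$ already forces $\la^\sharp,\mu^\sharp\in\La^\geqslant_{\Z^\times}(p)$ for a common $p$ (this is exactly what is extracted at the start of the proof of \propref{prop:succeq and succeq'} using \lemref{lem:partial order non-negativity}), so $\la^\natural$ and $\mu^\natural$ land in the same $\Z^{p|q}$ and the second equivalence makes sense. In the converse direction the hypothesis $\la^\natural\succeq_a\mu^\natural$ already presupposes a common $\Z^{p|q}$, hence $\la^\sharp,\mu^\sharp\in\La^\geqslant_{\Z^\times}(p)$, and running \propref{prop:succeq and succeq'} and then \corref{cor:succcurlyeq = succeq} backwards yields $\la\succcurlyeq\mu$. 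No genuinely new computation is required beyond this observation, so the proof is essentially a two-line citation.
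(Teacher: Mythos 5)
Your proof is correct and is exactly the argument the paper intends: the corollary is stated without proof precisely because it is the concatenation of Corollary \ref{cor:succcurlyeq = succeq} and Proposition \ref{prop:succeq and succeq'}, with the routine check that $\la^\sharp,\mu^\sharp\in\La^+_{\Z^\times}$. Your remark about the common index $p$ is a sensible bit of diligence but raises no issue beyond what those two results already handle.
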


\section{Dual Zuckerman functors}

Let $\mf b\subseteq\mf p \subseteq \G$ be parabolic subalgebras with Levi subalgebra $\mf h\subseteq \mf l\subseteq \G$, respectively.
Let $\mc{HC}(\G,\mf{l}_\even)$ be the category of $\G$-modules that are direct sums of finite-dimensional simple $\mf{l}_\even$-modules, and let $\mc{HC}(\G,\mf{\G}_{\bar 0})$ be defined similarly.
Let $\mc L^{\G,\mf l}$ be the {\em dual Zuckerman functor} from $\mc{HC}(\G,\mf{l}_\even)$ to $\mc{HC}(\G,\G_\even)$ as in \cite[Section 4]{San}, which is a right exact functor. For $i\ge 0$, we denote by $\mc L^{\G,\mf l}_i$ its $i$th derived functor. For $M\in \mc{HC}(\G,\mf{l}_\even)$, we let
\begin{align*}
\mc E^{\G,\mf l}(M):=\sum_{i\ge 0}(-1)^i\mc L^{\G,\mf l}_i(M)
\end{align*}
be the {\em Euler characteristic of $M$}, which is a virtual $\G$-module.

By the same arguments as in \cite[Sections 4 and 5]{San}, we can check the following.

\begin{prop}\label{prop:same:central}
Let $\la\in\h_\even^*$ such that the irreducible $\mf l$-module $L(\mf{l},\la)$ with highest weight $\la$ is finite dimensional so that $M:={\rm Ind}_{\mf p}^{\G}L(\mf{l},\la)\in\mc{HC}(\G,\mf{l}_\even)$.
\begin{itemize}
\item[(1)] The $\G$-module $\mc L^{\G,\mf l}_0\left(M\right)$ is the maximal finite-dimensional quotient of $M$.

\item[(2)] The $\G$-module $\mc L^{\G,\mf l}_i\left(M\right)$ is finite dimensional for all $i\geq 0$, and $\mc L^{\G,\mf l}_i\left(M\right)=0$ for $i\gg 0$.

\item[(3)] Let $I$ be the annihilator of $M$ in $U(\G)$. Then $I$ annihilates every $\mc{L}^{\G,\mf l}_i\left(M\right)$ for all $i\ge 0$. In particular, all the $\G$-modules $\mc L^{\G,\mf l}_i\left(M\right)$ have the same central character.

\item[(4)] The character of the Euler characteristic of $M$ is given by
\begin{align*}
{\rm ch}\mc E^{\G,\mf l}(M)=D^{-1} \sum_{w\in W}(-1)^{\ell(w)}w\left( \frac{{\rm ch}L(\mf{l},\la)}{\prod_{\alpha\in\Phi^+(\mf{l_{\bar 1}})}(1+e^{-\alpha})} \right),
\end{align*}
where $\Phi^+(\mf{l_{\bar 1}})$ denotes the set of positive roots of $\mf l_\odd$ and
\begin{equation*}
D_{\bar 0}:=\prod_{\alpha\in\Phi^+_{\bar 0}}(e^{\alpha/2}-e^{-\alpha/2}),\quad
D_{\bar 1}:=\prod_{\alpha\in\Phi^+_{\bar 1}}(e^{\alpha/2}+e^{-\alpha/2}),\quad
D :=\frac{D_{\bar 0}}{D_{\bar 1}}.
\end{equation*}

\end{itemize}
\end{prop}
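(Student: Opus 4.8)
The plan is to transcribe the arguments of \cite[Sections 4 and 5]{San} to the queer Lie superalgebra setting; the essential point is that $\G_\even\cong\gl(n)$ is reductive and $\mf l_\even$ is a Levi subalgebra of $\G_\even$, so all the homological input is classical and only $\G_\even$-integrability ever intervenes. I will use repeatedly that $\mc L^{\G,\mf l}_0$ is the underived functor, sending $N\in\mc{HC}(\G,\mf l_\even)$ to its maximal quotient lying in $\mc{HC}(\G,\G_\even)$, i.e.\ to its maximal $\G_\even$-integrable quotient.

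For (1): as a $U(\G)$-module $M=\mathrm{Ind}_{\mf p}^{\G}L(\mf l,\la)$ is generated by the finite-dimensional subspace $L(\mf l,\la)$, hence finitely generated over $U(\G)$, hence over $U(\G_\even)$ as $U(\G)$ is free of finite rank over $U(\G_\even)$; a finitely generated $\G_\even$-integrable module over the reductive $\G_\even$ is finite dimensional, so every quotient of $M$ in $\mc{HC}(\G,\G_\even)$ is finite dimensional, while conversely every finite-dimensional quotient of $M$ factors through $\mc L^{\G,\mf l}_0(M)$ by maximality. For (2): computing the $\mc L^{\G,\mf l}_i(M)$ from a resolution of $M$ by finitely generated projectives of $\mc{HC}(\G,\mf l_\even)$ shows, as in \cite[Section 4]{San}, that each $\mc L^{\G,\mf l}_i(M)$ is finitely generated over $U(\G)$ and lies in $\mc{HC}(\G,\G_\even)$, hence is finite dimensional by the argument just used; and $\mc L^{\G,\mf l}$ has finite cohomological dimension $\le|\Phi^+_\even\setminus\Phi^+(\mf l_\even)|$, as for the classical pair $(\G_\even,\mf l_\even)$, so $\mc L^{\G,\mf l}_i(M)=0$ for $i\gg 0$. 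For (3): as in \cite[Section 4]{San} the derived Zuckerman functors do not decrease annihilator ideals, so $I:=\mathrm{Ann}_{U(\G)}(M)$ annihilates each $\mc L^{\G,\mf l}_i(M)$; since $M$ is generated by its vectors of $\G_\even$-weight $\la$ killed by $\mf n$, on which the centre $Z(\G)$ (Sergeev \cite{Sv}) acts through $\chi_\la$, we have $\ker\chi_\la\subseteq I$, so $Z(\G)$ acts on each $\mc L^{\G,\mf l}_i(M)$ through $\chi_\la$.

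For (4): the long exact sequences of the $\mc L^{\G,\mf l}_i$ make $N\mapsto\mathrm{ch}\,\mc E^{\G,\mf l}(N)$ additive on short exact sequences of finitely generated objects of $\mc{HC}(\G,\mf l_\even)$, so it factors through the Grothendieck group, and one then follows \cite[Section 5]{San}. The first input is a super Weyl-type formula for modules induced from the Borel: PBW gives $U(\mf n_-)=S((\mf n_-)_\even)\otimes\Lambda((\mf n_-)_\odd)$, and since for $\mf q(n)$ both $(\mf n_-)_\even$ and $(\mf n_-)_\odd$ carry weight bases indexed by $\Phi^-$, one gets $\mathrm{ch}\,\mathrm{Ind}_{\mf b}^{\G}W_\mu=\mathrm{ch}\,W_\mu\cdot\prod_{\alpha\in\Phi^+}(1+e^{-\alpha})(1-e^{-\alpha})^{-1}=D^{-1}\mathrm{ch}\,W_\mu$; taking the termwise Euler characteristic of the complex computing $\mc L^{\G,\mf h}_\bullet(\mathrm{Ind}_{\mf b}^{\G}W_\mu)$ antisymmetrizes over $W$, and, using that $D$ is $W$-anti-invariant and that for $\mf q(n)$ one has $\rho=0$ and $\dim W_{w\mu}=\dim W_\mu$ (so $w(\mathrm{ch}\,W_\mu)=\mathrm{ch}\,W_{w\mu}$), yields
\begin{equation*}
\mathrm{ch}\,\mc E^{\G,\mf h}(\mathrm{Ind}_{\mf b}^{\G}W_\mu)=D^{-1}\sum_{w\in W}(-1)^{\ell(w)}w\big(\mathrm{ch}\,W_\mu\big),
\end{equation*}
and likewise $\mathrm{ch}\,\mc E^{\mf l,\mf h}(\mathrm{Ind}_{\mf b\cap\mf l}^{\mf l}W_\mu)=D_{\mf l}^{-1}\sum_{v\in W_{\mf l}}(-1)^{\ell(v)}v\big(\mathrm{ch}\,W_\mu\big)$ for the analogous data of $\mf l$, where $D_{\mf l}$ is the obvious $\mf l$-analogue of $D=D_\even/D_\odd$. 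The second input is the classical identity $\sum_{v\in W_{\mf l}}v\big(\prod_{\alpha\in\Phi^+(\mf l_\even)}(1-e^{-\alpha})^{-1}\big)=1$ (equivalent to the Weyl denominator formula for $\mf l_\even$), which after multiplying by the $W_{\mf l}$-invariant $\mathrm{ch}\,L(\mf l,\la)$ gives $\mathrm{ch}\,L(\mf l,\la)=D_{\mf l}^{-1}\sum_{v\in W_{\mf l}}(-1)^{\ell(v)}v\big(\mathrm{ch}\,L(\mf l,\la)\prod_{\alpha\in\Phi^+(\mf l_\odd)}(1+e^{-\alpha})^{-1}\big)$. Then, using induction in stages $\mathrm{Ind}_{\mf b}^{\G}=\mathrm{Ind}_{\mf p}^{\G}\circ\mathrm{Ind}_{\mf b\cap\mf l}^{\mf l}$, the transitivity $\mc L^{\G,\mf h}_\bullet\cong(\mc L^{\G,\mf l}\circ\mc L^{\mf l,\mf h})_\bullet$, the compatibility of $\mc L^{\mf l,\mf h}$ with $\mathrm{Ind}_{\mf p}^{\G}$, and the expansion of $[\mc E^{\mf l,\mf h}(\mathrm{Ind}_{\mf b\cap\mf l}^{\mf l}W_\mu)]$ in the $[L(\mf l,\nu)]$ (unit-triangular for the dominance order, hence invertible), one solves for $\mathrm{ch}\,\mc E^{\G,\mf l}(M)$ and reduces its evaluation, via $\ell(\bar wv)=\ell(\bar w)+\ell(v)$ for minimal coset representatives $\bar w\in W/W_{\mf l}$ and $v\in W_{\mf l}$, to the two inputs above; this produces the asserted formula.

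The homological statements (1)--(3) are a routine transcription of \cite[Section 4]{San}, since only $\G_\even$-integrability is involved and $\G_\even$ is reductive; the substantive part is (4), where I expect the main obstacle to be the bookkeeping of the parity-split factors $D_\even$, $D_\odd$, $D_{\mf l}$, and $\prod_{\alpha\in\Phi^+(\mf l_\odd)}(1+e^{-\alpha})$ through the transitivity isomorphism and the passage from $W_{\mf l}$- to $W$-antisymmetrization.
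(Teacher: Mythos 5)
Your proposal is correct and takes essentially the same route as the paper, which itself offers no details beyond the assertion that the statements follow "by the same arguments as in [San, Sections 4 and 5]"; your write-up is a faithful (and more explicit) transcription of exactly those arguments, with the parity bookkeeping for $D_{\bar 0}$, $D_{\bar 1}$ handled correctly (in particular the cancellation coming from $\rho_\even=\rho_\odd$ for $\mf q(n)$).
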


For an exact sequence in $\mc{HC}(\G,\mf{l}_\even)$
\begin{align*}
0\longrightarrow M\longrightarrow E\longrightarrow N\longrightarrow 0,
\end{align*}
we have the following identity of virtual modules:
\begin{align}\label{Euler:additive}
\mc E^{\G,\mf l}\left(E\right)=\mc E^{\G,\mf l}\left(M\right)+\mc E^{\G,\mf l}\left(N\right).
\end{align}
Thus, Proposition \ref{prop:same:central}(4) remains valid with $L(\mf l,\la)$ replaced by a finite-dimensional $\mf l$-module.


\begin{rem}
Let $\mf b\subseteq\mf p\subseteq \mf q\subseteq \G$ be parabolic subalgebras with Levi subalgebras $\mf h\subseteq\mf l\subseteq \mf k\subseteq\G$, respectively. Let $\mf u^{\G,\mf l}$ and $\mf u^{\G,\mf k}$ be the nilradicals of $\mf p$ and $\mf q$ such that $\mf p=\mf l+\mf u^{\G,\mf l}$ and $\mf q=\mf k+\mf u^{\G,\mf k}$, respectively. Let $\mf u_-^{\G,\mf l}$ and $\mf u_-^{\G,\mf k}$ denote the respective opposite nilradicals. Suppose that $M\in \mc{HC}(\mf{l},\mf{l}_\even)$ which we extend to a $\mf p$-module by $\mf u^{\G,\mf l}M=0$.  Consider the $\mf q$-module ${\rm Ind}_{\mf p}^{\mf q}M = U(\mf k\cap \mf u_-^{\G,\mf l})\otimes M$. Since $[\mf k,\mf u^{\G,\mf k}]\subseteq \mf u^{\G,\mf k}\subseteq \mf u^{\G,\mf l}$, we have $\mf u^{\G,\mf k}{\rm Ind}_{\mf p}^{\mf q}M=0$. We conclude that ${\rm Ind}_{\mf p}^{\mf q}M$ is a $\mf k$-module that is trivially extended to a $\mf q$-module.
\end{rem}

The following is the analogue of \cite[Theorem 1]{PS2} and \cite[Theorem 1]{GS} using the language of dual Zuckerman functor.

\begin{lem}\label{lem:nested:Euler} Let $\mf b\subseteq\mf p\subseteq \mf q\subseteq \G$ be parabolic subalgebras with Levi subalgebras $\mf h\subseteq\mf l\subseteq \mf k\subseteq\G$, respectively. For $M\in \mc{HC}(\mf{l},\mf{l}_\even)$, we have
\begin{align*}
\mc E^{\G,\mf l}\left({\rm Ind}^{\G}_{\mf p}M\right)
=
\mc E^{\G,\mf k}\left({\rm Ind}^{\G}_{\mf q} \mc E^{\mf k,\mf l}\left({\rm Ind}^{\mf q}_{\mf p}M\right)\right),
\end{align*}
where $M$ is regarded as a $\mf p$-module by letting the nilradical act trivially.
\end{lem}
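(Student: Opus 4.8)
The plan is to reduce the identity to a statement about the composition of derived functors and then invoke a Grothendieck-type spectral sequence. The key algebraic fact is transitivity: the dual Zuckerman functor $\mc L^{\G,\mf l}$, which produces the maximal locally $\G_\even$-finite quotient (in the appropriate completed sense), factors through $\mc L^{\G,\mf k}\circ \mc L^{\mf k,\mf l}$, where on the right $\mc L^{\mf k,\mf l}$ is taken inside the category of $\mf k$-modules and $\mc L^{\G,\mf k}$ is applied after inducing up via $\mathrm{Ind}^{\G}_{\mf q}$. First I would make precise that for $M\in\mc{HC}(\mf l,\mf l_\even)$ extended trivially along the nilradical of $\mf p$, one has a natural isomorphism of functors
\begin{align*}
\mc L^{\G,\mf l}\left(\mathrm{Ind}^{\G}_{\mf p}M\right)\;\cong\;\mc L^{\G,\mf k}\left(\mathrm{Ind}^{\G}_{\mf q}\,\mc L^{\mf k,\mf l}\left(\mathrm{Ind}^{\mf q}_{\mf p}M\right)\right)
\end{align*}
on the level of $\mc L_0$, using that $\mathrm{Ind}^{\G}_{\mf p}=\mathrm{Ind}^{\G}_{\mf q}\circ\mathrm{Ind}^{\mf q}_{\mf p}$ (transitivity of induction), that $\mathrm{Ind}^{\mf q}_{\mf p}M$ is a trivially-extended $\mf k$-module by the Remark above, and that taking maximal $\G_\even$-finite quotients can be performed in two stages: first make it $\mf k_\even$-finite, then $\G_\even$-finite. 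This is the content of Proposition~\ref{prop:same:central}(1) applied twice.

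Next I would upgrade this to derived functors. The composite $\mathrm{Ind}^{\G}_{\mf q}\circ\mc L^{\mf k,\mf l}\circ\mathrm{Ind}^{\mf q}_{\mf p}$ sends the relevant objects to acyclic objects for the outer functor $\mc L^{\G,\mf k}$ (more precisely, one checks that $\mathrm{Ind}^{\G}_{\mf q}$ of a finite-dimensional $\mf k$-module is adapted to $\mc L^{\G,\mf k}$, since both Zuckerman functors are computed via the same Koszul-type resolution and induction is exact), so a Grothendieck spectral sequence
\begin{align*}
E_2^{p,q}=\mc L^{\G,\mf k}_p\left(\mathrm{Ind}^{\G}_{\mf q}\,\mc L^{\mf k,\mf l}_q\left(\mathrm{Ind}^{\mf q}_{\mf p}M\right)\right)\;\Longrightarrow\;\mc L^{\G,\mf l}_{p+q}\left(\mathrm{Ind}^{\G}_{\mf p}M\right)
\end{align*}
converges. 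Taking the alternating sum of dimensions of each $\h_\even$-weight space kills the spectral sequence differentials, so in the Grothendieck group $\sum_i(-1)^i[\mc L^{\G,\mf l}_i(\mathrm{Ind}^{\G}_{\mf p}M)]=\sum_{p,q}(-1)^{p+q}[E_2^{p,q}]$. Combining this with the additivity \eqref{Euler:additive} of the Euler characteristic on short exact sequences (applied to a composition series of the finite-dimensional $\mf k$-modules $\mc L^{\mf k,\mf l}_q(\mathrm{Ind}^{\mf q}_{\mf p}M)$, so that $\mc E^{\G,\mf k}\circ\mathrm{Ind}^{\G}_{\mf q}$ can be pulled out through the $\sum_q(-1)^q$) yields exactly the asserted identity of virtual modules.

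The main obstacle I anticipate is the verification that the composite functor $\mathrm{Ind}^{\G}_{\mf q}\circ\mc L^{\mf k,\mf l}(\mathrm{Ind}^{\mf q}_{\mf p}(-))$ carries projective (or otherwise acyclic) objects to $\mc L^{\G,\mf k}$-acyclic objects, which is what legitimizes the Grothendieck spectral sequence; this requires knowing that the derived Zuckerman functors commute appropriately with induction from a parabolic and that the relevant categories $\mc{HC}(\G,\mf k_\even)$ have enough acyclics for both functors simultaneously. Here I would lean on the arguments of \cite[Sections 4 and 5]{San} verbatim, as the paper already does in Proposition~\ref{prop:same:central}, noting that the finiteness statements in parts (2) and (3) of that Proposition guarantee that everything in sight is finite-dimensional, so convergence and boundedness of the spectral sequence are automatic. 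The parity/grading bookkeeping (the $\Pi$-equivariance and the fact that $\mf u^{\G,\mf k}\subseteq\mf u^{\G,\mf l}$, used in the Remark) is routine once the functorial framework is set up.
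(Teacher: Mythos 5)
Your overall strategy (transitivity of derived Zuckerman functors, packaged as a Grothendieck spectral sequence whose differentials die upon taking Euler characteristics) is a reasonable one in principle, but the step you offer to justify the spectral sequence is wrong as stated, and the correct version of that step is precisely the hard part. You claim that $\mathrm{Ind}^{\G}_{\mf q}$ of a finite-dimensional $\mf k$-module is $\mc L^{\G,\mf k}$-acyclic ("adapted to $\mc L^{\G,\mf k}$"). That is false in general: the higher derived functors $\mc L^{\G,\mf k}_i\bigl({\rm Ind}^{\G}_{\mf q}L(\mf k,\mu)\bigr)$ for $i>0$ are exactly the multiplicities $m^{\G,\mf q}_i$ that the Penkov--Serganova algorithm computes, and they do not vanish in the atypical case. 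Moreover, if your acyclicity claim were true, the $E_2$-page would be concentrated in the bottom row and the spectral sequence would yield $\mc L^{\G,\mf l}_n\cong \mc L^{\G,\mf k}_0\circ{\rm Ind}^{\G}_{\mf q}\circ\mc L^{\mf k,\mf l}_n\circ{\rm Ind}^{\mf q}_{\mf p}$, which contradicts the statement you are trying to prove (the right-hand side of the lemma genuinely involves all the higher $\mc L^{\G,\mf k}_p$). What the Grothendieck spectral sequence actually requires is that the inner composite send \emph{projective (or otherwise adapted) objects of} $\mc{HC}(\mf l,\mf l_\even)$ to $\mc L^{\G,\mf k}$-acyclic objects, together with an identification of the derived functors of the composite with $\mc L^{\G,\mf l}_\bullet\circ{\rm Ind}^{\G}_{\mf p}$. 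Neither point is addressed by citing Santos "verbatim": Santos constructs the super dual Zuckerman functors and their derived functors but does not prove a transitivity statement for nested parabolics, so your main engine remains unproved.

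The paper's proof is engineered to avoid exactly this issue. It never invokes super transitivity directly; instead it restricts everything to $\G_\even$ via \eqref{eq:Res}, where ${\rm Res}^\G_{\G_\even}{\rm Ind}^{\G}_{\mf p}V\cong{\rm Ind}^{\G_\even}_{\mf p_\even}\bigl(\La((\mf u^{\G,\mf l}_-)_\odd)\otimes V\bigr)$ and the super Euler characteristic restricts to the classical one. The classical transitivity \eqref{eq:transitivity} is then quoted from Knapp--Vogan, the extra exterior-algebra factors are moved across the inner Euler characteristic by the Mackey isomorphism \eqref{eq:Mackey}, and the factorization $\La((\mf u^{\G,\mf l}_-)_\odd)\cong\La((\mf u^{\G,\mf k}_-)_\odd)\otimes\La((\mf u^{\mf k,\mf l}_-)_\odd)$ reassembles the two steps into one. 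Since the identity is only one of virtual modules (characters), checking it after restriction to $\G_\even$ suffices. If you want to salvage your approach, you would need to supply the super analogue of \cite[Proposition 2.19]{KV} with the correct acyclicity hypothesis; otherwise the reduction to the even subalgebra is the more economical route.
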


\begin{proof}
First we have from the transitivity of dual Zuckerman functors (cf.~\cite[Proposition 2.19]{KV}) that for $M\in\mc{HC}(\mf{l},\mf{l}_\even)$
\begin{equation}\label{eq:transitivity}
\mc E^{\G_\even,\mf l_\even}\left({\rm Ind}^{\G_\even}_{\mf p_\even}M\right)
=
\mc E^{\G_\even,\mf k_\even}\left({\rm Ind}^{\G_\even}_{\mf q_\even} \mc E^{\mf k_\even,\mf l_\even}\left({\rm Ind}^{\mf q_\even}_{\mf p_\even}M\right)\right),
\end{equation}
where we regard $M$ as an $\mf l_\even$-module. Also from Mackey isomorphism (cf.~\cite[Chapter II.5]{KV}, we can deduce that for a finite-dimensional $\G_\even$-module $N$
\begin{equation}\label{eq:Mackey}
N\otimes \mc E^{\G_\even,\mf l_\even}(M) \cong \mc E^{\G_\even,\mf l_\even}(N\otimes M),
\end{equation}

Let $\mf u^{\G,\mf k}_{-}$, $\mf u^{\mf k,\mf l}_{-}$, and $\mf u^{\mf g,\mf l}_{-}$ be the opposite nilradicals such that $\G=\mf q \oplus \mf u^{\G,\mf k}_{-}$, $\mf q=\mf p \oplus \mf u^{\mf k,\mf l}_{-}$, and $\G=\mf p \oplus \mf u^{\G,\mf l}_{-}$, respectively.
Note that
\begin{equation}\label{eq:Res}
\begin{split}
&{\rm Res}^\G_{\G_\even}\mc E^{\G,\mf l}(U)=\mc E^{\G_\even,\mf l_\even}({\rm Res}^\G_{\G_\even}U),\\
&{\rm Res}^\G_{\G_\even}{\rm Ind}^{\G}_{\mf p}V \cong {\rm Ind}^{\G_\even}_{\mf p_\even}\left(\La((\mf u^{\G,\mf l}_{-})_\odd)\otimes {\rm Res}^{\mf l}_{\mf l_\even}V \right),
\end{split}
\end{equation}
for $U\in \mc{HC}(\G,\mf l_\even)$ and $V\in \mc{HC}({\mf l},{\mf l}_\even)$. For simplicity, let us omit the notation of ${\rm Res}$ if there is no confusion.

Now for $M\in \mc{HC}({\mf l},{\mf l}_\even)$, we have as $\G_\even$-modules
{\allowdisplaybreaks
\begin{align*}
&\mc E^{\G,\mf k}\left({\rm Ind}^{\G}_{\mf q} \mc E^{\mf k,\mf l}\left({\rm Ind}^{\mf q}_{\mf p}M\right)\right)\\
&\cong
\mc E^{\G_\even,\mf k_\even}\left({\rm Ind}^{\G_\even}_{\mf q_\even}\left(\La((\mf u^{\G,\mf k}_{-})_\odd)\otimes \mc E^{\mf k_\even,\mf l_\even}\left({\rm Ind}^{\mf q_\even}_{\mf p_\even}\left(\La((\mf u^{\mf k,\mf l}_{-})_\odd)\otimes M\right)\right)\right)\right) \quad\quad\quad \text{by \eqref{eq:Res}}\\
&\cong
\mc E^{\G_\even,\mf k_\even}\left({\rm Ind}^{\G_\even}_{\mf q_\even}\left(\mc E^{\mf k_\even,\mf l_\even}\left(\La((\mf u^{\G,\mf k}_{-})_\odd)\otimes{\rm Ind}^{\mf q_\even}_{\mf p_\even}\left(\La((\mf u^{\mf k,\mf l}_{-})_\odd)\otimes M\right)\right)\right)\right)
\quad\quad\quad \text{by \eqref{eq:Mackey}}\\
&\cong
\mc E^{\G_\even,\mf k_\even}\left({\rm Ind}^{\G_\even}_{\mf q_\even}\left(\mc E^{\mf k_\even,\mf l_\even}\left({\rm Ind}^{\mf q_\even}_{\mf p_\even}\left(\La((\mf u^{\G,\mf k}_{-})_\odd)\otimes\La((\mf u^{\mf k,\mf l}_{-})_\odd)\otimes M\right)\right)\right)\right)\\
&\cong
\mc E^{\G_\even,\mf k_\even}\left({\rm Ind}^{\G_\even}_{\mf q_\even}\left(\mc E^{\mf k_\even,\mf l_\even}\left({\rm Ind}^{\mf q_\even}_{\mf p_\even}\left(\La((\mf u^{\G,\mf l}_{-})_\odd) \otimes M\right)\right)\right)\right)\\
&\cong
\mc E^{\G_\even,\mf l_\even}\left({\rm Ind}^{\G_\even}_{\mf p_\even} \left(\La((\mf u^{\G,\mf l}_{-})_\odd) \otimes M \right)\right)
\hskip 5.9cm \text{by \eqref{eq:transitivity}}\\
&\cong
\mc E^{\G,\mf l}\left({\rm Ind}^{\G}_{\mf q}M\right).
\end{align*}}
The proof is complete.
\end{proof}

For $\la\in \La^+$, we may understand $\la$ as a weight for $\mf l$ or $\mf k$. We regard  the irreducible $\mf l$-module $L(\mf l,\la)$ with highest weight $\la$ as an irreducible $\mf p$-module by letting the nilradical act trivially. Similarly, we have the irreducible $\mf k$-module $L(\mf k,\la)$ which can be viewed as an irreducible $\mf q$-module.
For $\la, \mu\in \La^+$, we let
\begin{align*}
m^{\mf q,\mf p}_i(\la,\mu):=\left[\mc L^{\mf k,\mf l}_i\left({\rm Ind}_{\mf p}^{\mf q}L(\mf l,\la)\right):L(\mf k,\mu)\right],
\end{align*}
the multiplicity of the irreducible $\mf k$-module $L(\mf k,\mu)$ in $\mc L^{\mf k,\mf l}_i\left({\rm Ind}_{\mf p}^{\mf q}L(\mf l,\la)\right)$.

Let $\la\in \La^+$ be given. Put
\begin{align}\label{llambda}
\mf l(\la)=\mf h\oplus\bigoplus_{\alpha\in \Phi^+(\la)}(\G_\alpha\oplus \G_{-\alpha}),\quad \mf p(\la)=\mf l(\la) +\mf b,
\end{align}
where $\Phi^+(\la)=\{\,\varepsilon_i-\varepsilon_j\,|\,\, \la_i=\la_j\ (i< j)\,\}$ and $\G_\alpha$ is the root space of $\G$ corresponding to $\alpha$. Note that $\mf l(\la)=\mf h$ for $\la\in \La^+_{\hf+\Z}$.

Let $E(\la):=\mc E^{\G,\mf l(\la)}\left({\rm Ind}_{\mf p(\la)}^\G W_\la\right)$ and here $W_\la$ is regarded as an irreducible $\mf l(\la)$-module. We have by Proposition~\ref{prop:same:central}(4):
\begin{equation}\label{eq:Euler character}
{\rm ch}E(\la)=2^{\lceil \ell(\la)/2 \rceil}D^{-1}\sum_{w\in W}(-1)^{\ell(w)}w\left(\frac{e^\la}{\prod_{\beta\in \Phi^+(\la)}(1+e^{-\beta})} \right).
\end{equation}
It is well-known that we have the following two bases in $K(\mc O^{\texttt{fin}}_{\epsilon+\Z})$:
\begin{equation*}
\left\{\,[E(\la)]\,\big\vert \,\la\in\La^+_{\epsilon+\Z}\,\right\},\quad \left\{\,[L(\la)]\,\big\vert \,\la\in\La^+_{\epsilon+\Z}\,\right\}.
\end{equation*}
We thus have
\begin{align}\label{E:in:L}
[E(\la)]=\sum_{\mu\in\La^+_{\epsilon+\Z}}a_{\la\mu}[L(\mu)],
\end{align}
where $a_{\la\mu}=\sum_{i\ge 0}(-1)^im_i^{\G,\mf p(\la)}(\lambda,\mu)$.

Let
\begin{align}\label{seq:parabolic}
\mf p(\la)= \mf p_0\subseteq \mf p_1\subseteq\cdots\subseteq\mf p_{k-1}\subseteq\mf p_k=\G
\end{align}
be a sequence of parabolic subalgebras with respective sequence of Levi subalgebras
\begin{align*}
\mf l(\la)=\mf l_0\subseteq \mf l_1\subseteq\cdots\subseteq\mf l_{k-1}\subseteq\mf l_k=\G.
\end{align*}
We have
\begin{align*}
{\rm Ind}_{\mf p(\la)}^\G W_\la = {\rm Ind}_{\mf p_{k-1}}^{\mf p_k}\cdots{\rm Ind}_{\mf p_1}^{\mf p_2}{\rm Ind}_{\mf p_0}^{\mf p_1} W_\la.
\end{align*}
If we put $\mc E^{s,s-1}=\mc E^{\mf l_s, \mf l_{s-1}}$, ${\rm Ind}_{s-1}^{s}={\rm Ind}_{\mf p_{s-1}}^{\mf p_s}$, and $m_i^{s,s-1}(\mu,\nu)=m_{i}^{\mf p_s,\mf p_{s-1}}(\mu,\nu)$ for $1\leq s\leq k$, $i\geq 0$, and $\mu,\nu\in \La^+$, then we have by Lemma~\ref{lem:nested:Euler}
{\allowdisplaybreaks
\begin{align*}
& E(\la)\\
& = \mc E^{k,0}\left({\rm Ind}_{{k-1}}^{k}\cdots{\rm Ind}_{1}^{2}{\rm Ind}_{0}^{1} W_\la\right)\\
&=\mc E^{k,k-1}\left({\rm Ind}_{{k-1}}^{k}\mc E^{k-1,k-2}\left(\cdots\mc E^{2,1}\left({\rm Ind}_{1}^{2}\mc E^{1,0}\left({\rm Ind}_{0}^{1} W_\la\right)\right)\cdots\right)\right)\\
&=\mc E^{k,k-1}\left( \cdots \mc E^{2,1}\left({\rm Ind}_{1}^{2}\sum_{i_1\ge 0, \  \la^{(1)}}(-1)^{i_1} m^{1,0}_{i_1}(\la,\la^{(1)})L(\mf l_1,\la^{(1)})\right)\cdots\right)\\
&=\mc E^{k,k-1}\left(\cdots\mc E^{3,2}\left({\rm Ind}_{2}^{3}\sum_{\substack{i_1,i_2\ge 0 \\  \la^{(1)}, \la^{(2)}}}(-1)^{i_1+i_2} m^{1,0}_{i_1}(\la,\la^{(1)}) m^{2,1}_{i_2}(\la^{(1)},\la^{(2)})L(\mf l_2,\la^{(2)})\right)\cdots\right)\\
&= \cdots \\
&=\sum_{\substack{i_1,\ldots,i_k\ge 0 \\ \la^{(1)},\ldots,\la^{(k)}}}(-1)^{i_1+\cdots+i_k} m^{1,0}_{i_1}(\la,\la^{(1)}) m^{2,1}_{i_2}(\la^{(1)},\la^{(2)})\cdots m^{k,k-1}_{i_k}(\la^{(k-1)},\la^{(k)})L(\la^{(k)}).
\end{align*}}
Comparing with \eqref{E:in:L} we get the following formula for $a_{\la\mu}$:
\begin{align}\label{formula:a}
\begin{split}
a_{\la\mu}&=\sum_{\substack{i_1,\ldots,i_k\ge 0 \\ \la^{(1)},\ldots,\la^{(k-1)}}}(-1)^{i_1+\cdots+i_k}
\prod_{s=1}^k m^{s,s-1}_{i_s}(\la^{(s-1)},\la^{(s)}),
\end{split}
\end{align}
where $\la^{(0)}=\la$ and $\la^{(k)}=\mu$. Note that for different $\mu$'s, we can choose in principle different sequences of parabolic subalgebras. This way we obtain a method to compute the coefficients $a_{\la\mu}$, if we can find a sequence of parabolic subalgebras as in \eqref{seq:parabolic} for which we can compute the multiplicity $m^{s,s-1}_i(\nu,\eta)$, for all relevant $\nu,\eta$. This is the essence of the algorithm in \cite{PS2}, which we shall use in the next section.

\section{Penkov-Serganova algorithm}

Below is an analogue of the ``typical lemma'' (\cite[Lemma 5]{GS}) for the queer Lie superalgebra. Although our proof below is algebraic in nature, it is inspired by loc.~cit.  We mention that this lemma already appeared in \cite[Theorem 2]{PS2}.

\begin{prop}\label{prop:typical1}
Let $\la=\sum_{i=1}^n\la_i\varepsilon_i \in \La^+$ be given. Suppose that $\mf l$ is a Levi subalgebra that contains every positive root atypical to $\la$.
Then we have
\begin{align*}
\mc L^{\G,\mf l}_i\left({\rm Ind}_{\mf p}^\G L(\mf l,\la)\right)=
\begin{cases}
L(\la),&\text{ if }i=0,\\
0,&\text{ if } i>0.
\end{cases}
\end{align*}
In particular, we have ${\rm ch}\mc{E}^{\G,\mf l}{\left({\rm Ind}_{\mf p}^\G L(\mf l,\la)\right)}={\rm ch}{L(\la)}$.
\end{prop}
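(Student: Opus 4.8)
The plan is to study the module $M:={\rm Ind}_{\mf p}^\G L(\mf l,\la)$ directly and to push the computation down to the reductive Lie algebra $\G_\even\cong\gl(n)$. First I would draw out the consequences of \propref{prop:same:central}. Part (1) says $\mc L^{\G,\mf l}_0(M)$ is the maximal finite-dimensional quotient of $M$; the adjunction ${\rm Hom}_\G(M,L(\nu))\cong{\rm Hom}_{\mf l}(L(\mf l,\la),L(\nu)^{\mf u})$, with $\mf u$ the nilradical of $\mf p$, is nonzero only if $\la$ is a weight of $L(\nu)$, and since every weight of $M$ is $\preccurlyeq\la$ this forces $\nu=\la$, so $\mc L^{\G,\mf l}_0(M)$ has simple top $L(\la)$ and in particular $\mc L^{\G,\mf l}_0(M)\twoheadrightarrow L(\la)$. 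Parts (2)--(3) say each $\mc L^{\G,\mf l}_i(M)$ is finite dimensional, vanishes for $i\gg 0$, and has central character $\chi_\la$; hence every composition factor of every $\mc L^{\G,\mf l}_i(M)$ is some $L(\mu)$ with $\mu\in\La^+$, $\mu\preccurlyeq\la$ and $\chi_\mu=\chi_\la$. It therefore suffices to prove that $L(\la)$ is the only such factor and that it occurs only for $i=0$: then $\mc L^{\G,\mf l}_i(M)=0$ for $i>0$, while $\mc L^{\G,\mf l}_0(M)$ has all composition factors $L(\la)$, hence is semisimple because ${\rm Ext}^1(L(\la),L(\la))=0$, hence equals $L(\la)$ as it has simple top $L(\la)$; the stated character identity is then immediate.

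Next I would reduce to the case that $\mf l$ is a maximal proper Levi subalgebra, by induction on $\dim\G/\mf l$, using the evident generalization of the proposition with $\G$ replaced by an intermediate Levi. If $\mf l$ is not already maximal proper, choose a maximal proper Levi $\mf k$ with $\mf l\subsetneq\mf k\subsetneq\G$; then $\mf k\supseteq\mf l$ contains every positive root atypical to $\la$, and $\mf l$ contains every positive root of $\mf k$ atypical to $\la$. By the inductive hypothesis for $(\mf l,\mf k)$, $\mc L^{\mf k,\mf l}_i({\rm Ind}^{\mf k}_{\mf p\cap\mf k}L(\mf l,\la))=\delta_{i,0}L(\mf k,\la)$, so the inner Euler characteristic appearing in \lemref{lem:nested:Euler} is an honest module; restricting to $\G_\even$ and rerunning the chain of isomorphisms in the proof of \lemref{lem:nested:Euler} --- now at the level of the derived functors, which is legitimate precisely because the potentially obstructing higher terms vanish by the inductive hypothesis --- gives ${\rm Res}^\G_{\G_\even}\mc L^{\G,\mf l}_i(M)\cong{\rm Res}^\G_{\G_\even}\mc L^{\G,\mf k}_i({\rm Ind}^\G_{\mf q}L(\mf k,\la))$ with $\mf q=\mf p+\mf k$. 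Since ${\rm Res}^\G_{\G_\even}$ is exact, detects vanishing and preserves dimension, this reduces the assertion for $(\mf l,\G)$ to the one for $(\mf k,\G)$; so I may assume $\mf l$ is maximal proper, i.e.\ $\mf l_\even\cong\gl(a)\oplus\gl(b)$ with $a+b=n$, the hypothesis now reading $\la_i+\la_j\neq 0$ whenever $i\le a<j$.

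For such $\mf l$ the nilradical $\mf u^{\G,\mf l}$ of $\mf p$ is abelian, and \eqref{eq:Res} gives
\[
{\rm Res}^\G_{\G_\even}\mc L^{\G,\mf l}_i(M)\;\cong\;\mc L^{\G_\even,\mf l_\even}_i\!\left({\rm Ind}^{\G_\even}_{\mf p_\even}\!\left(\La((\mf u^{\G,\mf l}_-)_\odd)\otimes{\rm Res}^{\mf l}_{\mf l_\even}L(\mf l,\la)\right)\right).
\]
Decomposing the $\mf l_\even$-module $\La((\mf u^{\G,\mf l}_-)_\odd)\otimes{\rm Res}^{\mf l}_{\mf l_\even}L(\mf l,\la)$ into irreducibles $L(\mf l_\even,\eta)$ (all with $\eta\preccurlyeq\la$) and applying the classical description of the derived dual Zuckerman functor for $\gl(n)$ with a maximal parabolic (the algebraic form of Bott's theorem; cf.\ \cite{KV}), each summand contributes nothing unless $\eta+\rho_\even$ is $\gl(n)$-regular, in which case it contributes the single irreducible $L(\gl(n),w_\eta\cdot\eta)$ concentrated in one homological degree $d(\eta)$, where $w_\eta\in W$ is determined by requiring $w_\eta\cdot\eta$ to be $\gl(n)$-dominant. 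Thus every composition factor $L(\mu)$ of $\mc L^{\G,\mf l}_i(M)$ has $\mu=w_\eta\cdot\eta$ for such an $\eta$ with $d(\eta)=i$. It remains to prove --- and this is the crux --- that, under the hypothesis, the only $w_\eta\cdot\eta$ arising here with $\chi_{w_\eta\cdot\eta}=\chi_\la$ is $\la$ itself, coming from $\eta=\la$, $w_\eta=e$, $i=0$: writing such an $\eta$ as $\la-\theta'-\theta''$ with $\theta'$ a sum of \emph{distinct} crossing roots (contributed by the exterior algebra) and $\theta''$ a sum of positive roots of $\mf l$ (contributed by the descent inside $L(\mf l,\la)$), one would invoke Sergeev's description of the centre of $\mf q(n)$ \cite{Sv} --- together with the fact that $\rho=0$ for $\mf q(n)$ while $\rho_\even\neq 0$ --- to see that a nonzero $\theta'$, being built from roots all typical to $\la$, takes $\eta$, hence $w_\eta\cdot\eta$, out of the linkage class of $\la$, and that a nontrivial $w_\eta$ does so as well. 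Combined with Step 1, this finishes the proof.

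The step I expect to be the main obstacle is this last combinatorial point: pinning down the $\mf q(n)$-linkage relation precisely and verifying that the typicality of \emph{every} crossing root genuinely rules out all composition factors except $L(\la)$. This is exactly where the hypothesis on $\mf l$ enters, and it is the algebraic counterpart of the Borel--Weil--Bott vanishing that drives the geometric proof in \cite{PS2}.
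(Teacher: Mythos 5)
Your overall architecture matches the paper's: restrict to $\G_\even$, obtain a Bott-type description of the possible composition factors of $\mc L^{\G,\mf l}_i\left({\rm Ind}_{\mf p}^\G L(\mf l,\la)\right)$, and then eliminate everything except $L(\la)$ in degree $0$ using the central-character constraint from \propref{prop:same:central}(3). But the step you yourself flag as the crux is precisely where the key idea is missing, and the justification you propose does not work as stated. You write the candidate highest weights as $w_\eta\cdot\eta$ with $\eta=\la-\theta'-\theta''$ and assert that a nonzero $\theta'$ (a sum of typical crossing roots) or a nontrivial $w_\eta$ "takes $\eta$ out of the linkage class of $\la$". Neither assertion follows from Sergeev's description of the centre alone. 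For the second: the $\mf q(n)$-linkage class of $\la$ is stable under the plain $W$-action, so twisting by $w_\eta$ by itself never leaves it; what must be controlled is the $\rho_\even$-shift in the dot action, and the paper does this via $\rho_\even=\rho_\odd$, rewriting $w\left(\la-\sum_{\alpha\in J}\alpha+\rho_\even\right)-\rho_\even=w(\la)-\sum_{\alpha\in K}\alpha$ with $K\subseteq\Phi^+_\odd$. For the first: a sum of typical roots can a priori agree, after a Weyl twist, with an integral combination of mutually orthogonal atypical roots, so typicality of each summand does not by itself change the central character. The missing device is the coweight $h_{\mf l}$ defining $\mf p$ (with $\langle\alpha,h_{\mf l}\rangle=0$ exactly for $\alpha\in\Phi(\mf l)$ and $\langle\alpha,h_{\mf l}\rangle>0$ for $\alpha\in\Phi^+\setminus\Phi(\mf l)$), against which one squeezes: from $\mu=w(\la)-\sum_{\alpha\in K}\alpha$ and dominance of $\la$ one gets $\langle\mu,h_{\mf l}\rangle\le\langle w(\la),h_{\mf l}\rangle\le\langle\la,h_{\mf l}\rangle$, while the linkage principle gives $v(\mu)=\la+\sum_{\alpha\in S_\la}c_\alpha\alpha$ with $S_\la\subseteq\Phi(\mf l)$ by hypothesis, hence $\langle\mu,h_{\mf l}\rangle\ge\langle v(\mu),h_{\mf l}\rangle=\langle\la,h_{\mf l}\rangle$. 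The resulting equality forces $K\subseteq\Phi(\mf l)$ and $w\in W_{\mf l}$, whence $w=\mathrm{id}$, $i=0$, and $\mu=\la$. This two-sided inequality is exactly where the hypothesis on $\mf l$ enters, and it is absent from your sketch.

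A secondary gap: your reduction to a maximal proper Levi relies on upgrading \lemref{lem:nested:Euler} from Euler characteristics to individual derived functors; that upgrade is not established in the paper and would require its own spectral-sequence argument. It is also unnecessary: the constraint $\mu=w\left(\la-\sum_{\alpha\in J}\alpha+\rho_\even\right)-\rho_\even$ with $\ell(w)=i$ and $J\subseteq\Phi^+_\odd$ is available for an arbitrary parabolic (this is \cite[Lemma 1.3.3]{Ger}, applied after writing $L(\mf l,\la)=U(\mf l_\odd\cap\mf n_-)E$ with $E$ a sum of copies of the irreducible $\mf l_\even$-module of highest weight $\la$), so the argument can be run for general $\mf l$ directly.
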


\begin{proof}
First, we take a coweight $h_{\mf l}=\sum_{i=1}^nc_i h_i\in \h_\even$ satisfying
\begin{equation*}
\begin{cases}
c_i-c_{i+1}=0, & \text{if $\varepsilon_i -\varepsilon_{i+1}\in \Phi^+({\mf l}_\even)$},\\
c_i-c_{i+1}=1, & \text{if $\varepsilon_i -\varepsilon_{i+1}\not\in \Phi^+({\mf l}_\even)$},
\end{cases}
\end{equation*}
for $1\leq i\leq n-1$.
Then $\Phi({\mf l})=\{\,\alpha\in \Phi\,|\,\langle \alpha,h_{\mf l} \rangle=0\,\}$ and $\Phi({\mf p})=\{\,\alpha\in \Phi \,|\,\langle \alpha,h_{\mf l}\rangle\geq 0\,\}$, where $\Phi$, $\Phi({\mf l})$, and $\Phi({\mf p})$ denote the set of roots of $\G$, ${\mf l}$, and ${\mf p}$, respectively. Denote by $W_{\mf l}$ the Weyl group of ${\mf l}$.

Let $\mu\in\La^+$ and suppose that $L(\mu)$ occurs in $\mc L^{\G,\mf l}_i\left({\rm Ind}_{\mf p}^\G L(\mf l,\la)\right)$ for some $i\geq 0$. Now, observe that we have $L(\mf l,\la)=U(\mf{l}_\odd\cap\mf{n}_-)U(\mf{l}_\even\cap\mf{n}_-)W_\la=U(\mf{l}_\odd\cap\mf{n}_-)E$, where we recall that $\mf{n}_-$ is the opposite nilradical of $\mf{b}$, and $E$, as an $\mf{l}_\even$-module, is a direct sum of copies of the irreducible $\mf{l}_\even$-module of highest weight $\la$.
Thus, by \cite[Lemma 1.3.3]{Ger}, we have
\begin{equation}\label{eq:aux-1}
\mu=w\left(\la -\sum_{\alpha \in J}\alpha + \rho_\even\right) - \rho_\even,
\end{equation}
for some $w\in W$ with $\ell(w)=i$ and $J\subseteq\Phi^+_\odd$.
Since $\rho_\even=\rho_\odd$ and in the expression $-\sum_{\alpha \in J}\alpha + \rho_\odd$ every $\alpha\in\Phi^+_\odd$ appears exactly once with coefficient $\hf$ or $-\hf$, we have $w\left(-\sum_{\alpha \in J}\alpha + \rho_\odd\right) - \rho_\odd=-\sum_{\alpha\in K}\alpha$, for some $K\subseteq\Phi^+_\odd$, and hence
\begin{equation}\label{eq:aux-2}
\mu=w(\la) -\sum_{\alpha\in K}\alpha.
\end{equation}
Note that
$\la-w(\la)=\sum_{\alpha\in \Phi^+_\even}k_\alpha \alpha$ for some $k_\alpha\in \Z_+$ since $\la$ is a $\G_\even$-dominant weight. So we have
\begin{equation}\label{eq:aux-3}
\langle \mu,h_{\mf l}\rangle \leq\langle w(\la),h_{\mf l} \rangle \leq \langle \la,h_{\mf l} \rangle.
\end{equation}

Suppose that the degree of atypicality of $\la$ is $r$, and let $S_\la$ be a set of mutually orthogonal $r$ even positive roots atypical to $\la$. Since $\chi_\mu=\chi_\la$ by Proposition \ref{prop:same:central}(3), we have by the linkage principle for $\G$ (\cite{Sv}, see also \cite[Theorem 2.48]{CW})
$$v(\mu) = \la + \sum_{\alpha\in S_\la}c_\alpha\alpha,$$
for some $c_\alpha$ and $v\in W$.
Since $\langle v(\mu),h_{\mf l} \rangle \leq \langle \mu,h_{\mf l} \rangle$ as in \eqref{eq:aux-3}, we have
\begin{equation}\label{eq:aux-4}
\langle \mu,h_{\mf l} \rangle \geq \langle v(\mu),h_{\mf l} \rangle = \langle \la,h_{\mf l}\rangle + \sum_{\alpha\in S_\la}c_\alpha\langle \alpha,h_{\mf l}\rangle = \langle \la,h_{\mf l}\rangle.
\end{equation}
So we have
$\langle \mu,h_{\mf l}\rangle  =\langle w(\la),h_{\mf l} \rangle = \langle \la,h_{\mf l} \rangle$
by \eqref{eq:aux-3} and \eqref{eq:aux-4}, and $K\subset \Phi^+({\mf l})$.

Since $\langle w(\la),h_{\mf l} \rangle = \langle \la,h_{\mf l} \rangle$ and $\langle \la,\beta\rangle>0$ for $\beta\not\in \Phi^+({\mf l}_\even)$, we conclude that $w\in W_{\mf l}$, and then $J\subseteq \Phi_\odd^+(\mf l)$.
By construction of $\mu$ (cf.~\cite[Lemma 1.3.3]{Ger}), we see that $\la -\sum_{\alpha \in J}\alpha$ is a weight of $L({\mf l},\la)$ and ${\mf l}_\even$-dominant. Hence we have $w=id$, and in particular, $\mc{L}^{\G,\mf l}_i\left({\rm Ind}_{\mf p}^\G L(\mf l,\la)\right)=0$ for $i>0$. Finally, since $\mu=\la -\sum_{\alpha \in J}\alpha$ is a weight of $L({\mf l},\la)$ and $L(\mu)$ is a composition factor of ${\rm Ind}_{\mf p}^\G L(\mf l,\la)$ by Proposition \ref{prop:same:central}(1), we have $\mu=\la$. The proof completes.
\end{proof}

Let $\la\in\La^+$ and suppose that $\mf p$ is a parabolic subalgebra with Levi subalgebra $\mf l\cong\mf q(k)$ that contains every positive root atypical to $\la$. Recalling $\mf l(\la)$ and $\mf p(\la)$ from \eqref{llambda}, we conclude that $\mf l(\la)\subseteq\mf l$ and $\mf p(\la)\subseteq \mf p$.

Suppose that
\begin{align*}
\mc E^{\G,\mf l(\la)}\left({\rm Ind}_{\mf p(\la)}^\G W_\la\right)=\sum_{\mu}a_{\la\mu} L(\mu),\quad
\mc E^{\mf l,\mf l(\la)}\left( {\rm Ind}_{\mf{p}(\la)}^\mf l W_\la\right)=\sum_{\mu}a'_{\la\mu} L(\mf l,\mu).
\end{align*}
Since $\la\succcurlyeq\mu$ above, we observe that $\mf l$ also contains all positive root atypical to $\la$.
We compute, using Lemma \ref{lem:nested:Euler} and Proposition \ref{prop:typical1}
\begin{align*}
\sum_{\mu}a_{\la\mu}L(\mu)= & \mc E^{\G,\mf l(\la)}\left({\rm Ind}_{\mf p(\la)}^\G W_\la\right)= \mc E^{\G,\mf l}\left({\rm Ind}_{\mf p}^\G \mc E^{\mf l,\mf l(\la)}\left( {\rm Ind}_{\mf{p}(\la)}^\mf p W_\la\right)\right)\\
= & \mc E^{\G,\mf l}\left({\rm Ind}_{\mf p}^\G \mc \sum_{\mu}a'_{\la\mu}L(\mf l,\mu)\right)=\sum_{\mu}a'_{\la\mu} L(\mu).
\end{align*}
We conclude that
\begin{align}\label{aux:typ1}
a_{\la\mu}=a'_{\la\mu}.
\end{align}

The following is a crucial observation in the rank reduction algorithm in \cite[Theorem 3]{PS2}.

\begin{lem}\label{thm:PS:main}
Suppose that $\mf b\subseteq\mf p\subseteq\mf q\subseteq\G$ are parabolic subalgebras with Levi subalgebras $\mf h\subseteq\mf l\subseteq \mf k\subseteq\G$ respectively such that $\mf p\cap \mf k$ is the maximal parabolic subalgebra of $\mf k$ corresponding to removing the left-most node in the Dynkin diagram of $\mf k_{\bar 0}$. Then for $\la,\mu\in\La^+_\hZ$ and $i\ge 0$, we have
\begin{align*}
m^{\mf q,\mf p}_i(\la,\mu)= m^{\mf q,\mf p}_i\left(\la^\sharp,\mu^\sharp\right).
\end{align*}
\end{lem}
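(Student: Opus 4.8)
The plan is to reduce the statement to an explicit computation inside $\mf{k}\cong\mf{q}(m)$ for some $m$, where the parabolic $\mf{p}\cap\mf{k}$ is the maximal one obtained by deleting the left-most node of the Dynkin diagram of $\mf{k}_{\bar 0}$. In that situation $\mf{l}\cap\mf{k}\cong\mf{q}(1)\oplus\mf{q}(m-1)$, and the module ${\rm Ind}^{\mf q}_{\mf p}L(\mf l,\la)$ restricted to $\mf{k}$ is the $\mf{k}$-module parabolically induced from an irreducible $\mf{l}\cap\mf{k}$-module. Since $\la\in\La^+_{\hf+\Z}$, we have $\mf l(\la)=\mf h$ by the remark following \eqref{llambda}, so $W_\la$ is the relevant $\mf l(\la)$-module; on the $\mf q(1)$-factor the weight is a single nonzero half-integer $\la_1$, and on the $\mf q(m-1)$-factor it is $(\la_2,\dots,\la_m)$. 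The point is that the derived dual-Zuckerman functor $\mc L^{\mf k,\mf l}_i$ here is, up to the bar of parabolic induction, computing Kazhdan–Lusztig-type multiplicities in the rank-one reduction step, and these depend only on the relative positions of the weight entries, not on their absolute values — provided no entry becomes zero. This is exactly what the shift $\la\mapsto\la^\sharp$ of \eqref{sharp:map} guarantees: it adds $\pm\tfrac12$ to each part according to its sign, so it preserves the sign pattern, preserves which pairs $\la_i+\la_j$ vanish (atypicality), and preserves all order relations (Lemma~\ref{lem:partial order}, Proposition~\ref{prop:Bruhat PS-B}), while moving everything onto the integer lattice with no zero parts, i.e.\ into $\La_{\Z^\times}$.

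Concretely, I would proceed as follows. First, by Proposition~\ref{prop:typical1} and the argument leading to \eqref{aux:typ1}, the multiplicity $m^{\mf q,\mf p}_i(\la,\mu)$ is unchanged if we enlarge $\mf p$ within $\mf q$ to any parabolic whose Levi still contains all roots atypical to $\la$; so without loss of generality we are in the rank-one reduction situation of \cite[Theorem 3]{PS2}. Second, I would record the weight combinatorics: under the standard identification of $\h^*_{\bar 0}$-weights of $\mf{q}(m)$ with sequences, $\mu$ occurring in $\mc L^{\mf k,\mf l}_i$ forces, via the character formula of Proposition~\ref{prop:same:central}(4) together with the linkage principle \cite{Sv} and the central-character constraint of Proposition~\ref{prop:same:central}(3), a relation of the form $\mu = w(\la)-\sum_{\alpha\in K}\alpha$ with $\ell(w)=i$ and $K$ a set of positive odd roots supported on the Levi, exactly as in the proof of Proposition~\ref{prop:typical1}; the resulting combinatorial bookkeeping of which $\mu$ can appear and with what sign is invariant under the map $\sharp$ because $\sharp$ is a bijection $\La_\hZ\to\La_{\Z^\times}$ intertwining the $W$-action (it commutes with permutations of the entries), intertwining addition of roots $\varepsilon_i-\varepsilon_j$ between entries of the same sign, and preserving atypicality and the Bruhat order. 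Third, I would identify the individual multiplicity $m^{\mf q,\mf p}_i$ in the rank-one step with the coefficient extracted from the Penkov–Serganova algorithm of \cite[Theorem 3]{PS2}, and observe that in that algorithm the only inputs are: the sign pattern of the entries, the set of atypical pairs, and the Bruhat order — all of which are preserved by $\sharp$ (Lemma~\ref{lem:partial order non-negativity}, Lemma~\ref{lem:partial order}, Corollary~\ref{cor:succcurlyeq = succeq}). Hence $m^{\mf q,\mf p}_i(\la,\mu)=m^{\mf q,\mf p}_i(\la^\sharp,\mu^\sharp)$.

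The main obstacle is the second and third steps: making precise that the rank-one reduction multiplicity is genuinely a function only of the sign-and-order data and not of the actual numerical values of the entries. One has to be careful that the central-character/linkage constraints behave compatibly under $\sharp$ — i.e.\ that $\chi_\mu=\chi_\la$ if and only if $\chi_{\mu^\sharp}=\chi_{\la^\sharp}$ — which again comes down to $\sharp$ preserving the multiset of atypical pairs $\{i<j : \la_i+\la_j=0\}$, but this needs to be checked against the explicit description of central characters in \cite{Sv} (cf.\ \cite[Section 2.3]{CW}), not merely the degree of atypicality. Once that compatibility is in hand, together with the already-proved order-preservation results, the equality of multiplicities follows because both sides are computed by running the identical algorithm of \cite[Theorem 3]{PS2} on identical combinatorial data. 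Everything else — the reduction to the rank-one situation and the weight-combinatorial identities — is routine given Proposition~\ref{prop:typical1}, Lemma~\ref{lem:nested:Euler}, and the Bruhat-order results of Section~3.
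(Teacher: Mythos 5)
Your proposal is correct and follows essentially the same route as the paper: both arguments reduce, via \eqref{aux:typ1}, to the Penkov--Serganova rank-reduction algorithm of \cite[Theorem 3]{PS2} and then observe that the algorithm runs in parallel on $(\la,\mu)$ and $(\la^\sharp,\mu^\sharp)$ with identical steps and terminal values, because $\sharp$ preserves the sign pattern, the atypical pairs, and the Bruhat order, and because no zero parts (the problematic case \cite[(1.11)]{PS2}) can arise on either side. The paper is merely more explicit about which terminal cases of \cite[Theorems 3 and 4]{PS2} correspond under $\sharp$, which is the detail your ``second and third steps'' would have to supply.
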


\begin{proof}
In light of \eqref{aux:typ1} we only need to consider weights of the form that appear in \cite[Theorem 3]{PS2}.

We apply the reduction algorithm \cite[Theorem 3]{PS2} to both $m^{\mf q,\mf p}_i(\la,\mu)$ and $m^{\mf q,\mf p}_i\left(\la^\sharp,\mu^\sharp\right)$, and compare their values.
Note that in either reduction algorithm the case \cite[(1.11)]{PS2} cannot occur.
Applying \cite[Theorem 3]{PS2} once to $m^{\mf q,\mf p}_i(\la,\mu)$, we have
\begin{equation}\label{eq:reduction-1}
m^{\mf q,\mf p}_i(\la,\mu)= m^{\mf q',\mf p'}_{i'}(\la',\mu'),\\
\end{equation}
for some ${\mf p}'$ and ${\mf q}'$ with ranks smaller than those of ${\mf p}$ and ${\mf q}$, respectively, and some $i'\geq 0$. Moreover, applying \cite[Theorem 3]{PS2} to $m^{\mf q,\mf p}_i\left(\la^\sharp,\mu^\sharp\right)$ gives
\begin{equation}\label{eq:reduction-2}
m^{\mf q,\mf p}_i(\la^\sharp,\mu^\sharp)= m^{\mf q',\mf p'}_{i'}((\la')^\sharp,(\mu')^\sharp),
\end{equation}
with the same $\mf p'$, $\mf q'$, and $i'$ as in \eqref{eq:reduction-1}.
Now, we apply the reduction algorithm as far as possible so that $m^{\mf q',\mf p'}_{i'}(\la',\mu')$ corresponds to either \cite[Theorem 3(a)(1.8)]{PS2} or \cite[Theorem 4(c)]{PS2}, which is equivalent to the case when $m^{\mf q',\mf p'}_{i'}((\la')^\sharp,(\mu')^\sharp)$ corresponds to \cite[Theorem 3(a)(1.8)]{PS2} or \cite[Theorem 4(b)(1.16)]{PS2} with no zeroes in $(\la')^\sharp$ and $(\mu')^\sharp$, respectively. In this case, we can also see that
\begin{equation}\label{eq:reduction-3}
m^{\mf q',\mf p'}_{i'}(\la',\mu')= m^{\mf q',\mf p'}_{i'}((\la')^\sharp,(\mu')^\sharp)\in \{\,0,1\,\}.
\end{equation}
Therefore, it follows from \eqref{eq:reduction-1}--\eqref{eq:reduction-3} that
$m^{\mf q,\mf p}_i(\la,\mu)= m^{\mf q,\mf p}_i(\la^\sharp,\mu^\sharp)$.
\end{proof}

\begin{thm}\label{thm:same:mult}
For $\la,\mu\in\La^+_\hZ$, we have
\begin{align*}
a_{\la\mu}=a_{\la^\sharp\mu^\sharp}.
\end{align*}
\end{thm}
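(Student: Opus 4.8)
The plan is to reduce the equality $a_{\la\mu}=a_{\la^\sharp\mu^\sharp}$ to the multiplicity identity established in \lemref{thm:PS:main} by running the iterated-Euler-characteristic computation of Section 4 in parallel for the half-integer weight $\la$ and for its integer shift $\la^\sharp$. First I would fix $\la\in\La^+_\hZ$ and, since $\mf l(\la)=\mf h$ for such $\la$ (as noted after \eqref{llambda}), recall that $E(\la)=\mc E^{\G,\mf h}({\rm Ind}_{\mf b}^\G W_\la)$ while by definition $E(\la^\sharp)=\mc E^{\G,\mf l(\la^\sharp)}({\rm Ind}_{\mf p(\la^\sharp)}^\G W_{\la^\sharp})$. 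The key point to observe is that $\la^\sharp$ has no zero parts, so $\Phi^+(\la^\sharp)=\{\varepsilon_i-\varepsilon_j\mid \la^\sharp_i=\la^\sharp_j\}$ consists only of roots among equal \emph{nonzero} coordinates; by \eqref{aux:typ1} we may replace $E(\la^\sharp)$ by the Euler characteristic starting from the full Borel, i.e. work with $\mc E^{\G,\mf h}({\rm Ind}_{\mf b}^\G W_{\la^\sharp})$ as well, at the cost of only changing which $L(\mu)$ appear, not the coefficients $a_{\la^\sharp\mu^\sharp}$ for $\mu^\sharp$ with no zero parts. (Here one uses that $\mu\in\La^+_\hZ$ forces $\mu^\sharp\in\La^+_{\Z^\times}$.)

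Next I would choose the \emph{same} sequence of parabolic subalgebras \eqref{seq:parabolic}, namely $\mf p(\la)=\mf b=\mf p_0\subseteq\mf p_1\subseteq\cdots\subseteq\mf p_k=\G$ where at each step $\mf p_{s-1}\cap\mf l_s$ is the maximal parabolic of $\mf l_s$ obtained by deleting the left-most node of the Dynkin diagram of $(\mf l_s)_{\bar 0}$ — exactly the configuration required by the hypotheses of \lemref{thm:PS:main}. Applying formula \eqref{formula:a} to both $a_{\la\mu}$ and $a_{\la^\sharp\mu^\sharp}$ gives
\begin{align*}
a_{\la\mu}=\sum_{\substack{i_1,\ldots,i_k\ge 0\\ \la^{(1)},\ldots,\la^{(k-1)}}}(-1)^{i_1+\cdots+i_k}\prod_{s=1}^k m^{s,s-1}_{i_s}(\la^{(s-1)},\la^{(s)}),
\end{align*}
and similarly for $a_{\la^\sharp\mu^\sharp}$ with each $\la^{(s)}$ replaced by $(\la^{(s)})^\sharp$ (setting $\la^{(0)}=\la$, $\la^{(k)}=\mu$). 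The main technical claim is then a term-by-term matching: for each intermediate weight $\la^{(s-1)}$ appearing with nonzero contribution, $m^{s,s-1}_{i_s}(\la^{(s-1)},\la^{(s)})=m^{s,s-1}_{i_s}((\la^{(s-1)})^\sharp,(\la^{(s)})^\sharp)$ by \lemref{thm:PS:main}, and moreover the indexing sets of intermediate weights correspond bijectively under $\sharp$. The bijection of indexing sets is where \lemref{lem:partial order non-negativity} and \lemref{lem:partial order} do the work: any $\la^{(s)}$ contributing must satisfy $\la\succcurlyeq\la^{(s)}$ (more precisely the relevant $\G_{\bar 0}$-dominant weights are linked to $\la$), and since $\la^\sharp$ has no zero parts, \lemref{lem:partial order non-negativity} forces every $\eta$ with $\la^\sharp\succcurlyeq\eta$ to lie in $\La^\geqslant_{\Z^\times}$, hence $\eta=\nu^\sharp$ for a unique half-integer weight $\nu$; this is precisely the mechanism already exploited in the proof of \lemref{lem:partial order}.

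I expect the main obstacle to be the careful bookkeeping that the intermediate weights $\la^{(s)}$ genuinely stay in the ``no zero part after $\sharp$'' regime at every stage, so that \lemref{thm:PS:main} applies verbatim at each step — in other words, verifying that the reduction never produces an intermediate weight whose $\sharp$-image acquires a spurious zero coordinate, and that the parity/sign pattern $(-1)^{i_s}$ is preserved. This should follow from the fact that $\sharp$ only shifts each coordinate by $\pm\tfrac12$ according to its sign and never changes a sign, combined with the observation (used already in \lemref{lem:partial order}) that the steps $\nu_{(i)}\mapsto\nu_{(i)}+(\varepsilon_i-\varepsilon_j)$ with $(\nu_{(i)},\varepsilon_i-\varepsilon_j)=0$ force $\nu_i>0>\nu_j$, so the coordinate at position $i$ stays $\geq\tfrac12$ and at $j$ stays $\leq-\tfrac12$ throughout. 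Once this invariance is in place, summing the matched products over the (bijective) indexing sets yields $a_{\la\mu}=a_{\la^\sharp\mu^\sharp}$ immediately. A secondary, lighter obstacle is confirming that the hypothesis of \lemref{thm:PS:main} (half-integer weights) is literally met at each stage, i.e. that the Levi $\mf l_s$ is of the form $\mf q(k)$ and $\la^{(s)}$ restricts to a half-integer dominant weight there; this is ensured by the choice of the standard sequence \eqref{seq:parabolic} and by \eqref{aux:typ1}, which lets us ignore the typical directions.
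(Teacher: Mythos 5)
Your strategy is the paper's own: fix the standard chain of parabolic subalgebras \eqref{seq:parabolic} with each $\mf p_{s-1}\cap\mf l_s$ maximal (left-most node removed), expand $a_{\la\mu}$ and $a_{\la^\sharp\mu^\sharp}$ via \eqref{formula:a}, and match the summands using \lemref{thm:PS:main}. Your extra bookkeeping --- checking that the intermediate weights in the two sums correspond bijectively under $\sharp$ --- is exactly the point the paper leaves implicit, and it is the right thing to worry about. (Incidentally, the detour through \eqref{aux:typ1} in your first paragraph is unnecessary: any $\la\in\La^+_\hZ$ has pairwise distinct nonzero entries, hence so does $\la^\sharp$, so $\Phi^+(\la^\sharp)=\emptyset$ and $\mf p(\la^\sharp)=\mf b$ to begin with.)

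One step is wrong as written, though easily repaired. You assert that ``since $\la^\sharp$ has no zero parts, \lemref{lem:partial order non-negativity} forces every $\eta$ with $\la^\sharp\succcurlyeq\eta$ to lie in $\La^\geqslant_{\Z^\times}$.'' That lemma says $\La^\geqslant_{\Z^\times}$ is an \emph{increasing} subset, so absence of zero parts propagates \emph{upward} in the Bruhat order, not downward; the downward implication is false (for $n=2$ one has $\varepsilon_1-\varepsilon_2\succcurlyeq 0$, with $\varepsilon_1-\varepsilon_2\in\La^\geqslant_{\Z^\times}$ but $0\notin\La^\geqslant_{\Z^\times}$). The correct argument --- the one actually used in the proof of \lemref{lem:partial order} --- anchors at the bottom: any intermediate weight $\eta$ contributing to \eqref{formula:a} for $a_{\la^\sharp\mu^\sharp}$ satisfies $\eta\succcurlyeq\mu^\sharp$ (by transitivity along the chain of nonzero multiplicities, via \propref{prop:same:central}(3) and the linkage principle), and since $\mu^\sharp\in\La^+_{\Z^\times}$, \lemref{lem:partial order non-negativity} then gives $\eta\in\La^\geqslant_{\Z^\times}$, hence $\eta=\nu^\sharp$ for a unique $\nu\in\La^+_\hZ$. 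With that single correction your term-by-term matching goes through and your proof coincides with the paper's.
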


\begin{proof}
We fix the sequence of parabolic subalgebras of $\G$ of the form \eqref{seq:parabolic} such that, for each $s$, $\mf p_{s-1}\cap \mf l_{s}$ is the maximal parabolic subalgebra of $\mf l_s$ corresponding to the removal of the left-most node of the Dynkin diagram of $(\mf l_s)_{\bar 0}$. Now we apply Lemma \ref{thm:PS:main} to \eqref{formula:a}.
\end{proof}

\section{Brundan-Kazhdan-Lusztig theory}

Let us first briefly recall the results in \cite{Br2}.
Let $q$ be an indeterminate. Let ${\mscr U}=U_q(\mf b_\infty)$ be the quantum group associated to the Lie algebra of type $\mf b_\infty$, and let $\mscr V$ be its natural representation with basis $\{\,v_a\,|\,a\in\Z \,\}$. Fix $n\geq 1$. The space $\mscr T^n:=\mscr V^{\otimes n}$ is naturally a $\mscr U$-module with standard monomial basis $\{\,N_\la:=v_{\la_1}\otimes \cdots\otimes v_{\la_n}\,|\,\la=\sum_{i=1}^n\la_i\varepsilon_i\in\La_\Z\,\}$. For $\la\in \La_\Z$, set $M_\la:=(q+q^{-1})^{z(\la)}N_\la$, where $z(\la)=n-\ell(\la)$.
We can define a topological completion $\widehat{\mscr T}^{ n}$, compatible with the Bruhat ordering, on which one has a bar involution $-$. There exist unique bar-invariant topological bases called the {\em canonical} and {\em dual canonical basis}, denoted by $\{\,T_\la\,|\,\la\in\La_\Z\,\}$ and $\{\,L_\la\,|\,\la\in\La_\Z\,\}$, respectively \cite[Theorem 2.22]{Br2}. We have
\begin{align*}
T_\la=\sum_{\mu}t_{\mu\la}(q)N_\mu,\quad
L_\la=\sum_{\mu}\ell_{\mu\la}(q)M_\mu,
\end{align*}
for some polynomials $t_{\mu\la}(q)\in\Z[q]$ and $\ell_{\mu\la}(q)\in\Z[q^{-1}]$ such that $t_{\mu\la}(q)=\ell_{\mu\la}(q)=0$, unless $\la\succeq\mu$ and $t_{\la\la}(q)=\ell_{\la\la}(q)=1$.

 Let
\begin{align*}
\mscr F^n:=\bigwedge^n \mscr V
\end{align*}
be the $q$-deformed $n$th exterior power of $\mscr V$ of type $\mf b_\infty$ as in \cite[Section 3]{Br2}, which is a $\mscr U$-module. This exterior module was first constructed in \cite{JMO}. The space $\mscr F^n$ has a basis $\{\,F_\la:=v_{\la_n}\wedge \cdots\wedge v_{\la_1}\,|\,\la\in\La^+_\Z\,\}$, and it admits a topological completion $\widehat{\mscr F}^n$. The bar involution on $\widehat{\mscr T}^{n}$ induces a bar involution on $\widehat{\mscr F}^n$, and there exists a unique bar-invariant topological basis $\{\,U_\la\,|\,\la\in\La^+_\Z\,\}$ called the {\em canonical basis} \cite[Theorem 3.5]{Br2}, where we have
\begin{align*}
U_\la=\sum_{\mu\in\La^+_\Z} u_{\mu\la}(q) F_\mu,
\end{align*}
for some $u_{\mu\la}\in\mathbb \Z[q]$ such that $u_{\mu\la}(q)\not=0$ unless $\mu\succeq\la$ and $u_{\la\la}(q)=1$. If $\pi : \mscr T^n \rightarrow \mscr F^n$ is the canonical projection map, then we have $\pi(T_{w_0\la})=U_\la$ if $\la\in \La^+_\Z$, and $0$ otherwise, where $w_0$ is the longest element in $W$.


For $\la\in\La^{+}_\Z$ we define
\begin{align}\label{eq:basis-E}
E_\la:=\sum_{\mu\in\La^+_\Z,\, \la\succeq \mu}u_{-w_0\la,-w_0\mu}(q^{-1}) L_\mu,
\end{align}
and let
\begin{equation*}
\mscr E^n:=\sum_{\la\in\La^+_\Z}\mathbb{Q}(q) E_\la \subset \widehat{\mscr T}^n.
\end{equation*}
One can show \cite[Theorem 3.16]{Br2} that $\{\,L_\la\,|\,\la\in\La^+_\Z\,\}$ is the unique bar-invariant basis of $\mscr E^n$, and $L_\la=\sum_{\mu\in\La^+_\Z}\ell_{\mu\la}(q) E_\mu$ for $\la\in\La^+_\Z$. Put
\begin{equation}\label{eq:Z-form of E}
\begin{split}
&\mscr E^n_{\Z[q,q^{-1}]}:=\sum_{\la\in \La^+_\Z}\Z[q,q^{-1}]L_\la=\sum_{\la\in \La^+_\Z}\Z[q,q^{-1}]E_\la,\\
&\mscr E^n_{\Z}:=\Z\otimes_{\Z[q,q^{-1}]}\mscr E^n_{\Z[q,q^{-1}]},
\end{split}
\end{equation}
where $\Z$ is the right $\Z[q,q^{-1}]$-module with $q$ acting on $\Z$ as $1$.  Let $E_\la(1)=1\otimes E_\la$ and $L_\la(1)=1\otimes L_\la\in \mscr E^n_{\Z}$ for $\la\in \La^+_\Z$.
The following is the main result in \cite{Br2}:

\begin{thm}[Theorem 4.52 in \cite{Br2}]\label{thm:Brundan Main} Let {\rm $\Psi:K(\mc O^{\texttt{fin}}_\Z) \rightarrow \mscr E^{n}_\Z$} be the $\Z$-linear isomorphism defined by
\begin{align*}
[E (\la)]\longmapsto E_{\la}(1) \quad \left(\la\in\La^+_\Z\right).
\end{align*}
Then $\Psi\left([L(\la)]\right)={L}_{\la}(1)$, for all $\la\in\La^+_\Z$.
\end{thm}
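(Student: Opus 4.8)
The plan is to reduce the statement $\Psi([L(\la)])=L_\la(1)$ to the single combinatorial identity $a_{\la\mu}=u_{-w_0\la,\,-w_0\mu}(1)$ for all $\la,\mu\in\La^+_\Z$, where $a_{\la\mu}$ is as in \eqref{E:in:L} and the $u_{\cdot\,\cdot}(q)$ are the coefficients of the canonical basis $\{U_\la\}$ of $\mscr F^n$. Granting the identity, one applies the $\Z$-linear isomorphism $\Psi$ to \eqref{E:in:L} and compares with the $q=1$ specialization of \eqref{eq:basis-E}: on one hand $E_\la(1)=\sum_\mu u_{-w_0\la,-w_0\mu}(1)\,L_\mu(1)$, and on the other $E_\la(1)=\Psi([E(\la)])=\sum_\mu a_{\la\mu}\,\Psi([L(\mu)])$; subtracting, $\sum_\mu a_{\la\mu}\bigl(\Psi([L(\mu)])-L_\mu(1)\bigr)=0$ for every $\la\in\La^+_\Z$. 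Since $(a_{\la\mu})$ is unitriangular for the Bruhat order $\succcurlyeq$ (cf.\ \cite{PS2}), hence invertible over $\Z$, this forces $\Psi([L(\mu)])=L_\mu(1)$ for all $\mu$.

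It thus remains to prove the combinatorial identity, and I would do so by matching two recursions. On the representation side, $a_{\la\mu}$ is governed by the Penkov--Serganova rank-reduction algorithm, which is precisely the content of formula \eqref{formula:a}: fixing the flag of parabolics \eqref{seq:parabolic} obtained by successively deleting the left-most node of each Levi, one writes $a_{\la\mu}$ in terms of the elementary multiplicities $m^{s,s-1}_i(\nu,\eta)$, and by the typical lemma (Proposition~\ref{prop:typical1}), the identity \eqref{aux:typ1}, and the reduction rules of \cite[Theorem~3]{PS2}, these multiplicities all lie in $\{0,1\}$ and are read off from the atypical roots of $\nu$. On the quantum-group side, the canonical basis $\{U_\la\}$ of $\mscr F^n=\bigwedge^n\mscr V$ is computed by a parallel Lascoux--Leclerc--Thibon-type recursion for the type-$\mf b_\infty$ exterior power as in \cite[Section~3]{Br2} (cf.\ \cite{JMO}): one applies divided powers of the Chevalley generators $f_i$ of $\mscr U$ to canonical basis vectors already known to equal standard basis vectors, then straightens using the commutation and Serre relations, the base case being $U_\la=F_\la$ whenever no two parts of $\la$ sum to $0$. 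The identity then follows once one checks that the two recursions agree step by step: deleting the left-most Dynkin node and inducing corresponds to applying a Chevalley generator; the typical/atypical dichotomy of \cite[Theorem~3]{PS2} corresponds to whether the relevant $f_i$ contributes a single ``arrow'' or forces a genuine straightening; and the two base cases coincide because a weight $\la\in\La^+_\Z$ is typical for $\mf q(n)$ exactly when $\la_i+\la_j\neq0$ for all $i<j$, which is the condition $U_\la=F_\la$ in $\mscr F^n$.

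The crux, and the main obstacle, is this last matching, and three points need care. First, the weight dictionary: Penkov--Serganova use $\mf q(n)$-weights $\la=\sum_i\la_i\ep_i$ with the atypicality pairing $\la_i+\la_j$, whereas Brundan's Fock space is graded by the type-$\mf b_\infty$ root lattice with its distinguished simple root $-\delta_1$, so one must arrange that atypicality matches the action of the special generator $f_0$ (for the simple root $-\delta_1$) and that the $w_0$-twist in \eqref{eq:basis-E} is correctly absorbed. Second, the normalizations: the factor $2^{\lceil\ell(\la)/2\rceil}=\dim W_\la$ appearing in \eqref{eq:Euler character} must be reconciled with the rescaling $M_\la=(q+q^{-1})^{z(\la)}N_\la$, so that the two sides carry matching normalizations at $q=1$. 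Third, termination: one must verify that the two recursions halt after the same number of steps and reach the same typical (respectively generic) weights, with the degree of atypicality of $\la$ serving as the common measure of recursion depth. Once these are in place, the identity, and hence the theorem, follows. (A more structural route would be to lift the whole picture to a graded category --- via Schur--Weyl duality between $\mf q(n)$ and cyclotomic Hecke--Clifford superalgebras --- so that $K(\mc O^{\texttt{fin}}_\Z)$ becomes a genuine $U_q(\mf b_\infty)$-module isomorphic to $\mscr E^n$ and $[L(\la)]$ is pinned down as the dual canonical basis by bar-invariance plus unitriangularity; but the algorithm-matching argument stays closest to the machinery developed above.)
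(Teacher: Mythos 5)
This statement is not proved in the paper at all: it is Theorem 4.52 of \cite{Br2}, quoted verbatim and used as a black box, so the paper's ``proof'' is the citation. Your attempt is therefore really being measured against Brundan's original argument. Your first reduction is correct and clean: granting the identity $a_{\la\mu}=u_{-w_0\la,-w_0\mu}(1)$, comparing \eqref{E:in:L} with the $q=1$ specialization of \eqref{eq:basis-E} and invoking unitriangularity of $(a_{\la\mu})$ with respect to $\succcurlyeq$ does force $\Psi([L(\mu)])=L_\mu(1)$ for all $\mu$.

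The gap is that this combinatorial identity is the entire content of the theorem, and you leave it as a program rather than a proof. ``Checking that the two recursions agree step by step'' is precisely the hard part of Brundan's Section 4, and the dictionary you propose is not correct as stated: the Penkov--Serganova reduction passes from $\mf q(k)$ to $\mf q(k-1)$ by stripping off a coordinate, which on the Fock-space side corresponds to peeling a tensor factor off $\mscr V^{\otimes n}$ (equivalently, to the relation between $\mscr F^n$ and $\mscr V\otimes\mscr F^{n-1}$), not to applying a single Chevalley generator $f_i$ inside $\mscr F^n$; the divided powers of the $f_i$ enter Brundan's argument through translation functors (tensoring with the natural $\mf q(n)$-module and projecting onto blocks), which is a different mechanism and requires proving that these functors categorify the $U(\mf b_\infty)$-action on $\mscr E^n$ --- none of which appears in your sketch. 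The three ``points needing care'' you list (the $w_0$-twist and the special simple root $-\delta_1$, reconciling $2^{\lceil \ell(\la)/2\rceil}$ with $(q+q^{-1})^{z(\la)}$, and termination of the two recursions) are exactly where the real work lies, and they are flagged rather than resolved. As written, you have established the easy implication but not the theorem; in the context of this paper the appropriate move is simply to cite \cite{Br2}.
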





Now we consider $K\left(\mc O^{\texttt{fin}}_\hZ\right)$. Let
\begin{align*}
\mscr E^{n,\times}:=\mscr E^n/\mscr E^{n,0},
\end{align*}
where $\mscr E^{n,0}$ is the subspace of $\mscr E^{n}$ spanned by $\{\,E_\la\,|\,\la\in\La^{+}_\Z\setminus\La^{+}_{\Z^\times}\,\}$.

\begin{lem}\label{lem:bar invariance of E^{n,0}}
The space $\mscr E^{n,0}$ is bar-invariant. Hence the bar involution on $\mscr E^{n}$ induces a bar involution on $\mscr E^{n,\times}$.
\end{lem}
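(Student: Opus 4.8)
The plan is to show that the defining spanning set $\{E_\la \mid \la \in \La^+_\Z \setminus \La^+_{\Z^\times}\}$ of $\mscr E^{n,0}$ is permuted (up to unimodular triangular change of basis) by the bar involution, which suffices since the bar involution is $\mathbb{Q}(q)$-antilinear in a controlled way. First I would recall from \eqref{eq:basis-E} that $E_\la = \sum_{\mu \in \La^+_\Z,\ \la \succeq \mu} u_{-w_0\la,-w_0\mu}(q^{-1}) L_\mu$, and that conversely $L_\la = \sum_{\mu} \ell_{\mu\la}(q) E_\mu$ with the sum over $\mu$ with $\la \succeq \mu$. The key combinatorial input is \lemref{lem:partial order non-negativity} together with \propref{prop:Bruhat PS-B}: the set $\La^{\geqslant}_{\Z^\times}$ — and hence its intersection $\La^+_{\Z^\times}$ with $\La^+$ — is an \emph{increasing} subset of $(\La^{\geqslant}_\Z, \succeq)$. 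Equivalently, the complement $\La^+_\Z \setminus \La^+_{\Z^\times}$ (weights with at least one zero part) is a \emph{decreasing} (downward-closed) subset: if $\la \succeq \mu$ and $\la$ has a zero part, then so does $\mu$.

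Next I would use this downward-closedness to see that $\mscr E^{n,0}$ has, as an alternative spanning set, $\{L_\mu \mid \mu \in \La^+_\Z \setminus \La^+_{\Z^\times}\}$. Indeed, writing $E_\la = \sum_{\la \succeq \mu} u_{-w_0\la,-w_0\mu}(q^{-1}) L_\mu$ for $\la$ with a zero part: every $\mu$ appearing satisfies $\la \succeq \mu$, hence $\mu$ also has a zero part, so $E_\la$ lies in $\mathrm{span}\{L_\mu \mid \mu \notin \La^+_{\Z^\times}\}$; the reverse inclusion follows symmetrically from $L_\la = \sum_{\la \succeq \mu} \ell_{\mu\la}(q) E_\mu$, since each such $\mu$ again has a zero part. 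Because the matrices $(u_{-w_0\la,-w_0\mu}(q^{-1}))$ and $(\ell_{\mu\la}(q))$ are unitriangular with respect to $\succeq$, these two spanning sets of $\mscr E^{n,0}$ coincide. (One should be careful that $-w_0$ preserves $\La^+_{\Z^\times}$ and reverses $\succeq$ appropriately — since $-w_0$ acts by $\la_i \mapsto -\la_{n+1-i}$, it sends weights with a zero part to weights with a zero part, so this causes no trouble.)

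Then I would invoke the defining property of the dual canonical basis $\{L_\mu\}$: it is the unique bar-invariant basis of $\mscr E^n$ with the stated triangularity, so in particular each $L_\mu$ is bar-invariant, $\overline{L_\mu} = L_\mu$. Therefore $\mscr E^{n,0} = \mathrm{span}_{\mathbb{Q}(q)}\{L_\mu \mid \mu \in \La^+_\Z \setminus \La^+_{\Z^\times}\}$ is spanned by bar-invariant elements. Since the bar involution on $\mscr E^n$ is additive and satisfies $\overline{f(q)x} = f(q^{-1})\overline{x}$ for $f \in \mathbb{Q}(q)$, it preserves any subspace spanned by bar-invariant vectors; hence $\overline{\mscr E^{n,0}} \subseteq \mscr E^{n,0}$, and bar being an involution gives equality. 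Consequently the bar involution descends to a well-defined involution on the quotient $\mscr E^{n,\times} = \mscr E^n/\mscr E^{n,0}$ by $\overline{x + \mscr E^{n,0}} := \overline{x} + \mscr E^{n,0}$.

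The main obstacle — really the only nontrivial point — is establishing that $\mscr E^{n,0}$ is spanned by the $L_\mu$ with $\mu$ having a zero part, which hinges entirely on the downward-closedness of $\La^+_\Z \setminus \La^+_{\Z^\times}$ under $\succeq$; once \lemref{lem:partial order non-negativity} and \propref{prop:Bruhat PS-B} are in hand this is immediate, and everything else is formal. A reader might worry whether one must instead argue directly that $\overline{E_\la} \in \mscr E^{n,0}$ for $\la$ with a zero part; that route also works (expand $E_\la$ in the $L_\mu$, apply bar termwise using bar-invariance of the $L_\mu$, and note all indices stay in the bad set), and it is perhaps the cleanest to write down, so I would present that version.
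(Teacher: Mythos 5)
Your proof is correct and is essentially the paper's argument: the paper's one-line proof also observes, via \eqref{eq:basis-E} and the downward-closedness of $\La^+_\Z\setminus\La^+_{\Z^\times}$ under $\succeq$ (Lemma \ref{lem:partial order non-negativity} with Proposition \ref{prop:Bruhat PS-B}), that $\mscr E^{n,0}$ is spanned by the bar-invariant elements $\{L_\la \mid \la\in\La^+_\Z\setminus\La^+_{\Z^\times}\}$. You have simply written out the unitriangularity details that the paper leaves implicit.
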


\begin{proof}
By \eqref{eq:basis-E}, we see that the space $\mscr E^{n,0}$ is spanned by $\{\,L_\la\,|\,\la\in\La^{+}_\Z\setminus\La^{+}_{\Z^\times}\,\}$, which proves the claim.
\end{proof}

For $\la\in \La^{+}_{\Z^\times}$, put
\begin{equation*}
{\bf E}_\la:=\pi (E_\la), \quad {\bf L}_\la:=\pi(L_\la),
\end{equation*}
where $\pi: \mscr E^n \rightarrow \mscr E^{n,\times}$ is the canonical projection.

\begin{prop}
The set $\left\{\,{\bf L}_\la\,|\,\la\in \La^{+}_{\Z^\times}\,\right\}$ is the unique basis of $\mscr E^{n,\times}$ such that $\overline{\bf L_\la}={\bf L}_\la$ and ${\bf L}_\la\in {\bf E}_\la +\sum_{\la\succeq \mu, \la\neq \mu}q^{-1}\Z[q^{-1}]{\bf E}_\mu$ for $\la\in \La^{+}_{\Z^\times}$.
\end{prop}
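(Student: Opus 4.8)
The plan is to produce the basis explicitly by setting $\mathbf L_\la:=\pi(L_\la)$ for $\la\in\La^+_{\Z^\times}$, verify the two stated properties, and then obtain uniqueness by the standard Kazhdan--Lusztig argument. Since $\{E_\mu\mid\mu\in\La^+_\Z\}$ is a basis of $\mscr E^n$ and $\mscr E^{n,0}$ is the $\mathbb Q(q)$-span of $\{E_\mu\mid\mu\in\La^+_\Z\setminus\La^+_{\Z^\times}\}$, the images $\{\mathbf E_\la\mid\la\in\La^+_{\Z^\times}\}$ form a basis of $\mscr E^{n,\times}$. I recall from \cite[Theorem~3.16]{Br2} that $L_\la=\sum_\mu\ell_{\mu\la}(q)E_\mu$, a finite sum, with $\ell_{\la\la}(q)=1$, $\ell_{\mu\la}(q)\in q^{-1}\Z[q^{-1}]$ for $\mu\neq\la$, and $\ell_{\mu\la}(q)=0$ unless $\la\succeq\mu$. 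Applying $\pi$ and discarding the terms indexed by $\mu\notin\La^+_{\Z^\times}$ yields $\mathbf L_\la=\mathbf E_\la+\sum_{\la\succeq\mu,\,\mu\neq\la}\ell_{\mu\la}(q)\mathbf E_\mu$, which is exactly the triangularity required of $\mathbf L_\la$. Bar-invariance $\overline{\mathbf L_\la}=\mathbf L_\la$ then follows at once from $\overline{L_\la}=L_\la$ together with \lemref{lem:bar invariance of E^{n,0}}, which guarantees that $\pi$ intertwines the bar involutions. Finally, the transition matrix from $\{\mathbf L_\la\}$ to $\{\mathbf E_\la\}$ is unitriangular with respect to the Bruhat order restricted to $\La^+_{\Z^\times}$, which has finite lower intervals (as in \cite{Br2}), hence it is invertible over $\Z[q,q^{-1}]$, so $\{\mathbf L_\la\mid\la\in\La^+_{\Z^\times}\}$ is indeed a basis of $\mscr E^{n,\times}$.

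For uniqueness I would let $\{\mathbf L'_\la\}$ be any basis of $\mscr E^{n,\times}$ with the same two properties and expand $\mathbf L'_\la=\sum_{\mu}c_{\mu\la}(q)\mathbf L_\mu$. Because $\{\mathbf L_\mu\}$ and $\{\mathbf L'_\mu\}$ are both unitriangular over $\{\mathbf E_\mu\}$ with respect to $\succeq$, the matrix $(c_{\mu\la})$ is unitriangular: $c_{\mu\la}(q)=0$ unless $\la\succeq\mu$, and $c_{\la\la}(q)=1$. Applying the bar involution to $\mathbf L'_\la=\sum_\mu c_{\mu\la}\mathbf L_\mu$ and using $\overline{\mathbf L'_\la}=\mathbf L'_\la$ and $\overline{\mathbf L_\mu}=\mathbf L_\mu$ gives $\overline{c_{\mu\la}(q)}=c_{\mu\la}(q)$ for all $\mu,\la$. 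Suppose now that $c_{\mu\la}\neq0$ for some $\mu\neq\la$, and choose $\mu_0$ maximal for $\succeq$ among such $\mu$. Comparing the coefficients of $\mathbf E_{\mu_0}$ on the two sides of $\mathbf L'_\la-\mathbf L_\la=\sum_{\la\succeq\mu,\,\mu\neq\la}c_{\mu\la}(q)\mathbf L_\mu$: on the left this coefficient lies in $q^{-1}\Z[q^{-1}]$ (both $\mathbf L'_\la$ and $\mathbf L_\la$ satisfy the triangularity property), while on the right, by the maximality of $\mu_0$ and the unitriangularity of $\{\mathbf L_\mu\}$ over $\{\mathbf E_\mu\}$, it equals $c_{\mu_0\la}(q)$. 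Thus $c_{\mu_0\la}(q)$ is a bar-invariant element of $q^{-1}\Z[q^{-1}]$, hence $c_{\mu_0\la}(q)=0$, a contradiction. Therefore $\mathbf L'_\la=\mathbf L_\la$ for every $\la$.

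The argument is essentially formal: it is the transcription, for the quotient $\mscr E^{n,\times}$, of the uniqueness part of \cite[Theorem~3.16]{Br2}, and more generally of the standard mechanism producing Kazhdan--Lusztig-type bases from a bar involution and a triangularity condition. The only point demanding any care --- and the closest thing to an obstacle --- is to confirm that the combinatorial bookkeeping survives passage to the quotient: one needs the Bruhat order on $\La^+_{\Z^\times}$ to have finite lower intervals (so that the unitriangular matrices above can be inverted and a $\succeq$-maximal nonzero coefficient extracted in the induction), and one needs $\mscr E^{n,0}$ to be the span of standard basis elements $E_\mu$; the former is inherited from \cite{Br2} since $\La^+_{\Z^\times}\subseteq\La^+_\Z$, and the latter is exactly the content of \lemref{lem:bar invariance of E^{n,0}}.
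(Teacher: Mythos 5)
Your proof is correct and follows essentially the same route as the paper: existence comes from projecting $L_\la$ and using the bar-invariance of $\mscr E^{n,0}$ together with the unitriangularity of $L_\la$ over the $E_\mu$, and uniqueness is the standard Kazhdan--Lusztig argument (the paper simply cites \cite[Lemma 24.2.1]{Lu} for this step, whereas you write the induction out, correctly using finiteness of lower Bruhat intervals and the fact that a bar-invariant element of $q^{-1}\Z[q^{-1}]$ vanishes).
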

\begin{proof}
By Lemma \ref{lem:bar invariance of E^{n,0}} and the bar-invariance of $L_\la$, we immediately have $\overline{\bf L_\la}={\bf L}_\la$ and ${\bf L}_\la\in {\bf E}_\la +\sum_{\la\succeq \mu, \la\neq \mu}q^{-1}\Z[q^{-1}]{\bf E}_\mu$. The uniqueness follows from \cite[Lemma 24.2.1]{Lu}.
\end{proof}

Define $\mscr E^{n,\times}_{\Z[q,q^{-1}]}$ and $\mscr E^{n,\times}_\Z$ as in \eqref{eq:Z-form of E}, and put ${\bf E}_\la(1)=1\otimes {\bf E}_\la$, ${\bf L}_\la(1)=1\otimes {\bf L}_\la\in \mscr E^{n,\times}_\Z$ for $\la\in \La^+_{\Z^\times}$.
We can now translate Theorem \ref{thm:same:mult} into Brundan's Fock space language for the Kazhdan-Lusztig theory of $\mf q(n)$ \cite{Br2}.

\begin{thm}\label{thm:main} Let {\rm $\Psi:K\left(\mc O^{\texttt{fin}}_\hZ\right) \rightarrow \mscr E^{n,\times}_\Z$} be the $\Z$-linear isomorphism defined by
\begin{align*}
[E (\la)]\longmapsto {\bf E}_{\la^\sharp}(1) \quad \left(\la\in\La^+_\hZ\right).
\end{align*}
Then $\Psi\left([L(\la)]\right)={\bf L}_{\la^\sharp}(1)$, for all $\la\in\La^+_\hZ$.
\end{thm}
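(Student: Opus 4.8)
The plan is to transport Theorem~\ref{thm:same:mult} and Brundan's Theorem~\ref{thm:Brundan Main} across the projection $\pi\colon\mscr E^n\to\mscr E^{n,\times}$, so the proof is essentially a bookkeeping argument comparing two change-of-basis matrices. First I would record the two expansions we have: on the $\mf q(n)$ side, $[E(\la)]=\sum_{\mu\in\La^+_\hZ}a_{\la\mu}[L(\mu)]$ by \eqref{E:in:L}, while on the combinatorial side $E_\nu=\sum_{\eta\in\La^+_\Z,\,\nu\succeq\eta}\ell_{\eta\nu}$-type relations give, after inverting, an expansion of each $E_\nu$ in terms of $L_\eta$ whose coefficients at $q=1$ are (by Theorem~\ref{thm:Brundan Main}) exactly the integer-weight $a_{\nu\eta}$'s. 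The key point is that $\sharp\colon\La^+_\hZ\to\La^+_{\Z^\times}$ is a bijection, and by Theorem~\ref{thm:same:mult} we have $a_{\la\mu}=a_{\la^\sharp\mu^\sharp}$ for all $\la,\mu\in\La^+_\hZ$; moreover, by Lemma~\ref{lem:partial order non-negativity} and Proposition~\ref{prop:Bruhat PS-B}, whenever $\la^\sharp\succeq\eta$ with $\eta\in\La^+_\Z$, automatically $\eta\in\La^+_{\Z^\times}$, so $\eta=\mu^\sharp$ for a unique $\mu\in\La^+_\hZ$ and in fact $a_{\la^\sharp\eta}=0$ unless $\eta$ has no zero parts. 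This last fact is what makes the quotient $\mscr E^{n,\times}$ the right object: applying $\pi$ kills exactly the $L_\eta$ (equivalently $E_\eta$) with $\eta\notin\La^+_{\Z^\times}$, and those never appear in the $E_{\la^\sharp}$-expansion for $\la\in\La^+_\hZ$ anyway.

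Next I would set up the map. Since $\{[E(\la)]:\la\in\La^+_\hZ\}$ is a $\Z$-basis of $K(\mc O^{\texttt{fin}}_\hZ)$ and $\{{\bf E}_{\la^\sharp}(1):\la\in\La^+_\hZ\}=\{{\bf E}_\nu(1):\nu\in\La^+_{\Z^\times}\}$ is a $\Z$-basis of $\mscr E^{n,\times}_\Z$ (the latter because $\{{\bf E}_\nu\}$ descends from the basis $\{E_\la\}$ of $\mscr E^n$ modulo the span of those indexed outside $\La^+_{\Z^\times}$, and $\mscr E^{n,0}$ is precisely that span by definition), the $\Z$-linear map $\Psi$ in the statement is a well-defined isomorphism. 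It remains to check $\Psi([L(\la)])={\bf L}_{\la^\sharp}(1)$. For this I would compare coordinates in the $E$/${\bf E}$ bases. Write $[L(\mu)]=\sum_{\la}b_{\mu\la}[E(\la)]$ where $(b_{\mu\la})$ is the inverse of $(a_{\la\mu})$ over $\La^+_\hZ$; then $\Psi([L(\mu)])=\sum_\la b_{\mu\la}{\bf E}_{\la^\sharp}(1)$. On the other side, ${\bf L}_{\mu^\sharp}(1)=\sum_{\nu}c_{\mu^\sharp\nu}{\bf E}_\nu(1)$ where $(c_{\nu'\nu})$ is the inverse over $\La^+_{\Z^\times}$ of the matrix expressing ${\bf E}_\nu$ in the ${\bf L}$-basis of $\mscr E^{n,\times}$, evaluated at $q=1$. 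So the whole theorem reduces to the matrix identity: the $q=1$ specialization of $({\bf E}_\nu$-to-${\bf L}_{\nu'}$ matrix over $\La^+_{\Z^\times})$ equals $(a_{\la^\sharp\mu^\sharp})_{\la,\mu\in\La^+_\hZ}=(a_{\la\mu})_{\la,\mu\in\La^+_\hZ}$.

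To finish, I would establish that matrix identity by tracing through the definition of ${\bf E}_\nu$. Apply $\pi$ to \eqref{eq:basis-E}: for $\nu\in\La^+_{\Z^\times}$, ${\bf E}_\nu=\sum_{\eta\in\La^+_\Z,\,\nu\succeq\eta}u_{-w_0\nu,-w_0\eta}(q^{-1}){\bf L}_\eta$, and by Lemma~\ref{lem:partial order non-negativity} (together with Proposition~\ref{prop:Bruhat PS-B}) the surviving $\eta$ all lie in $\La^+_{\Z^\times}$, so this is an expansion purely within $\mscr E^{n,\times}$ with triangular unipotent coefficient matrix; its inverse at $q=1$ gives the ${\bf E}$-coordinates of ${\bf L}_{\nu'}$. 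But this is exactly the restriction to $\La^+_{\Z^\times}$ of the corresponding identity in $\mscr E^n$ from \cite{Br2}, whose $q=1$ specialization, by Theorem~\ref{thm:Brundan Main}, is the integer-weight $(a_{\nu\nu'})$; and by Theorem~\ref{thm:same:mult} that restricted matrix is precisely $(a_{\la\mu})_{\la,\mu\in\La^+_\hZ}$ under $\nu=\la^\sharp$, $\nu'=\mu^\sharp$. Comparing with \eqref{E:in:L} gives $\Psi([L(\la)])={\bf L}_{\la^\sharp}(1)$. I expect the main obstacle to be the careful handling of which $\eta$ survive the projection: one must verify that no $L_\eta$ or $E_\eta$ with $\eta\in\La^+_\Z\setminus\La^+_{\Z^\times}$ ever contributes to either side, so that passing to the quotient loses no information for half-integer weights — this is precisely where Lemma~\ref{lem:partial order non-negativity} and the compatibility of $\sharp$ with the Bruhat order (Lemma~\ref{lem:partial order}, Corollary~\ref{cor:succcurlyeq = succeq}) are essential, and it should be spelled out explicitly rather than taken for granted.
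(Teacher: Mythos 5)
Your overall route is the same as the paper's: expand ${\bf E}_{\la^\sharp}(1)$ once via \eqref{E:in:L} under $\Psi$ and once via \eqref{eq:basis-E} together with Theorem~\ref{thm:Brundan Main}, identify the two coefficient matrices using Theorem~\ref{thm:same:mult}, and conclude that the two candidate dual bases coincide by unitriangularity with respect to the Bruhat order (the paper phrases this as an induction along $\succcurlyeq$ over the finitely many $\mu\preccurlyeq\la$, using Corollary~\ref{cor:succcurlyeq = succeq}; inverting the unitriangular matrix, as you do, is the same argument).

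One of your supporting claims is false, however, and you lean on it several times. You assert that Lemma~\ref{lem:partial order non-negativity} forces every $\eta\in\La^+_\Z$ with $\la^\sharp\succeq\eta$ to lie in $\La^+_{\Z^\times}$, hence that $a_{\la^\sharp\eta}=0$ whenever $\eta$ has a zero part and that such $L_\eta$ ``never appear in the $E_{\la^\sharp}$-expansion anyway.'' That lemma says the opposite: $\La^\geqslant_{\Z^\times}$ is \emph{increasing}, i.e.\ upward closed, so it is the weights \emph{above} an element of $\La^+_{\Z^\times}$ that must have no zero parts, not those below. For instance, for $n=2$ one has $(0,0)\preccurlyeq(1,-1)=(\hf,-\hf)^\sharp$, and $L(0,0)$ does occur in $E(1,-1)$, so terms $L_\eta$ with $\eta\notin\La^+_{\Z^\times}$ genuinely appear in $E_{\la^\sharp}$ before projecting. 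The argument survives for the other reason you mention in passing: the increasing property, applied in its correct (contrapositive) direction, shows that $\mscr E^{n,0}$ equals the span of $\{\,L_\eta\,|\,\eta\in\La^+_\Z\setminus\La^+_{\Z^\times}\,\}$ (this is exactly how the paper proves $\mscr E^{n,0}$ is bar-invariant), so $\pi$ annihilates precisely those ${\bf L}_\eta$ and the projected expansion reads ${\bf E}_{\la^\sharp}(1)=\sum_{\mu\in\La^+_\hZ}a_{\la^\sharp\mu^\sharp}{\bf L}_{\mu^\sharp}(1)$, which is all the comparison needs. (Upward closure of $\La^+_{\Z^\times}$ is then what guarantees that inverting the quotient matrix agrees with restricting the inverse, since any $\nu$ with $\la^\sharp\succeq\nu\succeq\mu^\sharp$ and $\mu^\sharp\in\La^+_{\Z^\times}$ lies in $\La^+_{\Z^\times}$.) So the ``main obstacle'' you flag at the end --- verifying that no $L_\eta$ with a zero part ever contributes --- is the wrong thing to try to verify; such terms do contribute and are simply killed by $\pi$.
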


\begin{proof}
For $\la\in\La^+_\hZ$, let ${\bf L}'_{\la^\sharp}=\Psi\left([L(\la)]\right)$. By \eqref{E:in:L}, we have
${\bf E}_{\la^\sharp}(1)=\sum_{\mu}a_{\la\mu}{\bf L}'_{\mu^\sharp}$.
On the other hand, by \eqref{eq:basis-E} and  Theorem \ref{thm:Brundan Main}, we have
${\bf E}_{\la^\sharp}(1)=\sum_{\la^\sharp\succeq\mu^\sharp}a_{\la^\sharp\mu^\sharp}{\bf L}_{\mu^\sharp}(1)$.
Finally by Theorem \ref{thm:same:mult}, we get
\begin{equation*}
\sum_{\la^\sharp\succeq\mu^\sharp}a_{\la^\sharp\mu^\sharp}{\bf L}_{\mu^\sharp}(1)=\sum_{\la^\sharp\succeq\mu^\sharp}a_{\la^\sharp\mu^\sharp}{\bf L}'_{\mu^\sharp}.
\end{equation*}
Since there are only finitely many $\mu$'s such that $\la\succcurlyeq \mu$, we conclude by Corollary \ref{cor:succcurlyeq = succeq} and induction that ${\bf L}_{\mu^\sharp}(1)={\bf L}'_{\mu^\sharp}$ for all $\mu\in \La^+_{\hf+\Z}$. This completes the proof.
\end{proof}

\begin{rem}
Theorems \ref{thm:same:mult} and \ref{thm:main} suggest a connection between the categories $\mc O^{\texttt{fin}}_\hZ$ and $\mc O^{\texttt{fin}}_\Z$.
\end{rem}

For $\la\in \La$, we denote by $\la^+$ the unique weight in $\La^+$ which is $W$-conjugate to $\la$.
Let $\la=\sum_{i=1}^n\la_i\varepsilon_i\in\La$ be given. For $1\leq i<j\leq n$ with $\la_i+\la_j=0$, define
\begin{equation*}
{\texttt R}_{i,j}(\la):=\la+a(\varepsilon_i-\varepsilon_j),
\end{equation*}
where $a$ is the smallest positive integer such that $\la+a(\varepsilon_i-\varepsilon_j)$ and
all ${\texttt R}_{k,l}(\la)+a(\varepsilon_i-\varepsilon_j)$ for $1\leq k<i<j<l\leq n$ with $\la_k+\la_l=0$ are $W$-conjugate to weights in $\La^+$.

Let $\la\in \La^+_{\Z^\times}$ with the degree of atypicality $r$. Let $1\leq i_1<\ldots<i_r<j_r<\ldots<j_1\leq n$ such that $\la_{i_s}+\la_{j_s}=0$ for $1\leq s\leq r$. Note that $\la_{i_1}>\cdots>\la_{i_r}>0>\la_{j_r}>\cdots>\la_{j_1}$.
Following \cite[Section 3-f]{Br1} we define
\begin{equation}
\begin{split}
{\texttt R}_\theta(\la)  &:= \left( {\texttt R}_{i_1,j_1}^{\theta_1}\circ {\texttt R}_{i_2,j_2}^{\theta_2}\circ\cdots \circ {\texttt R}_{i_r,j_r}^{\theta_r}(\la) \right)^+,\\
{\texttt R}'_\theta(\la) &:= \left( {\texttt R}_{i_r,j_r}^{\theta_r}\circ {\texttt R}_{i_{r-1},j_{r-1}}^{\theta_{r-1}}\circ\cdots \circ {\texttt R}_{i_1,j_1}^{\theta_1}(\la) \right)^+,
\end{split}
\end{equation}
for $\theta=(\theta_1,\ldots,\theta_r)\in \Z_+^r$. We put $|\theta|=\sum_{i=1}\theta_i$ for $\theta=(\theta_1,\ldots,\theta_r)\in \Z_+^r$.

\begin{thm}\label{thm:char for half}
For $\la\in \La^+_{\hf+\Z}$ with the degree of atypicality $r$, we have
\begin{itemize}
\item[(1)] $[E(\la)]= \sum_{\mu} [L(\mu)]$, where the sum is over all $\mu\in \La^+_{\hf+\Z}$ such that {\rm $\la={\texttt R}_\theta(\mu)$} for some unique $\theta\in \{0,1\}^r$,

\item[(2)] $[L(\la)]= \sum_{\mu,\theta}(-1)^{|\theta|}[E(\mu)]$, where the sum is over all $\mu\in \La^+_{\hf+\Z}$ and $\theta\in \Z_+^r$ such that {\rm $\la={\texttt R}'_\theta(\mu)$}.
\end{itemize}
\end{thm}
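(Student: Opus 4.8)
The plan is to deduce both parts from Brundan's closed-form character formula for $\gl(p|q)$ in \cite{Br1}, transported through the chain of identifications assembled in the paper. First I would reduce to a statement inside $\mscr E^{n,\times}_\Z$. By \thmref{thm:main}, $\Psi$ carries $[E(\la)]$ to ${\bf E}_{\la^\sharp}(1)$ and $[L(\la)]$ to ${\bf L}_{\la^\sharp}(1)$, and combining $[E(\la)]=\sum_\mu a_{\la\mu}[L(\mu)]$ from \eqnref{E:in:L} with \thmref{thm:same:mult} ($a_{\la\mu}=a_{\la^\sharp\mu^\sharp}$) turns the claims into assertions about the matrix $(a_{\nu\eta})_{\nu,\eta\in\La^+_{\Z^\times}}$: part (1) says $a_{\nu\eta}$ is $1$ if $\nu={\texttt R}_\theta(\eta)$ for some (necessarily unique) $\theta\in\{0,1\}^r$ and $0$ otherwise, and part (2) says the inverse matrix has $(\nu,\eta)$-entry $\sum_{\theta:\,\nu={\texttt R}'_\theta(\eta)}(-1)^{|\theta|}$. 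Here I use that $\sharp$ is $W$-equivariant, hence commutes with the operators ${\texttt R}_{i,j}$ and with $(\cdot)^+$, that it sends atypical pairs of $\mu$ to atypical pairs of $\mu^\sharp$, and — by \lemref{lem:partial order} — that the minimal integral shift is unchanged, so ${\texttt R}_\theta(\mu)^\sharp={\texttt R}_\theta(\mu^\sharp)$ and ${\texttt R}'_\theta(\mu)^\sharp={\texttt R}'_\theta(\mu^\sharp)$.

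Next I would cut down to a single block. Since $a_{\nu\eta}\neq 0$ forces $\nu\succcurlyeq\eta$ and $\La^\geqslant_{\Z^\times}$ is increasing (\lemref{lem:partial order non-negativity}), the full integer-weight matrix $(a_{\nu\eta})_{\nu,\eta\in\La^+_\Z}$ is block upper triangular for the splitting $\La^+_\Z=\La^+_{\Z^\times}\sqcup(\La^+_\Z\setminus\La^+_{\Z^\times})$, so its $\La^+_{\Z^\times}$-block and the $\La^+_{\Z^\times}$-block of its inverse are mutually inverse; and since $\succcurlyeq$ preserves the number $p$ of positive parts on zero-free weights, these decompose further along $\La^+_{\Z^\times}=\bigsqcup_p\La^+_{\Z^\times}(p)$, so it suffices to treat each $p$ separately. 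By \propref{prop:succeq and succeq'} and \lemref{lem:increasing property}, $\flat$ embeds $(\La^+_{\Z^\times}(p),\succcurlyeq)$ as an increasing subposet of $(\Z^{p|q}_\geqslant,\succeq_a)$, the poset indexing the Fock-space model for finite-dimensional $\gl(p|q)$-modules in \cite{Br1}. I would then show that under $\flat$ the block $(a_{\nu\eta})_{\nu,\eta\in\La^+_{\Z^\times}(p)}$ together with its inverse is identified with the corresponding blocks of Brundan's transition matrix $(a^{\gl}_{fg})$ for $\gl(p|q)$ and of its inverse. Granting the poset identification, this reduces to matching the bar involutions, since by \thmref{thm:main}, \thmref{thm:Brundan Main} and \eqnref{eq:basis-E} the $\mf q(n)$-block is the transition matrix between the bar-invariant bases $\{{\bf E}_\nu\}$ and $\{{\bf L}_\nu\}$ of $\mscr E^{n,\times}_\Z$, which — by the uniqueness characterization of $\{{\bf L}_\nu\}$, i.e.~\cite[Lemma 24.2.1]{Lu} — is determined by the Bruhat order and the bar involution alone, and the same holds on the $\gl(p|q)$ side.

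With the identification in hand, \cite{Br1} supplies the closed forms $a^{\gl}_{fg}=\#\{\theta\in\{0,1\}^r:f={\texttt R}_\theta(g)\}$ and $(a^{\gl})^{-1}_{fg}=\sum_{\theta:\,f={\texttt R}'_\theta(g)}(-1)^{|\theta|}$, stated there via operators ${\texttt R}_\theta,{\texttt R}'_\theta$ on $\Z^{p|q}$. A direct check shows $\flat$ intertwines these with the operators in the theorem: $\flat$ matches the $s$-th atypical pair $(i_s,j_s)$ of $\nu$ with that of $\nu^\flat$, converts the elementary shift $\nu\mapsto\nu+a(\ep_{i_s}-\ep_{j_s})$ into the analogous shift of $\nu^\flat$, and — by \propref{prop:succeq and succeq'} and \lemref{lem:increasing property} — converts ``$W$-conjugate to a weight in $\La^+$'' into ``conjugate to a $\gl(p|q)$-dominant weight'', so the minimal admissible shifts coincide and ${\texttt R}_\theta(\nu)^\flat={\texttt R}_\theta(\nu^\flat)$, ${\texttt R}'_\theta(\nu)^\flat={\texttt R}'_\theta(\nu^\flat)$. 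Transporting the two $\gl(p|q)$ formulas back through $\flat$ and $\sharp$ and then through $\Psi$, and using \corref{cor:succcurlyeq = succeq} to match the ranges of summation, yields (1) and (2).

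The main obstacle is the middle assertion: that the bar involution of $\mscr E^{n,\times}$ agrees, block by block under $\flat$, with the one on the $\gl(p|q)$ Fock space of \cite{Br1}. This is what upgrades the Bruhat-order-preserving correspondence $\flat$ from an isomorphism of posets to one carrying the full Kazhdan–Lusztig data, and it is where the zero-free hypothesis does its work, reconciling the type-$B$ construction of \cite{Br2} — with its weight-$0$ basis vector of $\mscr V$ and the factors $(q+q^{-1})^{z(\la)}$ — with the $\gl$-type construction of \cite{Br1}. Everything else is either formal (the reductions and the block-triangularity bookkeeping) or a finite combinatorial verification (the compatibility of $\flat$ with ${\texttt R}_\theta$, ${\texttt R}'_\theta$ and the matching of atypicality data).
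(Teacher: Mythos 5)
Your reductions---from the Grothendieck group to the matrix $(a_{\nu\eta})_{\nu,\eta\in\La^+_{\Z^\times}}$ via \thmref{thm:main} and \thmref{thm:same:mult}, the block-triangularity coming from \lemref{lem:partial order non-negativity}, the splitting along $p$, and the combinatorial compatibility of $\flat$ and $\sharp$ with ${\texttt R}_\theta$ and ${\texttt R}'_\theta$---all match the paper. The gap sits exactly where you yourself flag ``the main obstacle'': you reduce both parts to the claim that the bar involution on the zero-free block of $\mscr E^{n,\times}$ is carried by $\flat$ to the bar involution on Brundan's $\gl(p|q)$ Fock space, invoke \cite[Lemma 24.2.1]{Lu} to conclude the transition matrices coincide, and then stop. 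That claim is not formal: the two involutions are built from quasi-$R$-matrices of different quantum groups ($U_q(\mf b_\infty)$ versus $U_q(\gl_\infty)$), the comparison would have to hold over $\Z[q,q^{-1}]$ for Lusztig's uniqueness lemma to bite (the theorem itself lives only at $q=1$), and nothing assembled in the paper---which compares the partial orders, never the involutions---delivers it. Identifying the crux and observing that it would imply the result is not a proof of the crux.

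The paper's proof avoids this comparison entirely. For (1) it transports nothing from $\gl(p|q)$: it quotes Brundan's own closed-form evaluation of the type-$B$ canonical basis on zero-free weights, \cite[Theorem 3.36]{Br2}, which directly gives $a_{\alpha\beta}\in\{0,1\}$ with the ${\texttt R}_\theta$ description, and then only needs the easy observation that ${\texttt R}_\theta$ commutes with $\sharp$. For (2) it defines $b_{\alpha\beta}$ combinatorially by the ${\texttt R}'_\theta$-sum and verifies $\sum_\nu a_{\la\nu}b_{\nu\mu}=\delta_{\la\mu}$ by hand: the identity ${\texttt X}_\theta(\gamma)^\flat={\texttt X}_\theta(\gamma^\flat)$ gives $a_{\alpha\beta}=a_{\alpha^\flat\beta^\flat}$ and $b_{\alpha\beta}=b_{\alpha^\flat\beta^\flat}$; \lemref{lem:increasing property} and \propref{prop:succeq and succeq'} show the interval between $\mu^\natural$ and $\la^\natural$ lies entirely in the image of $\flat$, so no summands are gained or lost; and \cite[Corollary 3.36]{Br1}---a purely combinatorial inversion statement about ${\texttt R}_\theta$ and ${\texttt R}'_\theta$ on $\Z^{p|q}$---closes the loop. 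To repair your argument, either prove the bar-involution comparison (a substantially stronger statement than the theorem, and not attempted in the paper) or replace that step by the citation of \cite[Theorem 3.36]{Br2} for part (1) and the explicit matrix-inversion computation for part (2).
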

\begin{proof}
{(1)} By \cite[Theorem 3.36]{Br2}, we have for $\alpha, \beta\in \La^+_{\Z^\times}$
\begin{equation}\label{eq:coeff_a}
a_{\alpha\beta}=
\begin{cases}
1, & \text{if $\alpha={\texttt R}_\theta(\beta)$ for some $\theta\in \{0,1\}^r$},\\
0, & \text{otherwise},
\end{cases}
\end{equation}
where $r$ is the degree of atypicality of $\alpha$.
Note that $a_{\alpha\beta}$ is non-zero only if $\alpha\succeq\beta$ and $\alpha, \beta\in \La^+_{\Z^\times}(p)$ for some $p$.
It is easy to see that $\la={\texttt R}_\theta(\mu)$ if and only if $\la^\sharp={\texttt R}_\theta(\mu^\sharp)$ for $\la, \mu\in \La^+_{\hf+\Z}$. Hence the formula follows from Theorem \ref{thm:main}.

{(2)} For $\alpha, \beta \in\La^+_{\Z^\times}$, let
\begin{equation}\label{eq:coeff_b}
b_{\alpha\beta}=\sum_{\theta}(-1)^{|\theta|},
\end{equation}
where the sum is over all $\theta\in \Z_+^r$ such that $\alpha={\texttt R}'_\theta(\beta)$, and $r$ is the degree of atypicality of $\alpha$ and $\beta$. Note that $b_{\alpha\beta}$ is non-zero only if $\alpha, \beta\in \La^+_{\Z^\times}(p)$ for some $p$.

Recall that for $f, g\in \Z^{p|q}_+$, one can define $a_{fg}$ and $b_{fg}$ as in \eqref{eq:coeff_a} and \eqref{eq:coeff_b}, where $\texttt{R}_\theta(g)$ and $\texttt{R}'_\theta(g)$ are given in \cite[Section 3-f]{Br1}. Since ${\texttt X}_\theta(\gamma^\flat)={\texttt X}_\theta(\gamma)^\flat$ for $\gamma\in \La^+_{\Z^\times}(p)$ and $\theta\in \Z_+^r$ where ${\texttt X}=\texttt{R}$ or $\texttt{R}'$, we have by Lemma \ref{lem:increasing property} $\alpha={\texttt X}_\theta(\beta)$ in $\La^+_{\Z^\times}$ if and only if $\alpha^\flat={\texttt X}_\theta(\beta^\flat)$ in $\Z^{p|q}_+$ for $\alpha, \beta\in \La^+_{\Z^\times}(p)$. This implies that
\begin{equation}\label{eq:a=a circ}
a_{\alpha\beta}=a_{\alpha^\flat \beta^\flat},\quad\quad b_{\alpha\beta}=b_{\alpha^\flat \beta^\flat}.
\end{equation}
By Proposition \ref{prop:succeq and succeq'}, we also have
${\texttt R}'_\theta(\beta) \succeq \beta$
since  ${\texttt R}'_\theta(\beta)^\flat={\texttt R}'_\theta(\beta^\flat) \succeq_a \beta^\flat$ for any $\theta\in \Z_+^r$ \cite[Lemma 2.5]{Br1}.
Hence, $b_{\alpha\beta}$ is non-zero only if $\alpha\succeq \beta$.

For $\nu, \mu\in \La^+_{\hf+\Z}$, $b_{\nu\mu}$ is defined in the same way as in \eqref{eq:coeff_b}. It is clear that $b_{\nu\mu}=b_{\nu^\sharp\mu^\sharp}$ since $\nu={\texttt R}'_\theta(\mu)$ if and only if $\nu^\sharp={\texttt R}'_\theta(\mu^\sharp)$. Now, for $\la, \mu\in \La^+_{\hf+\Z}$
\begin{equation}\label{eq:ab=id-3}
\begin{split}
\sum_{\nu}a_{\la \nu}b_{\nu \mu}
&=\sum_{\nu}a_{\la^\sharp\nu^\sharp}b_{\nu^\sharp\mu^\sharp}
\ \ \ \ \ \ \ \ \ \ \ \ \ \ \  \ \text{by Theorem \ref{thm:same:mult}} \\
&=\sum_{\la^\sharp\succeq \nu^\sharp\succeq\mu^\sharp}a_{\la^\sharp\nu^\sharp}b_{\nu^\sharp\mu^\sharp}
\ \ \ \ \ \ \ \ \ \   \text{by \eqref{eq:coeff_a} and \eqref{eq:coeff_b}},
\end{split}
\end{equation}
where $\nu\in \La^+_{\hf+\Z}$.
The sum in \eqref{eq:ab=id-3} is non-zero only when $\la^\sharp, \mu^\sharp \in \La^+_{\Z^\times}(p)$ for some $p$, and the sum is over $\nu^\sharp\in \La^+_{\Z^\times}(p)$ with $\la^\sharp\succeq \nu^\sharp\succeq\mu^\sharp$. In this case, we have {\allowdisplaybreaks
\begin{equation}\label{eq:ab=id-4}
\begin{split}
\sum_{\la^\sharp\succeq \nu^\sharp\succeq\mu^\sharp}a_{\la^\sharp\nu^\sharp}b_{\nu^\sharp\mu^\sharp}
&=\sum_{\la^\natural\succeq_a \nu^\natural\succeq_a \mu^\natural}a_{\la^\natural \nu^\natural}b_{\nu^\natural \mu^\natural}
\ \ \ \ \ \ \  \text{by \eqref{eq:a=a circ} and Proposition \ref{prop:succeq and succeq'}}\\
&=\sum_{\la^\natural \succeq_a h \succeq_a \mu^\natural} a_{\la^\natural h}b_{h \mu^\natural}
 \ \ \ \ \ \ \ \ \ \   \text{by Lemma \ref{lem:increasing property}} \\
&=\delta_{\la^\natural \mu^\natural}
\ \ \ \ \ \ \ \ \ \ \ \ \ \ \ \ \ \ \ \ \ \ \ \ \ \ \ \text{by \cite[Corollary 3.36]{Br1}},
\end{split}
\end{equation}
where $\nu\in \La^+_{\hf+\Z}$ and $h\in\Z^{p|q}_+$. }
By \eqref{eq:ab=id-3} and \eqref{eq:ab=id-4}, we have
$\sum_{\nu}a_{\la \nu}b_{\nu \mu}=\delta_{\la\mu}$. We conclude that $[L(\nu)]=\sum_{\mu}b_{\nu\mu}[E(\mu)]$. The proof completes.
\end{proof}

\begin{rem}
Let $\la\in \La^+_\hZ$ and let $\mc O^{\texttt{fin}}_{\hZ,\chi_\la}$ denote the subcategory of $\mc O^{\texttt{fin}}_{\hZ}$ of modules of central character $\chi_\la$. Let $p$ and $q$ be determined by $\la$ as before (see, e.g., \eqref{def:p:q}) and let $\mc O^{\texttt{fin}}_{p|q,\chi_\la}$ denote the subcategory of finite-dimensional $\gl(p|q)$-modules of $\gl(p|q)$-central character $\chi_\la$, where $\la$ here is regarded as a $\rho$-shifted weight of $\gl(p|q)$. Theorem \ref{thm:char for half}, together with \cite[Corollary 3.36]{Br1}, seems to indicate a connection between $\mc O^{\texttt{fin}}_{\hZ,\chi_\la}$ and $\mc O^{\texttt{fin}}_{p|q,\chi_\la}$.
\end{rem}

\section{{Kac-Wakimoto type character formulas}}\label{sec:KW:form}

In this section, we derive a closed-form character formula for special classes of finite-dimensional irreducible modules, which are similar to {\em Kac-Wakimoto formula} for classical Lie superalgebras \cite{KW} (see also \cite{CK,CHR,GK,SZ1}). The results in this section are motivated by \cite{SZ1}.

Let $\la=\sum_{i=1}^n\la_i\varepsilon_i\in \La^+_{\hf+\Z}$ with the degree of atypicality $r>0$. Let $1\leq i_1<\ldots<i_r<j_r<\ldots<j_1\leq n$ be unique indices such that $\la_{i_s}+\la_{j_s}=0$ for $1\leq s\leq r$.
We put $S_\la=\{\,\beta_s:=\varepsilon_{i_s}-\varepsilon_{j_s}\,|\,1\leq s\leq r\,\}\subset \Phi^+$, the set of positive roots atypical to $\la$, and define $\la^{\Uparrow}$ to be the weight obtained from $\la$ by replacing $\la_{i_s}$ with $\la_{i_1}$ and replacing $\la_{j_s}$ with $\la_{j_1}$ for $2\leq s \leq n$. Let $\mu, \nu\in \La_{\hf+\Z}$ such that $S_\mu=S_\nu=S_\la$. If $\mu\succcurlyeq \nu$ with $\mu=\nu+\sum_{s=1}^rc_s\beta_s$ for some $c_s\in\Z_+$, then we put $|\mu-\nu|=\sum_{s=1}^rc_s$.

Following \cite{SZ1}, we say that
\begin{itemize}
\item[(1)] $\la$ is {\em totally connected} if for each $1\leq s\leq r-1$ and $\la_{i_s}> t > \la_{i_{s+1}}$, there exists $1\leq i\leq n$ such that $|\la_{i}|=t$,

\item[(2)] $\la$ is {\em totally disconnected} if  for each $1\leq s\leq r-1$, there exists $\la_{i_s} > t > \la_{i_{s+1}}$ such that $|\la_{i}|\neq t$ for any $1\leq i\leq n$.
\end{itemize}
Then we have the following Kac-Wakimoto type character formulas for totally connected and disconnected weights.

\begin{thm}\label{thm:KW formula}
Let $\la\in \La^+_{\hf+\Z}$ with the degree of atypicality $r$.
\begin{itemize}
\item[(1)] If $\la$ is totally connected, then we have
\begin{equation*}
{\rm ch}L(\la) = \frac{(-1)^{|\la^\Uparrow-\la|}2^{\lceil n/2 \rceil}}{r!D}\sum_{w\in W}(-1)^{\ell(w)} w\left(\frac{e^{\la^\Uparrow}}{\prod_{\beta\in S_\la}(1+e^{-\beta})}\right).
\end{equation*}

\item[(2)] If $\la$ is totally disconnected, then we have
\begin{equation*}
{\rm ch}L(\la) = \frac{2^{\lceil n/2 \rceil}}{D}\sum_{w\in W}(-1)^{\ell(w)} w\left(\frac{e^{\la}}{\prod_{\beta\in S_\la}(1+e^{-\beta})}\right).
\end{equation*}
\end{itemize}
\end{thm}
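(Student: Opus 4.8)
The plan is to start from the inversion formula in Theorem \ref{thm:char for half}(2), namely $[L(\la)] = \sum_{\mu,\theta}(-1)^{|\theta|}[E(\mu)]$ where the sum runs over $\mu\in\La^+_{\hf+\Z}$ and $\theta\in\Z_+^r$ with $\la={\texttt R}'_\theta(\mu)$, and then feed in the explicit character of $E(\mu)$ from \eqref{eq:Euler character}. Since $\mf l(\mu)=\mf h$ for half-integer weights, \eqref{eq:Euler character} reads ${\rm ch}E(\mu) = 2^{\lceil n/2\rceil}D^{-1}\sum_{w\in W}(-1)^{\ell(w)}w(e^\mu)$ (note $\ell(\mu)=n$ for $\mu\in\La^+_{\hf+\Z}$, and $\Phi^+(\mu)=\emptyset$). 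Substituting, one gets
\begin{equation*}
{\rm ch}L(\la) = \frac{2^{\lceil n/2\rceil}}{D}\sum_{w\in W}(-1)^{\ell(w)}\,w\!\left(\sum_{\mu,\theta}(-1)^{|\theta|}e^{\mu}\right),
\end{equation*}
so everything reduces to understanding the inner sum $\sum_{\mu,\theta}(-1)^{|\theta|}e^{\mu}$, an identity purely about weights. The key point is that the condition $\la={\texttt R}'_\theta(\mu)$ together with the $\Z_+^r$-worth of $\theta$'s produces, after taking $W$-antisymmetrization, a telescoping/geometric-series collapse. I would first translate the problem via $\sharp$ and $\flat$ to $\gl(p|q)$ using \eqref{eq:a=a circ} and Lemma \ref{lem:increasing property}, so that I may invoke the analogous weight combinatorics already carried out in \cite{SZ1} for the general linear superalgebra; the notions of totally connected and totally disconnected are designed precisely so that the $\flat$-image inherits the corresponding $\gl(p|q)$ property.

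For the totally disconnected case (2) the combinatorics should be the cleanest: the atypical roots $\beta_s$ are ``separated'' by a genuine gap, so the raising operators ${\texttt R}_{i_s,j_s}$ move weights past no other weight of $\la$, the integer $a$ in the definition of ${\texttt R}_{i,j}$ is always $1$, and the map $\theta\mapsto {\texttt R}'_\theta(\mu)$ becomes, up to the $W$-action, a product of $r$ independent geometric series $\sum_{c_s\ge 0}(-1)^{c_s}e^{-c_s\beta_s}$, each summing to $(1+e^{-\beta_s})^{-1}$ after the relevant sign bookkeeping; antisymmetrizing over $W$ and reindexing $\mu=\la$ (the unique minimal term) then yields exactly
\begin{equation*}
{\rm ch}L(\la) = \frac{2^{\lceil n/2\rceil}}{D}\sum_{w\in W}(-1)^{\ell(w)}w\!\left(\frac{e^\la}{\prod_{\beta\in S_\la}(1+e^{-\beta})}\right).
\end{equation*}
For the totally connected case (1), the atypical roots form a single ``chain'' with no gaps, so the ${\texttt R}'_\theta$ orbit is not a free product; instead one expects a collapse where all but one term cancel under $W$-antisymmetrization, leaving only the top weight $\la^\Uparrow$ (the weight with all atypical entries pushed up to $\la_{i_1},\la_{j_1}$) with the sign $(-1)^{|\la^\Uparrow-\la|}$ and a factor $1/r!$ coming from the $r!$ orderings of the nested raising operators that produce the same $W$-orbit representative. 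I would make this precise by the same argument as in \cite{SZ1}, transported through $\flat$.

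The main obstacle I anticipate is the bookkeeping in the totally connected case: showing that the signed sum over $\theta\in\Z_+^r$, after $W$-antisymmetrization, telescopes down to the single term $(-1)^{|\la^\Uparrow-\la|}e^{\la^\Uparrow}/\prod_{\beta\in S_\la}(1+e^{-\beta})$ with the correct combinatorial factor $1/r!$. This requires a careful analysis of which ${\texttt R}'_\theta(\mu)$ actually land in $\La^+_{\hf+\Z}$ (the $+$ in the definition of ${\texttt R}'_\theta$ is doing real work) and of the cancellations among the many $(\mu,\theta)$ pairs giving $W$-conjugate weights; equivalently, it is the content of the corresponding $\gl(p|q)$ statement in \cite{SZ1}, so the honest task is to verify that the dictionary $\la\mapsto\la^\natural$ sends totally connected (resp.\ disconnected) $\mf q(n)$-weights to totally connected (resp.\ disconnected) $\gl(p|q)$-weights compatibly with ${\texttt R}'_\theta$ and the Bruhat order, which is exactly what Corollary \ref{cor:succcurlyeq = succeq}, Proposition \ref{prop:succeq and succeq'}, Lemma \ref{lem:increasing property}, and the identity ${\texttt X}_\theta(\gamma^\flat)={\texttt X}_\theta(\gamma)^\flat$ from the proof of Theorem \ref{thm:char for half} provide. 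Finally I would double-check the overall constants: the $2^{\lceil n/2\rceil}$ is just $2^{\lceil\ell(\mu)/2\rceil}$ with $\ell(\mu)=n$, and the $D$ in the denominator is the same $D=D_{\bar 0}/D_{\bar 1}$ from Proposition \ref{prop:same:central}(4).
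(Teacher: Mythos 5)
Your overall strategy---push everything through the dictionary $\la\mapsto\la^\natural$ to $\gl(p|q)$ and invoke the Kac--Wakimoto type formulas of \cite{SZ1}---is the same as the paper's, and reversing the direction (resumming $[L(\la)]=\sum_{\mu,\theta}(-1)^{|\theta|}[E(\mu)]$ into the closed form, rather than expanding the closed form and matching it against that sum) is harmless. The coefficient identification you need for the ``finite part'' is exactly what the paper carries out by comparing with (4.46) of \cite{SZ1} and using \cite[Corollary 3.39(ii) and (4.40)]{Br1} together with Theorem~\ref{thm:char for half}(2).

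The genuine gap is the step you flag as the main obstacle, and your proposed fix for it does not work. Expanding $\prod_{\beta\in S_\la}(1+e^{-\beta})^{-1}$ turns the right-hand side into an \emph{infinite} alternating sum $\sum_{c\in\Z_+^r}(-1)^{|c|}\sigma(\la^{\Uparrow}-\sum_s c_s\beta_s)$ of antisymmetrized exponentials, whereas $\sum_{\mu,\theta}(-1)^{|\theta|}{\rm ch}E(\mu)$ is a \emph{finite} sum: only the exponents with all $c_s\le\la_{i_1}-\hf$ arise from pairs $(\mu,\theta)$ with $\la={\texttt R}'_\theta(\mu)$. The surplus terms must be shown to vanish under $W$-antisymmetrization; this is the paper's ${\texttt{KW}}_2(\la)=0$ step. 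There, one resums the tail in each offending atypical direction back into a factor $(1+e^{-\beta})^{-1}$ attached to a weight whose $i$th and $j$th entries equal $-\hf$ and $\hf$, clears denominators, and observes that the resulting expression is invariant under the transposition $(i,j)\in W=S_n$, hence is annihilated by $\sum_{w\in W}(-1)^{\ell(w)}w$. Crucially, this transposition exchanges a coordinate with index $\le p$ and one with index $>p$, so it has no counterpart in the Weyl group $S_p\times S_q$ of $\gl(p|q)_{\bar 0}$; the cancellation is therefore \emph{not} ``the content of the corresponding $\gl(p|q)$ statement in \cite{SZ1}'' and cannot be transported through $\flat$---it is a separate, queer-specific computation that your outline omits, in both the connected and the disconnected case. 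Two smaller inaccuracies: in the totally disconnected case the integer $a$ in ${\texttt R}_{i,j}$ need not be $1$ (disconnectedness only guarantees some gap between consecutive atypical values, not a gap immediately above each one), and the factor $1/r!$ is inherited from the Su--Zhang formula rather than derived from counting orderings of the raising operators.
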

\begin{proof} (1)
Let $\la=\sum_{i=1}^n\la_i\varepsilon_i \in \La^+_{\hf+\Z}$ be a totally connected weight.
Let $1\leq i_1<\ldots<i_r<j_r<\ldots<j_1\leq n$ be such that  $\la_{i_s}+\la_{j_s}=0$ for $1\leq s\leq r$, and $S_\la=\{\,\beta_s=\varepsilon_{i_s}-\varepsilon_{j_s}\,|\, 1\leq s\leq r\,\}$.

Let $\texttt{KW}(\la)$ denote the right-hand side of the equation in (1).
For $\mu\in \La_{\hf+\Z}$, put
$\sigma(\mu)=2^{\lceil n/2\rceil}D^{-1}\sum_{w\in W}(-1)^{\ell(w)}w\left(e^\mu \right)$,
which is equal to ${\rm ch} E(\mu)$ by \eqref{eq:Euler character} when $\mu\in \La^+_{\hf+\Z}$. Let
\begin{equation*}
\begin{split}
X&=\la^\Uparrow -\sum_{s=1}^r\Z_+\beta_s,\\
X_1&=\left\{\,\la^\Uparrow-\sum_{s=1}^rc_s\beta_s\in X \ \,\Bigg\vert\,c_s\leq \la_{i_1}-\hf \ \text{for all $1\leq s\leq r$}\,\right\},\\
X_2&=X\setminus X_1.
\end{split}
\end{equation*}
Then we have
\begin{equation}\label{eq:KW}
\texttt{KW}(\la) =  \texttt{KW}_1(\la) + \texttt{KW}_2(\la),
\end{equation}
where for $i=1,2$
\begin{equation*}
\begin{split}
\texttt{KW}_i(\la) = \frac{(-1)^{|\la^\Uparrow-\la|}}{r!}
\sum_{\mu\in X_i}(-1)^{|\la^\Uparrow-\mu|}\sigma(\mu).
\end{split}
\end{equation*}

Suppose that $\la^\sharp\in \La^+_{\Z^\times}(p)$ for some $p$. Put $q=n-p$. Then we may identify $\la$ with a dominant integral weight $\la^\natural\in \Z^{p|q}$ for $\gl(p|q)$ with respect to the standard Borel subalgebra (cf.~\cite{Br1,SZ1}). Note that
\begin{equation}\label{eq:KW_1}
\texttt{KW}_1(\la)=\sum_{\mu\in\La^+_{\hf+\Z}}k_{\la\mu}\sigma(\mu),
\end{equation}
for some $k_{\la\mu}\in \mathbb{Q}$, and $\mu^\sharp\in \La^+_{\Z^\times}(p)$ for $\mu$ such that $k_{\la\mu}\neq 0$.

Now, we use the Kac-Wakimoto type character formula for the irreducible $\gl(p|q)$-module with highest weight $\la^\natural$ \cite[Corollary 4.13]{SZ1}. Indeed, we can check without difficulty that the coefficient $k_{\la\mu}$ in \eqref{eq:KW_1} coincides with the coefficient of the character of Kac module with highest weight $\mu^\natural$ in the Kac-Wakimoto type formula associated to $\la^\natural$ (see the right-hand side of (4.46) in \cite{SZ1}).
Therefore, by \cite[Corollary 3.39(ii) and (4.40)]{Br1} and Theorem \ref{thm:char for half}(2), we conclude that
\begin{equation}\label{eq:KW_1=irr}
{\rm ch}L(\la)=\texttt{KW}_1(\la).
\end{equation}

Next, for $\mu=\sum_{i=1}^n\mu_i\varepsilon_i\in X_2$, define $\mu^{\Uparrow_-}$ to be the weight obtained from $\mu$ by replacing all the negative (resp. positive) $\mu_i$ for $1\leq i\leq p$ (resp. $p<i\leq n$) with $-\hf$ (resp. $\hf$). Put
\begin{equation*}
\begin{split}
\overline{X_2}=\{\,\nu\,|\,\nu=\mu^{\Uparrow_-}\ \text{for some $\mu\in X_2$}\,\},
\end{split}
\end{equation*}
and
\begin{equation*}
I_\nu=\left\{\,\beta\,|\,\beta=\varepsilon_i-\varepsilon_j\in S_\la,\ \nu_i=-\nu_j=-1/2\,\right\},
\end{equation*}
for $\nu\in \overline{X_2}$.
Then we have {\allowdisplaybreaks
\begin{equation*}
\begin{split}
\texttt{KW}_2(\la)&=
\frac{(-1)^{|\la^\Uparrow-\la|}}{r!}
\sum_{\mu\in X_2}(-1)^{|\la^\Uparrow-\mu|}\sigma(\mu)\\
&=\frac{(-1)^{|\la^\Uparrow-\la|}}{r!}\sum_{\nu\in \overline{X_2}}(-1)^{|\la^\Uparrow-\nu|}\sum_{(k_\beta)_{\beta\in I_\nu}\in\Z_+^r}(-1)^{\sum k_\beta}\sigma\left(\nu-\sum_{\beta\in I_\nu}k_\beta\beta\right).
\end{split}
\end{equation*}}
For each $\nu=\sum_{i=1}^n\nu_i\varepsilon_i\in \overline{X_2}$, we have
\begin{equation}\label{eq:alter sum prod form}
\sum_{(k_\beta)_{\beta\in I_\nu}\in\Z_+^r}(-1)^{\sum k_\beta}\sigma\left(\nu-\sum_{\beta\in I_\nu}k_\beta\beta\right)=2^{\lceil n/2\rceil} D^{-1}\sum_{w\in W}(-1)^{\ell(w)}w\left(\frac{e^\nu}{\prod_{\beta\in I_\nu}(1+e^{-\beta})} \right).
\end{equation}
If we put $x_i=e^{\varepsilon_i}$ for $1\leq i\leq n$, then
\begin{equation}\label{eq:prod form}
\begin{split}
&\frac{e^\nu}{\prod_{\beta\in I_\nu}(1+e^{-\beta})}=
\frac{x_1^{\nu_1}\cdots x_n^{\nu_n}}{\prod_{\beta=\varepsilon_i-\varepsilon_j\in I_\nu}(1+x_i^{-1}x_j)}
=\frac{x_1^{\nu'_1}\cdots x_n^{\nu'_n}}{\prod_{\beta=\varepsilon_i-\varepsilon_j\in I_\nu}(x_i+x_j)},
\end{split}
\end{equation}
where $\nu'=\sum_{i=1}^n\nu'_i\varepsilon_i=\nu+ \sum_{\beta=\varepsilon_i-\varepsilon_j\in I_\nu}\varepsilon_i$. Since $\nu'_i=\nu'_j=1/2$ for $\beta=\varepsilon_i-\varepsilon_j\in I_\nu$, the product form in \eqref{eq:prod form} is invariant under the transposition $(i,j)\in W$, which in particular implies that the alternating sum \eqref{eq:alter sum prod form} is zero, and accordingly
\begin{equation}\label{eq:KW_2=0}
\texttt{KW}_2(\la)=0.
\end{equation}
Therefore, the formula follows from \eqref{eq:KW}, \eqref{eq:KW_1=irr}, and \eqref{eq:KW_2=0}.

(2) The proof is almost the same as in (1), where we replace $\la^\Uparrow$ with $\la$, and apply \cite[Corollary 4.15]{SZ1}. We leave the details to the reader.
\end{proof}

\bigskip
\frenchspacing

\end{document}